\documentclass[11pt]{article}

\usepackage[T1]{fontenc}
\usepackage{lmodern}
\usepackage{amsmath,amssymb,amsthm,mathtools}
\usepackage{enumitem}
\usepackage{microtype}
\usepackage[margin=1in]{geometry}
\usepackage[numbers,sort&compress]{natbib}
\usepackage[colorlinks=true,linkcolor=blue,citecolor=blue,urlcolor=blue]{hyperref}

\newtheorem{theorem}{Theorem}[section]
\newtheorem{proposition}[theorem]{Proposition}
\newtheorem{lemma}[theorem]{Lemma}
\newtheorem{corollary}[theorem]{Corollary}
\newtheorem{remark}[theorem]{Remark}

\newcommand{\R}{\mathbb R}
\newcommand{\E}{\mathbb E}
\newcommand{\PP}{\mathbb P}
\newcommand{\cP}{\mathcal P}
\newcommand{\cH}{\mathcal H}
\newcommand{\cD}{\mathcal D}
\newcommand{\eps}{\varepsilon}
\newcommand{\ind}{\mathbf 1}
\newcommand{\normaldist}{\mathrm{N}}

\begin{document}

\title{Two-Sample Inference for Gaussian-Smoothed Wasserstein Costs with Finite Moments}
\author{Jiaping Yang\thanks{School of Mathematical Sciences, Fudan University. Email: \texttt{jpyang22@m.fudan.edu.cn}}
\and Yunxin Zhang}
\date{}
\maketitle

\begin{abstract}
Gaussian smoothing has emerged as an effective technique for reducing the sample complexity of optimal transport. In this paper, we study the two-sample plug-in estimator of the Gaussian-smoothed Wasserstein cost \(T_p^{(\sigma)}(\mu,\nu)=W_p(\mu*\gamma_\sigma,\nu*\gamma_\sigma)^p\) on \(\R^d\).  For fixed smoothing and finite polynomial moments \(M_{q_\mu}(\mu)<\infty\), \(M_{q_\nu}(\nu)<\infty\), with \(q_\mu,q_\nu>p\), we establish upper bounds in probability of order \(\rho_{q_\mu,p,d}(m)+\rho_{q_\nu,p,d}(n)\).  Here \(\rho_{q,p,d}(N)=N^{-(q-p)/(q+d)}\) for \(p<q<d+2p\), \(N^{-1/2}\log N\) at \(q=d+2p\), and \(N^{-1/2}\) for \(q>d+2p\).  This order also holds in expectation under \(q_\mu,q_\nu\ge2p\).  When the smoothed population distance is positive, the cost bound yields this rate for the distance itself.  For \(p>1\) and \(q_\mu,q_\nu>d+2p\), we also derive a first-order expansion, a separated two-sample central limit theorem, and a sample-splitting variance estimator.
\end{abstract}

\noindent\textbf{Keywords:} Gaussian-smoothed Wasserstein distance; finite moments; two-sample inference; optimal transport; central limit theorem.

\noindent\textbf{MSC 2020:} 49Q22; 60F25; 60F05

\section{Introduction}
Wasserstein distances provide a flexible way to compare probability distributions in statistics, probability theory, and machine learning.  Their empirical behavior, however, is strongly affected by dimension.  Even when the underlying laws have moderate moments, empirical Wasserstein distances may converge slowly in high-dimensional settings.  This difficulty has motivated several regularized versions of empirical optimal transport.

One such regularization is to smooth the measures before computing the transport cost.  Convolution with a fixed Gaussian kernel replaces empirical measures by smooth, fully supported laws and gives more regular dual potentials.  These regularity properties reduce the complexity of the empirical processes that arise in plug-in estimation.  Since the Gaussian characteristic function has no zeros, the smoothing map is also injective; hence the smoothed Wasserstein distance still separates distinct probability laws.

For two probability measures \(\mu,\nu\) on \(\R^d\), write \(\mu^\sigma=\mu*\gamma_\sigma\) and \(\nu^\sigma=\nu*\gamma_\sigma\), where \(\gamma_\sigma\) denotes the centered Gaussian law with covariance \(\sigma^2I_d\).  We consider the Gaussian-smoothed cost with fixed bandwidth
\[
T_p^{(\sigma)}(\mu,\nu)=W_p(\mu^\sigma,\nu^\sigma)^p,
\]
where \(W_p\) denotes the \(p\)-Wasserstein distance.  Given independent samples \(X_1,\ldots,X_m\) from \(\mu\) and \(Y_1,\ldots,Y_n\) from \(\nu\), let \(\mu_m=m^{-1}\sum_{i=1}^m\delta_{X_i}\) and
\( \nu_n=n^{-1}\sum_{j=1}^n\delta_{Y_j}\). This paper studies the two-sample plug-in statistic \(T_p^{(\sigma)}(\mu_m,\nu_n)\).  We work primarily with the cost \(W_p^p\), rather than the distance \(W_p\), because the Kantorovich dual formula is linear in the two marginals and is therefore naturally suited to an analysis based on the two empirical processes.  Consequences for the smoothed distance itself are stated below.

For classical empirical Wasserstein distances, the object is usually \(W_p(\mu_N,\mu)\) or its \(p\)th power, and the assumptions range from compact support to finite polynomial moments.  The results in \citep{FournierGuillin2015,WeedBach2019,BobkovLedoux2019} show how the rate depends on the dimension, the transport order, and the available moment order.  These results are sharp in several regimes, but they also display the slow high-dimensional behavior of unsmoothed empirical optimal transport.

Gaussian smoothing changes the object rather than only the proof technique.  The smoothed optimal transport distance and its metric and statistical properties under fixed smoothing were introduced in \citep{GoldfeldGreenewald2020}.  Smoothed empirical measures for several probability metrics were studied in \citep{GoldfeldGreenewaldNilesWeedPolyanskiy2020}, while the smooth \(p\)-Wasserstein formulation, including empirical approximation and testing aspects, was developed in \citep{NietertGoldfeldKato2021}.  Subsequent work has refined different parts of this picture: dependent samples and sub-gamma tails \citep{ZhangChengReeves2021}, small-noise behavior \citep{DingNilesWeed2022}, sharp smooth-cost rates \citep{ManoleNilesWeed2024}, and fixed-smoothing limit distributions \citep{GoldfeldKatoNietertRioux2024}.  Inference for smooth \(1\)-Wasserstein and for regularized optimal transport is developed in \citep{SadhuGoldfeldKato2021,GoldfeldKatoRiouxSadhu2024}.  These papers identify the regularizing effect of smoothing, but their assumptions often involve compactness, sub-Gaussian tails, or empirical-process conditions stronger than finite polynomial moments.

A closely related recent contribution treats the one-sample discrepancy between a Gaussian-smoothed empirical measure and the corresponding smoothed population measure \citep{CossoMartiniPerelli2025}.  Its main results give mean convergence rates for Gaussian-smoothed Wasserstein distances under finite moment assumptions, with rate regimes closely related to the threshold \(q=d+2p\) used below.  The present paper differs in three concrete respects.  First, we work with the two-sample plug-in cost \(T_p^{(\sigma)}(\mu_m,\nu_n)\), which gives sharper distance consequences at separated alternatives. Second, the proof strategy is also different: rather than relying on one-sample metric inequalities and dyadic partitioning arguments, our analysis uses the Kantorovich dual formulation and entropy bounds for smoothed dual-potential classes.  Third, beyond rates, we prove a separated first-order expansion, a two-sample CLT, and a sample-splitting variance estimator. These conclusions are not direct consequences of existing smooth-Wasserstein limit theorems, which either treat one-sample approximation or impose stronger smooth empirical-process assumptions.

The first main result is a finite-moment rate bound.  If \(M_{q_\mu}(\mu)<\infty\) and \(M_{q_\nu}(\nu)<\infty\), with \(q_\mu,q_\nu>p\), then
\[
\bigl|T_p^{(\sigma)}(\mu_m,\nu_n)-T_p^{(\sigma)}(\mu,\nu)\bigr|
=O_{\PP}\{\rho_{q_\mu,p,d}(m)+\rho_{q_\nu,p,d}(n)\},
\]
where
\[
\rho_{q,p,d}(N)=
\begin{cases}
N^{-(q-p)/(q+d)},& p<q<d+2p,\\
N^{-1/2}\log N,& q=d+2p,\\
N^{-1/2},& q>d+2p .
\end{cases}
\]
This order also holds in expectation when \(q_\mu,q_\nu\ge2p\).  If the smoothed population distance is positive, the cost bound transfers to the distance without the \(1/p\)-power loss that would arise from applying the triangle inequality at the origin.  A simple testing consequence follows from this rate estimate.

The second main result concerns fixed alternatives.  When \(p>1\) and \(q_\mu,q_\nu>d+2p\), the shell entropy integral is finite.  Together with uniqueness and \(L_2\)-stability of normalized smoothed Kantorovich potentials, this yields a first-order expansion, a two-sample CLT for the smoothed cost, the corresponding distance CLT, and a sample-splitting variance estimator.  We do not address the null limit theorem, because the derivative at \(\mu=\nu\) is nonlinear and requires a different empirical-process analysis.

The final section records two extensions under sufficient conditions, to fixed smoothing kernels and to translation costs with polynomial-growth dual envelopes.  These extensions are included to indicate which parts of the rate proof are specific to the Gaussian kernel and the power cost.  The null case, \(p=1\), non-strictly convex costs, and vanishing smoothing remain outside the separated theory developed here.

The remainder of this paper is organized as follows.  Section~\ref{sec:bac} collects the dual formulation, growth bounds, and the shell bracketing estimate.  Section~\ref{sec:main} proves the rate and testing results.  Section~\ref{sec:sep} establishes the separated expansion, central limit theorem, and variance estimation.  Section~\ref{sec:extensions} discusses extensions and limitations.

\section{Background}
\label{sec:bac}
\subsection{Notation and dual preliminaries}
We adopt the following notation and conventions throughout the paper. For a probability law \(\mu\) on \(\R^d\), define
\[
M_r(\mu)=\int_{\R^d}(1+|x|^r)\mu(dx),
\qquad
\cP_r(\R^d)=\{\mu:M_r(\mu)<\infty\},
\]
and use the shorthand \(\mu f=\int f\,d\mu\).  For probability measures \(\mu,\nu\), let \(\Pi(\mu,\nu)\) denote the set of all couplings of \(\mu\) and \(\nu\).  When \(\mu,\nu\in\cP_p(\R^d)\), the \(p\)-Wasserstein distance is
\[
W_p(\mu,\nu)=
\left(\inf_{\pi\in\Pi(\mu,\nu)}
\int_{\R^d\times\R^d}|x-y|^p\,\pi(dx,dy)\right)^{1/p}.
\]
If \(\mu_N\) appears in a bound for empirical processes, it stands for the empirical measure of \(N\) independent observations drawn from \(\mu\).  Throughout, \(\gamma_\sigma=N(0,\sigma^2I_d)\) denotes the centered Gaussian law with covariance \(\sigma^2I_d\), \(\mu^\sigma=\mu*\gamma_\sigma\) denotes the Gaussian convolution of \(\mu\), and
\[
T_p^{(\sigma)}(\mu,\nu)=W_p(\mu^\sigma,\nu^\sigma)^p .
\]
We also write
\[
D_p^{(\sigma)}(\mu,\nu)=W_p(\mu^\sigma,\nu^\sigma)
\]
for the corresponding Gaussian-smoothed Wasserstein distance.
The convergence rate function is given by
\[
\rho_{q,p,d}(N)=
\begin{cases}
N^{-(q-p)/(q+d)},& p<q<d+2p,\\[0.2em]
N^{-1/2}\log N,& q=d+2p,\\[0.2em]
N^{-1/2},& q>d+2p.
\end{cases}
\]
We denote by \(C\) a generic positive constant that may vary from line to line but is always independent of the sample sizes. Subscripts, such as in \(C_M\) or \(C_\eta\), indicate dependence on specific parameters (e.g., \(M, \eta, \Delta_\sigma\)). Dependence on fixed parameters such as \(p,d,\sigma\) and moment bounds is suppressed unless it is explicitly discussed. All probability bounds hold in the sense of outer probability. The statistics \(T_p^{(\sigma)}(\mu_m,\nu_n)\) and \(W_p(\mu_m^\sigma,\nu_n^\sigma)\) themselves are measurable.

The Kantorovich dual formula for \(c_p(x,y)=|x-y|^p\) reads
\[
W_p(\mu,\nu)^p=
\sup_{\phi+\psi\le |x-y|^p}\left\{\int\phi\,d\mu+\int\psi\,d\nu\right\}.
\]
For a general cost \(c\), a feasible pair \((\phi,\psi)\) is called \(c\)-conjugate if
\[
\phi(x)=\inf_y\{c(x,y)-\psi(y)\},\qquad
\psi(y)=\inf_x\{c(x,y)-\phi(x)\}.
\]
For the power cost we write \(c_p(x,y)=|x-y|^p\).  Let \(\cD(\mu,\nu)\) denote a nonempty selected collection of normalized optimal dual pairs for \((\mu,\nu)\). In all bounds for empirical processes, these selections are taken from the representatives with polynomial growth guaranteed by Lemma~\ref{lem:growth}.  The precise additive normalization is immaterial, since empirical processes remain invariant under adding constants to the two potentials in opposite directions.

\begin{lemma}\label{lem:twosample-dual}
Let \(\mu,\nu,\widetilde\mu,\widetilde\nu\in\cP_p(\R^d)\).  Then
\[
\begin{aligned}
&\bigl|W_p(\widetilde\mu,\widetilde\nu)^p-W_p(\mu,\nu)^p\bigr| \\
&\quad\le
\sup_{(\phi,\psi)\in\cD(\mu,\nu)\cup\cD(\widetilde\mu,\widetilde\nu)}
\left|\int\phi\,d(\widetilde\mu-\mu)+\int\psi\,d(\widetilde\nu-\nu)\right| .
\end{aligned}
\]
With \((\widetilde\mu,\widetilde\nu)=(\mu_m^\sigma,\nu_n^\sigma)\) and \((\mu,\nu)=(\mu^\sigma,\nu^\sigma)\), this reduces the empirical smoothed cost error to the sum of two empirical processes over the convolved potential classes \(\phi*\gamma_\sigma\) and \(\psi*\gamma_\sigma\).
\end{lemma}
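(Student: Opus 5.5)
The plan is the standard two-sided duality sandwich. First I will check that optimal dual pairs exist and are integrable. For \(\mu,\nu\in\cP_p(\R^d)\) and the continuous cost \(c_p\) with \(c_p(x,y)\le 2^{p-1}(|x|^p+|y|^p)\), Kantorovich duality holds with attainment by a \(c_p\)-conjugate pair \((\phi,\psi)\). I will take the representatives with polynomial growth from Lemma~\ref{lem:growth}, so that \(|\phi(x)|\le C(1+|x|^p)\) and \(|\psi(y)|\le C(1+|y|^p)\). These bounds make \(\phi\) and \(\psi\) integrable against every measure in \(\cP_p(\R^d)\), so each integral in the statement is finite. The same holds for pairs in \(\cD(\widetilde\mu,\widetilde\nu)\).

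Next come the two inequalities. Fix \((\phi,\psi)\in\cD(\mu,\nu)\). It is feasible, meaning \(\phi(x)+\psi(y)\le|x-y|^p\), and this constraint does not depend on the marginals. The dual formula for \((\widetilde\mu,\widetilde\nu)\) therefore gives
\[
W_p(\widetilde\mu,\widetilde\nu)^p\ge \int\phi\,d\widetilde\mu+\int\psi\,d\widetilde\nu .
\]
Optimality for \((\mu,\nu)\) gives \(W_p(\mu,\nu)^p=\int\phi\,d\mu+\int\psi\,d\nu\). Subtracting the two,
\[
W_p(\widetilde\mu,\widetilde\nu)^p-W_p(\mu,\nu)^p\ge \int\phi\,d(\widetilde\mu-\mu)+\int\psi\,d(\widetilde\nu-\nu),
\]
and the right-hand side is at least minus the supremum in the statement. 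For the other direction, fix \((\widetilde\phi,\widetilde\psi)\in\cD(\widetilde\mu,\widetilde\nu)\) and use it as a feasible pair for \((\mu,\nu)\). The same argument gives
\[
W_p(\mu,\nu)^p-W_p(\widetilde\mu,\widetilde\nu)^p\ge -\Bigl[\int\widetilde\phi\,d(\widetilde\mu-\mu)+\int\widetilde\psi\,d(\widetilde\nu-\nu)\Bigr].
\]
Combining the two bounds yields the absolute-value inequality over the union of the two dual classes. Changing the normalization \((\phi+a,\psi-a)\) leaves each integrand unchanged, because \(\widetilde\mu-\mu\) and \(\widetilde\nu-\nu\) have total mass zero.

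Finally, for the reduction in the statement, set \(\widetilde\mu=\mu_m^\sigma\) and \(\mu\) equal to \(\mu^\sigma\) (and likewise for \(\nu\)). Fubini gives
\[
\int\phi\,d(\mu_m^\sigma-\mu^\sigma)=\int(\phi*\gamma_\sigma)\,d(\mu_m-\mu),
\]
using the symmetry of \(\gamma_\sigma\). The interchange is justified because \(\int\!\!\int|\phi(x+z)|\gamma_\sigma(dz)\mu(dx)\le C\int\!\!\int(1+|x|^p+|z|^p)\gamma_\sigma(dz)\mu(dx)<\infty\), by the growth bound and the finite Gaussian moments. The same computation applies to \(\psi\).

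The main obstacle is the integrability and attainment step for the empirical smoothed pair \((\mu_m^\sigma,\nu_n^\sigma)\), and more precisely that the selected representatives have growth constants that are finite and controlled. For the inequality itself, finiteness is all that is needed, and this follows from the general existence of \(c\)-concave maximizers for \(\cP_p\) marginals together with Lemma~\ref{lem:growth}.
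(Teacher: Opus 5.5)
Your proposal is correct and follows the paper's own argument. In both, a dual optimizer of each marginal pair serves as a feasible competitor for the other pair, giving the two one-sided bounds, and the Gaussian convolution is then moved onto the potentials. Your integrability and Fubini justifications via the polynomial envelopes of Lemma~\ref{lem:growth} are details the paper leaves implicit.
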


\begin{proof}
Let \((\widetilde\phi,\widetilde\psi)\in\cD(\widetilde\mu,\widetilde\nu)\).  Since \((\widetilde\phi,\widetilde\psi)\) is feasible for the pair \((\mu,\nu)\), we have
\[
W_p(\widetilde\mu,\widetilde\nu)^p-W_p(\mu,\nu)^p
\le \int\widetilde\phi\,d(\widetilde\mu-\mu)+\int\widetilde\psi\,d(\widetilde\nu-\nu).
\]
Conversely, if \((\phi,\psi)\in\cD(\mu,\nu)\), then feasibility for \((\widetilde\mu,\widetilde\nu)\) gives
\[
W_p(\mu,\nu)^p-W_p(\widetilde\mu,\widetilde\nu)^p
\le -\int\phi\,d(\widetilde\mu-\mu)-\int\psi\,d(\widetilde\nu-\nu).
\]
Taking absolute values and the supremum over the two sets of dual optimizers yields the first claim.  The smoothed form follows from \(\int\phi\,d\mu_m^\sigma=\int(\phi*\gamma_\sigma)\,d\mu_m\), and analogously for the other terms.
\end{proof}

\begin{lemma}\label{lem:growth}
Let \(p\ge1\) and \(\mu,\nu\in\cP_p(\R^d)\).  There exists an optimal dual pair \((\phi,\psi)\in\cD(\mu,\nu)\) satisfying
\[
|\phi(x)|+|\psi(y)|
\le C_p\{1+M_p(\mu)+M_p(\nu)+|x|^p+|y|^p\},
\qquad x,y\in\R^d .
\]
\end{lemma}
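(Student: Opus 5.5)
Starting from any optimal dual pair, we will replace it by its \(c_p\)-conjugate version, fix the additive constant, and then read off upper and lower growth bounds directly from the \(c\)-transform formulas. Kantorovich duality for \(c_p\) with \(\mu,\nu\in\cP_p(\R^d)\) gives an optimal pair \((\phi_0,\psi_0)\) in \(L^1(\mu)\times L^1(\nu)\). Double \(c\)-convexification, \(\phi=\psi_0^{c}\) and then \(\psi=\phi^{c}\), keeps feasibility and does not decrease the dual value, so \((\phi,\psi)\) is optimal and \(c_p\)-conjugate:
\[
\phi(x)=\inf_y\{|x-y|^p-\psi(y)\},\qquad \psi(y)=\inf_x\{|x-y|^p-\phi(x)\}.
\]
Since adding \(a\) to \(\phi\) and \(-a\) to \(\psi\) preserves everything, we can normalize so that \(\phi(x_0)=0\) at a suitable anchor point \(x_0\). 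We will pick \(x_0\) with \(|x_0|^p\le C M_p(\mu)\), which is possible because \(\mu\{|x|^p\le 2M_p(\mu)\}\ge 1/2\) by Markov; we will also need \(x_0\) to lie in a set where \(\phi\) is well controlled.

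\textbf{Upper bounds.} From conjugacy, \(\psi(y)\le |x_0-y|^p-\phi(x_0)=|x_0-y|^p\le 2^{p-1}(|x_0|^p+|y|^p)\). For \(\phi\), we pick \(y_0\) with \(\psi(y_0)\) not too negative. Optimality gives \(\int\phi\,d\mu+\int\psi\,d\nu=W_p^p\ge 0\), and combined with the upper bound on \(\psi\) this yields \(\int\phi\,d\mu\ge -C(M_p(\mu)+M_p(\nu))\). We then use \(\phi(x)\le |x-y|^p-\psi(y)\) averaged over \(y\sim\nu\):
\[
\phi(x)\le \int|x-y|^p\,\nu(dy)-\int\psi\,d\nu\le C(|x|^p+M_p(\nu))-\int\psi\,d\nu .
\]
It remains to bound \(-\int\psi\,d\nu\) from above. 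This follows from \(\int\psi\,d\nu=W_p^p-\int\phi\,d\mu\) together with an upper bound on \(\int\phi\,d\mu\), obtained by integrating the same inequality in \(x\). To avoid circularity, we integrate the conjugacy inequality \(\phi(x)+\psi(y)\le|x-y|^p\) against \(\mu\otimes\nu\), which gives \(\int\phi\,d\mu+\int\psi\,d\nu\le C(M_p(\mu)+M_p(\nu))\). The normalization \(\phi(x_0)=0\) then pins the split between the two integrals.

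\textbf{Lower bounds.} Conjugacy gives \(\phi(x)=\inf_y\{|x-y|^p-\psi(y)\}\ge \inf_y\{|x-y|^p-C(1+M+|y|^p)\}\), and this is not bounded below for \(p\ge1\). So the lower bound has to come from elsewhere, and it is the main obstacle. We will first try a symmetric argument: \(\phi(x)\ge \inf_y\{|x-y|^p-\psi(y)\}\), and we bound \(\psi(y)\le |x'-y|^p-\phi(x')\) for \(x'\) in a set \(A\) of \(\mu\)-mass at least \(1/2\) on which \(\phi\ge -K\) and \(|x'|^p\le CM_p(\mu)\). Such a set exists by Markov applied to the integrable function \(\phi^-\), whose integral we control from the bounds already obtained. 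This gives
\[
\phi(x)\ge \inf_y\{|x-y|^p-|x'-y|^p\}-K .
\]
The difference \(|x-y|^p-|x'-y|^p\) is still unbounded below in \(y\) for \(p>1\), so we will localize. Optimal plans are supported on the \(c\)-subdifferential, so on the support the relevant \(y\) satisfy \(|x-y|\lesssim |x|+\text{const}\); more concretely, the infimum defining \(\phi(x)\) can be restricted to \(y\) where \(|x-y|^p\le \phi(x)+\psi(y)+\text{slack}\). Combining this with the upper bounds will force \(|y|\le C(1+|x|)\), so that \(\phi(x)\ge -C(1+M+|x|^p)\). The same argument with the roles exchanged bounds \(\psi\) from below. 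If the localization fails in general, we fall back on redefining the potentials off the supports, or truncating the infimum to a ball of radius \(C(1+|x|)\). This preserves feasibility and optimality \(\mu\)- and \(\nu\)-almost surely, which is all that \(\cD(\mu,\nu)\) requires for the empirical-process bounds of Lemma~\ref{lem:twosample-dual}.
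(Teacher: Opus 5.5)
\textbf{The lower bound is a genuine gap, and no argument can close it.}

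Your upper-bound half can be repaired, although the step where ``the normalization $\phi(x_0)=0$ pins the split'' is circular as written. Integrating $\phi(x)+\psi(y)\le|x-y|^p$ against $\mu\otimes\nu$ bounds only the sum of the two integrals, not $\int\phi\,d\mu$ alone. The clean version is as follows. Take an optimal plan $\pi$; equality $\phi(x)+\psi(y)=|x-y|^p$ holds $\pi$-a.s. By Markov, you can pick $(x_0,y_0)$ with this equality and $|x_0|^p+|y_0|^p\le2\{M_p(\mu)+M_p(\nu)\}$. Normalize $\phi(x_0)=0$ and read off $\psi(y)\le|x_0-y|^p$ and $\phi(x)\le|x-y_0|^p$.

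You correctly flag the lower bound as the main obstacle, but you do not overcome it. Your localization claim, that the $c$-subdifferential forces $|y|\le C(1+|x|)$, is false because optimal maps can grow superlinearly. Truncating the infimum to a ball gives $\tilde\phi\ge\phi$, and either $\tilde\phi$ coincides with $\phi$ (nothing gained) or $\tilde\phi(x)+\psi(y)>|x-y|^p$ for some excluded $y$ (feasibility lost). Redefining the potentials off the supports does nothing when $\mu$ has full support, which is exactly the smoothed setting in which the lemma is used.

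In fact the two-sided bound in the statement is false for $p>1$. Take $d=1$, $p=2$, $\mu=\normaldist(0,1)$, $T(x)=4x^3$ and $\nu=T_\#\mu$; both laws have all moments. Let $f(x)=x^4$ with convex conjugate $f^*$, and set $\phi(x)=x^2-2x^4$, $\psi(y)=y^2-2f^*(y)$. Then $\phi(x)+\psi(y)=(x-y)^2-2\{f(x)+f^*(y)-xy\}\le(x-y)^2$, with equality on the graph of $T$, so this pair is optimal. By uniqueness of potentials for a source with positive density on $\R$ (Lemma~\ref{lem:source-connected-uniqueness}), every optimal first potential equals $x^2-2x^4$ up to a constant $\mu$-a.e. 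Hence no optimal pair satisfies $|\phi(x)|\le C(1+|x|^2)$, and analogous monotone examples work for every $p>1$.

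The paper's proof has the same hole. It attributes to \citep[Theorem~5.10]{Villani2009} the estimates $|u|\le a+C_0$ and $|v|\le b+C_0$. That construction only yields the one-sided bounds $u\le a+C_0$ and $v\le b+C_0$, which are exactly your upper bounds. The negative parts are controlled only in $L_1$, through equality on the support of an optimal plan. The two-sided envelope does hold for $p=1$, where $c$-conjugate potentials are $1$-Lipschitz and $\psi=-\phi$.

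Smoothing does not rescue the case $p>1$ either. For $p=2$, take $\mu=\delta_0$ and $\nu$ with density proportional to $(1+|y|)^{-q-2}$ on $\R$, with $q>2$. The monotone map from $\gamma_\sigma$ to $\nu^\sigma$ is at least a constant times $\exp\{x^2/(2\sigma^2(q+1))\}$ for large $x$. Consequently the (unique) first potential for $(\mu^\sigma,\nu^\sigma)$ tends to $-\infty$ faster than any polynomial.
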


\begin{proof}
We use the following dominated-duality consequence from \citep[Theorem~5.10(ii)]{Villani2009}.  Let \(c\) be a lower semicontinuous finite cost satisfying
\[
c(x,y)\le a(x)+b(y),\quad \text{where} \quad a\in L_1(\mu),\quad b\in L_1(\nu),
\]
then the dual supremum is attained by a \(c\)-conjugate pair.  The construction in the cited theorem, followed by one additive shift \(u\mapsto u-t\), \(v\mapsto v+t\), gives normalized representatives satisfying
\[
|u(x)|\le a(x)+C_0,\qquad |v(y)|\le b(y)+C_0,
\]
where \(C_0\) is bounded by a constant multiple of
\[
1+\int a\,d\mu+\int b\,d\nu+\inf_{\pi\in\Pi(\mu,\nu)}\int c\,d\pi .
\]
This is the only property of the external duality theorem used below; the additive shift is harmless because the dual value and all empirical-process increments are invariant under opposite shifts of the two potentials.

For \(c_p(x,y)=|x-y|^p\), we may take
\[
a(x)=C_p(1+|x|^p),\qquad b(y)=C_p(1+|y|^p),
\]
since \(0\le c_p(x,y)\le a(x)+b(y)\) with \(a\in L_1(\mu)\) and \(b\in L_1(\nu)\).  In addition, 
\[
W_p(\mu,\nu)^p\le C_p\{M_p(\mu)+M_p(\nu)\}.
\]
Applying the dominated-duality bound yields an optimal pair \((\phi,\psi)\) satisfying
\[
|\phi(x)|+|\psi(y)|
\le C_p\{1+M_p(\mu)+M_p(\nu)+|x|^p+|y|^p\}.
\]
Henceforth, \(\cD(\mu,\nu)\) is always understood to be a nonempty selection of such normalized representatives.
\end{proof}

\begin{lemma}\label{lem:gaussian-smooth}
Let \(f\) be measurable and satisfy \(|f(y)|\le L(1+|y|^p)\) for some \(L > 0\). Then, for every integer \(k\ge0\),
\[
\max_{|a|\le k}|D^a(f*\gamma_\sigma)(x)|
\le C_{p,d,k,\sigma}L(1+|x|^p),
\qquad x\in\R^d .
\]
In particular, on the dyadic shell \(A_j=\{2^{j-1}<|x|\le2^j\}\), with \(A_0=B_1\) and \(r_j=2^j\vee1\), the preceding derivative bound reduces to \(C_{p,d,k,\sigma}Lr_j^p\).
\end{lemma}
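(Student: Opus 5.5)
The plan is to move all derivatives onto the Gaussian density and then bound the resulting integral with the polynomial growth of \(f\). Write \(g_\sigma(z)=(2\pi\sigma^2)^{-d/2}e^{-|z|^2/(2\sigma^2)}\), so that \((f*\gamma_\sigma)(x)=\int f(y)g_\sigma(x-y)\,dy\). The first step is to justify differentiating under the integral sign:
\[
D^a(f*\gamma_\sigma)(x)=\int f(y)\,(D^ag_\sigma)(x-y)\,dy .
\]
Every derivative of \(g_\sigma\) has the form \(D^ag_\sigma(z)=P_{a,\sigma}(z)g_\sigma(z)\), where \(P_{a,\sigma}\) is a Hermite-type polynomial of degree \(|a|\). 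On a neighbourhood \(|x-x_0|\le1\) we then have
\[
|f(y)|\,|D^ag_\sigma(x-y)|\le L(1+|y|^p)\,C(1+|x-y|^{k})\,e^{-|x-y|^2/(2\sigma^2)} .
\]
For \(|x-x_0|\le 1\) the Gaussian factor is bounded by \(Ce^{-|x_0-y|^2/(4\sigma^2)}\) times a constant depending only on \(\sigma\). This gives an integrable dominating function independent of \(x\), and dominated convergence yields differentiability of all orders together with the displayed formula.

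The second step is the quantitative bound. Substitute \(y=x-z\) and use \(|x-z|^p\le 2^{(p-1)\vee0}(|x|^p+|z|^p)\) to get
\[
|D^a(f*\gamma_\sigma)(x)|\le L\int (1+|x-z|^p)\,|P_{a,\sigma}(z)|\,g_\sigma(z)\,dz
\le C L\Bigl\{(1+|x|^p)\int|P_{a,\sigma}|g_\sigma+\int|z|^p|P_{a,\sigma}(z)|g_\sigma(z)\,dz\Bigr\}.
\]
Both integrals are finite Gaussian moments depending only on \(p,d,k,\sigma\). Since \(1\le 1+|x|^p\), the second term is also absorbed into \(C(1+|x|^p)\). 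Taking the maximum over the finitely many multi-indices with \(|a|\le k\) gives the constant \(C_{p,d,k,\sigma}\).

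The shell statement then follows at once. On \(A_j\) we have \(|x|\le 2^j\le r_j\), and \(r_j\ge1\), so \(1+|x|^p\le 2r_j^p\); the factor \(2\) is absorbed into the constant. The only step requiring any care is the justification of differentiation under the integral, that is, producing the locally uniform dominating function. This is routine given the Gaussian tail against polynomial growth. I would handle it by the neighbourhood bound above, or alternatively by induction on \(|a|\) using the mean value theorem.
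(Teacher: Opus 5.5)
Your proposal is correct and takes the same approach as the paper: move the derivatives onto the Gaussian density, write \(D^a\) of the density as a polynomial times the Gaussian, and split \(|y|^p\le C_p(|x|^p+|x-y|^p)\) so that only finite Gaussian moments remain. Your dominated-convergence justification for differentiating under the integral sign, which the paper takes for granted, is sound; the only detail left implicit is that \(1+|x-y|^k\le C(1+|x_0-y|^k)\) when \(|x-x_0|\le1\). Your shell reduction \(1+|x|^p\le 2r_j^p\) is also fine.
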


\begin{proof}
For \(|a|\ge1\), we write \(D^a(f*\gamma_\sigma)(x)=\int f(y)D^a\varphi_\sigma(x-y)\,dy\), where \(\varphi_\sigma\) denotes the Gaussian density.  Since the derivative \(D^a\varphi_\sigma\) is a polynomial multiplied by a Gaussian density, the inequality \(|y|^p\le C_p(|x|^p+|x-y|^p)\) yields the asserted bound.  The case \(a=0\) is obtained by the analogous estimate without differentiating the kernel.
\end{proof}

\subsection{A shell bracketing estimate}
Let \(\mu\) be a probability measure with \(M_q(\mu)<\infty\) for some \(q>p\), and fix \(s=\lfloor d/2\rfloor+1.\) For a fixed constant \(L\ge1\), let \(\cH_L\) denote any class of \(C^s\) functions satisfying, on every dyadic shell \(A_j\),
\[
\max_{|a|\le s}\sup_{x\in A_j}|D^a h(x)|\le Lr_j^p .
\]
The \(C^s\)-entropy estimate on bounded subsets of \(\R^d\), combined with the shell decomposition, yields the following maximal inequality for deterministic function classes.  These classes may be replaced by pointwise measurable separable versions, or the bounds may be read in outer expectation.

\begin{lemma}\label{lem:shell-bound}
If \(M_q(\mu)<\infty\) for some \(q>p\), then
\[
\E\sup_{h\in\cH_L}|(\mu_N-\mu)h|
\le C L M_q(\mu)\rho_{q,p,d}(N).
\]
If in addition \(q\ge2p\), then
\[
\left(\E\left[\sup_{h\in\cH_L}|(\mu_N-\mu)h|^2\right]\right)^{1/2}
\le C L M_q(\mu)\rho_{q,p,d}(N).
\]
\end{lemma}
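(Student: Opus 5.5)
We decompose each $h\in\cH_L$ along the dyadic shells, $h=\sum_{j\ge0}h\ind_{A_j}$, so that
\[
\sup_{h\in\cH_L}|(\mu_N-\mu)h|\le\sum_{j\ge0}\sup_{h\in\cH_L}|(\mu_N-\mu)(h\ind_{A_j})|.
\]
Fix a truncation level $J$, to be chosen as a function of $N$, and treat shells $j\le J$ and $j>J$ separately.

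\textbf{Tail shells $j>J$.} We use only the envelope. On $A_j$ we have $|h|\le Lr_j^p$, and Markov's inequality gives $\mu(A_j)\le M_q(\mu)2^{-(j-1)q}$. Hence
\[
\E\sup_h|(\mu_N-\mu)(h\ind_{A_j})|\le 2L r_j^p\mu(A_j)\le CLM_q(\mu)2^{-j(q-p)}.
\]
Summing over $j>J$ gives $CLM_q(\mu)2^{-J(q-p)}$.

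\textbf{Head shells $j\le J$.} On each shell we rescale $x=2^jz$, so that $z$ lies in a fixed annulus. The rescaled functions $z\mapsto h(2^jz)/(Lr_j^p)$ have $C^s$ norm at most $C2^{js}$, since each derivative brings a factor $2^j$. A cleaner route is to bound the bracketing entropy of $\{h\ind_{A_j}\}$ directly. By the standard $C^s$ entropy bound on a set of diameter $\sim2^j$ with derivative bounds $Lr_j^p$ (van der Vaart--Wellner Cor.~2.7.2),
\[
\log N_{[\,]}(\epsilon Lr_j^p\mu(A_j)^{1/2},\{h\ind_{A_j}\},L_2(\mu))\le C\,2^{jd}\epsilon^{-d/s}.
\]
Since $s>d/2$, the bracketing integral converges. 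The bracketing maximal inequality then gives
\[
\E\sup_h|(\mu_N-\mu)(h\ind_{A_j})|\le C N^{-1/2}Lr_j^p\mu(A_j)^{1/2}2^{jd/2}.
\]
The truncated-envelope term in the maximal inequality, of order $Lr_j^p\cdot(\text{entropy})/N$, must also be controlled; it is handled by restricting to shells with $2^{jd}\lesssim N$. Using $\mu(A_j)^{1/2}\le CM_q^{1/2}2^{-jq/2}$, each head shell contributes $CLM_q(\mu)N^{-1/2}2^{j(d+2p-q)/2}$, where $M_q^{1/2}\le M_q$ because $M_q\ge1$.

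\textbf{Summing and choosing $J$.} If $q>d+2p$, the head series is geometric and convergent, so we may take $J=\infty$ and obtain $N^{-1/2}$. If $q=d+2p$, each head term is equal, so the head part is $JN^{-1/2}$. If $q<d+2p$, the head part is dominated by its last term, $N^{-1/2}2^{J(d+2p-q)/2}$. We choose $2^J=N^{1/(q+d)}$ to balance this against the tail $2^{-J(q-p)}$. Both then equal $N^{-(q-p)/(q+d)}$, since
\[
-\tfrac12+\tfrac{d+2p-q}{2(q+d)}=-\tfrac{q-p}{q+d}.
\]
In the case $q=d+2p$, $J\sim\log N$ gives $N^{-1/2}\log N$. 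This choice also keeps $2^{Jd}\le N$, which controls the remainder term in the maximal inequality.

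\textbf{The $L_2$ bound.} For the second claim, we repeat the argument using $L_2$ norms of the suprema. On the head shells, the bracketing inequality also bounds second moments via Talagrand or Bousquet concentration, or via the $L_2$ version of the Ossiander bound. On the tail shells we bound
\[
\Bigl(\E\sup_h|(\mu_N-\mu)(h\ind_{A_j})|^2\Bigr)^{1/2}\le Lr_j^p\bigl(\mu(A_j)^{1/2}N^{-1/2}+2\mu(A_j)\bigr).
\]
We then need $\sum_j r_j^p\mu(A_j)^{1/2}<\infty$ for the sum to be of the same order. This is where $q\ge2p$ enters: it makes $\int|x|^{2p}d\mu$ finite, so the envelope is square integrable. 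The per-shell estimate is summed with Minkowski's inequality.

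The main obstacle is the head-shell step. One must verify that the per-shell bracketing bound carries exactly the factor $2^{jd/2}\mu(A_j)^{1/2}r_j^p$, with no extra logarithms. The crude remainder term in the Bernstein-type bracketing maximal inequality must also stay within the rate. I would handle this by applying the inequality to each shell separately with envelope $Lr_j^p\ind_{A_j}$, and checking that $a(\delta)$-truncation costs at most $Lr_j^p2^{jd}/N\le Lr_j^p2^{jd/2}\mu(A_j)^{1/2}N^{-1/2}$ on the relevant range of $j$. Where this fails, I would move those shells into the tail, which is controlled by the envelope alone.
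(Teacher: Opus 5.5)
Your first-moment argument follows the paper's route: dyadic shells, rescaled $C^s$ entropy turned into $L_2(\mu)$-brackets of width $\delta\mu(A_j)^{1/2}$, a per-shell bracketing maximal inequality giving $N^{-1/2}Lr_j^{p+d/2}\mu(A_j)^{1/2}$, the envelope on the tail, and $2^J\asymp N^{1/(q+d)}$. The genuine gap is in the second-moment part. You bound the tail shells one at a time and add their $L_2$ norms by Minkowski. That requires $\sum_j r_j^p\mu(A_j)^{1/2}<\infty$, which you attribute to $q\ge 2p$. But square integrability of the envelope only gives $\sum_j r_j^{2p}\mu(A_j)<\infty$. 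The bound $\mu(A_j)\le CM_q(\mu)r_j^{-q}$ makes your series converge only for $q>2p$. At $q=2p$, which the lemma covers, it can diverge. For example, $\mu(A_j)\asymp 2^{-2pj}j^{-2}$ has $M_{2p}(\mu)<\infty$ but $\sum_j r_j^p\mu(A_j)^{1/2}\asymp\sum_j j^{-1}=\infty$, so your tail bound is vacuous there. The repair, which is what the paper does, is to bound the whole tail at once. With $F=CL(1+|x|^p)$ and $G=F\ind_{B_R^c}$, one has $\sup_h|(\mu_N-\mu)(h\ind_{B_R^c})|\le \mu_N G+\mu G$. Hence $(\E(\mu_N G+\mu G)^2)^{1/2}\le 2\mu G+N^{-1/2}\|G\|_{L_2(\mu)}$. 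Variances add across shells, standard deviations do not, so only $F\in L_2(\mu)$, i.e.\ $q\ge 2p$, is needed. For the head shells you do not need Talagrand or Bousquet. Write $\mathbb G_N=N^{1/2}(\mu_N-\mu)$; the Hoffmann-J{\o}rgensen-type bound $\bigl(\E^*\sup_{f\in\mathcal F}|\mathbb G_N f|^2\bigr)^{1/2}\lesssim \E^*\sup_{f\in\mathcal F}|\mathbb G_N f|+\|F\|_{L_2(\mu)}$, applied to the truncated class, suffices.

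Your remainder bookkeeping also needs correcting. In the Bernstein-type bracketing inequality the per-shell remainder is $\asymp N^{-1}Lr_j^{p+d}$, with no factor $\mu(A_j)$. Restricting to $2^{jd}\lesssim N$ only makes it $\lesssim Lr_j^p$, which is not small. Your comparison with the main term is equivalent to $2^{jd}\le N\mu(A_j)$, a lower bound on $\mu(A_j)$ that moment assumptions cannot supply. What actually works is the summed bound $\sum_{j\le J}N^{-1}Lr_j^{p+d}\asymp LN^{-1}2^{J(p+d)}$. This equals $LN^{-(q-p)/(q+d)}$ at $2^J=N^{1/(q+d)}$, so your choice of $J$ is fine for $q\le d+2p$, though not for the reason you give. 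Equivalently, your ``move to the tail'' device gives a per-shell cost $\min\{2Lr_j^p\mu(A_j),\,CLr_j^{p+d}/N\}$. These sum to $O(LM_q(\mu)N^{-(q-p)/(q+d)})$, which is $O(N^{-1/2})$ when $q\ge d+2p$. It follows that taking $J=\infty$ for $q>d+2p$ is inconsistent with keeping the remainder, since the remainders then diverge. You have two options. You can cut at a finite level, e.g.\ $2^J\asymp N^{1/(2(p+d))}$, whose tail $2^{-J(q-p)}$ is at most $N^{-1/2}$ because $q\ge d+2p$. Alternatively, use the form $\E^*\sup_{f\in\mathcal F_j}|\mathbb G_N f|\lesssim J_{[\,]}(1,\mathcal F_j,L_2(\mu))\|F_j\|_{L_2(\mu)}$ of van der Vaart--Wellner's Theorem 2.14.2. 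That form has no remainder term at all, which makes $J=\infty$ legitimate.
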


\begin{proof}
Let \(\mu_j=\mu(A_j)\).  The moment assumption yields \(\mu_j\le C M_q(\mu)r_j^{-q}\).  On \(A_j\), the class is contained in a \(C^s\) ball of smoothness \(s>d/2\), derivative radius \(Lr_j^p\), and diameter \(O(r_j)\).  Rescaling \(A_j\) to a fixed annulus multiplies derivatives of order \(|a|\) by at most \(r_j^{|a|}\).  The standard \(C^s\)-entropy bound on bounded sets therefore gives
\[
\log N(\delta,\cH_L|_{A_j},L_\infty(A_j))
\le C r_j^d\left(\frac{Lr_j^p}{\delta}\right)^{d/s}.
\]
An \(L_\infty(A_j)\)-bracket of width \(\delta\) induces an \(L_2(\mu)\)-bracket of width \(\delta \mu_j^{1/2}\).  Hence the \(L_2(\mu)\)-bracketing integral over the shell is obtained by substituting \(\delta=\eps \mu_j^{-1/2}\) in the preceding display.  Since \(d/s<2\), the power of \(\eps\) is integrable near zero, and
\[
J_{[]}(A_j)
:=\int_0^{Lr_j^p\mu_j^{1/2}}
\sqrt{1+\log N_{[]}(\eps,\cH_L\ind_{A_j},L_2(\mu))}\,d\eps
\le C L r_j^{p+d/2}\mu_j^{1/2}.
\]
For the shell class \(\mathcal F_j=\{h\ind_{A_j}:h\in\cH_L\}\), use the envelope \(F_j=CLr_j^p\ind_{A_j}\), whose \(L_2(\mu)\)-norm is \(CLr_j^p\mu_j^{1/2}\).  Applying the bracketing maximal inequality \citep[Theorem 2.14.2]{vdVW1996} to each \(\mathcal F_j\) and summing over \(j\le J\) (omitting shells with \(\mu_j=0\)), we obtain
\[
\E\sup_{h\in\cH_L}|(\mu_N-\mu)(h\ind_{B_R})|\le C\sum_{j=0}^J\left\{
N^{-1/2}J_{[]}(A_j)
{}+
N^{-1}\frac{J_{[]}(A_j)^2}{Lr_j^p\mu_j^{1/2}}
\right\}.
\]
The second term is the lower-order envelope contribution in the bracketing maximal inequality.  Substituting the bound on \(J_{[]}(A_j)\), it is at most
\[
C N^{-1}L\sum_{j=0}^J r_j^{p+d}\mu_j^{1/2}.
\]
The envelope on \(A_j\) is bounded by \(CLr_j^p\), so the tail outside \(B_R\) satisfies
\[
\E\sup_{h\in\cH_L}|(\mu_N-\mu)(h\ind_{B_R^c})|
\le CL\int_{|x|>R}(1+|x|^p)\mu(dx)
\le CLM_q(\mu)R^{p-q}.
\]
Combining the preceding estimates yields
\[
CLM_q(\mu)\left\{N^{-1/2}\sum_{j=0}^J r_j^{p+d/2-q/2}
 +N^{-1}\sum_{j=0}^J r_j^{p+d-q/2}
 +R^{p-q}\right\}.
\]
The three regimes follow by balancing the first shell sum with the moment tail.  If \(p<q<d+2p\), set \(R=N^{1/(q+d)}\). Then \(N^{-1/2}R^{p+d/2-q/2}\) and \(R^{p-q}\) are both of order \(N^{-(q-p)/(q+d)}\), while the \(N^{-1}\)-term is of smaller order.  If \(q=d+2p\), the first shell sum is logarithmic and this choice of \(R\) gives \(N^{-1/2}\log N\).  If \(q>d+2p\), the first shell sum is bounded.  The lower-order \(N^{-1}\)-term can then be controlled separately: if \(p+d-q/2<0\) we let \(R\to\infty\); if \(p+d-q/2=0\), take \(R=\exp(N^{1/4})\); and if \(p+d-q/2>0\), choose \(R=N^{1/\{2(p+d-q/2)\}}\).  In each subcase this term is \(O(N^{-1/2})\) or smaller, and the tail is also \(O(N^{-1/2})\).  This leads to the three regimes in the definition of \(\rho_{q,p,d}\).

For the second-moment bound, use the same truncation and define
\[
Z_N=\sup_{h\in\cH_L}|(\mu_N-\mu)h|,
\]
along with the envelope \(F(x)=CL(1+|x|^p)\).  Applying the square-function version of the same bracketing maximal inequality to the centered process indexed by the truncated class yields
\[
\left(\E\sup_{h\in\cH_L}|(\mu_N-\mu)(h\ind_{B_R})|^2\right)^{1/2}
\le C N^{-1/2}
\left\{\sum_{j=0}^J J_{[]}(A_j)+\|F\ind_{B_R}\|_{L_2(\mu)}\right\}.
\]
Here \(R=2^J\), and the term \(\|F\ind_{B_R}\|_{L_2(\mu)}\) is bounded by \(CLM_q(\mu)\) whenever \(q\ge2p\). For the tail, since \(|h|\le F\),
\[
\sup_{h\in\cH_L}|(\mu_N-\mu)(h\ind_{B_R^c})|
\le \mu_N(F\ind_{B_R^c})+\mu(F\ind_{B_R^c}).
\]
This gives
\[
\begin{aligned}
&\left(\E\sup_{h\in\cH_L}|(\mu_N-\mu)(h\ind_{B_R^c})|^2\right)^{1/2}\\
&\quad\le C\left\{\mu(F\ind_{B_R^c})
 +N^{-1/2}\|F\ind_{B_R^c}\|_{L_2(\mu)}\right\}\\
&\quad\le CLM_q(\mu)\left\{R^{p-q}+N^{-1/2}R^{(2p-q)/2}\right\},
\end{aligned}
\]
with the convention that the last factor \(R^{(2p-q)/2}\le1\) when \(q\ge2p\).  Combining the truncated and tail bounds yields
\[
(\E Z_N^2)^{1/2}
\le CLM_q(\mu)\left\{
N^{-1/2}\sum_{j=0}^J r_j^{p+d/2-q/2}
 +N^{-1/2}
 +R^{p-q}
 +N^{-1/2}R^{(2p-q)/2}
\right\}.
\]
If \(2p\le q<d+2p\), choose \(R=N^{1/(q+d)}\).  The first and third terms are of order \(N^{-(q-p)/(q+d)}\), while the remaining two terms are of no larger order. If \(q=d+2p\), choosing \(R=N^{1/(q+d)}\) yields \(N^{-1/2}\log N\).  If \(q>d+2p\), letting \(R\to\infty\) renders the shell sum finite, the tail vanishes, and the bound reduces to \(N^{-1/2}\). This proves the second-moment estimate.
\end{proof}

\section{Main rate results under finite moments}
\label{sec:main}
The proof has one technical point that is absent for deterministic marginals: the envelopes of the empirical dual potentials depend on empirical \(p\)-moments.  We first localize on events where these moments are bounded.  Lemmas~\ref{lem:growth} and~\ref{lem:gaussian-smooth} then place the convolved empirical potentials in a deterministic shell class \(\cH_L\).  The expectation bound is proved afterwards under the stronger square-integrability condition \(q\ge2p\).

\begin{theorem}[Two-sample rate in probability under finite moments]\label{thm:main-prob}
Let \(p\ge1\), \(\sigma>0\), and let \(X_1,\ldots,X_m\) and \(Y_1,\ldots,Y_n\) be independent samples with laws \(\mu\) and \(\nu\) on \(\R^d\).  Suppose that \(M_{q_\mu}(\mu)<\infty\) and \(M_{q_\nu}(\nu)<\infty\) for some \(q_\mu,q_\nu>p\).  Then
\[
\bigl|T_p^{(\sigma)}(\mu_m,\nu_n)-T_p^{(\sigma)}(\mu,\nu)\bigr|
=O_{\PP}\{\rho_{q_\mu,p,d}(m)+\rho_{q_\nu,p,d}(n)\}.
\]
Equivalently, for every \(\eta>0\) there is a constant \(C_\eta<\infty\) such that, for all sufficiently large \(m,n\),
\[
\PP\left(
\bigl|T_p^{(\sigma)}(\mu_m,\nu_n)-T_p^{(\sigma)}(\mu,\nu)\bigr|
> C_\eta\{\rho_{q_\mu,p,d}(m)+\rho_{q_\nu,p,d}(n)\}
\right)
\le\eta .
\]
\end{theorem}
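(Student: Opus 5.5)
We start from Lemma~\ref{lem:twosample-dual}, applied with \((\widetilde\mu,\widetilde\nu)=(\mu_m^\sigma,\nu_n^\sigma)\) and \((\mu,\nu)=(\mu^\sigma,\nu^\sigma)\). It bounds the error by
\[
\sup_{(\phi,\psi)\in\cD(\mu^\sigma,\nu^\sigma)\cup\cD(\mu_m^\sigma,\nu_n^\sigma)}
\bigl|(\mu_m-\mu)(\phi*\gamma_\sigma)+(\nu_n-\nu)(\psi*\gamma_\sigma)\bigr|
\le \sup_{\phi}|(\mu_m-\mu)(\phi*\gamma_\sigma)|+\sup_{\psi}|(\nu_n-\nu)(\psi*\gamma_\sigma)|.
\]
This separates the two samples. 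The only difficulty is that the second collection of potentials is random. By Lemma~\ref{lem:growth}, the normalized representatives in \(\cD(\mu_m^\sigma,\nu_n^\sigma)\) satisfy \(|\phi(x)|+|\psi(y)|\le C_p\{1+M_p(\mu_m^\sigma)+M_p(\nu_n^\sigma)+|x|^p+|y|^p\}\). Moreover \(M_p(\mu_m^\sigma)\le C_{p,\sigma}M_p(\mu_m)\), since \(|x+z|^p\le 2^{p-1}(|x|^p+|z|^p)\) and \(\gamma_\sigma\) has all moments; the same holds for \(\nu_n\). The potentials for the population pair obey the analogous deterministic bound with \(M_p(\mu^\sigma),M_p(\nu^\sigma)<\infty\).

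Next we localize. Fix \(\eta>0\). By Markov's inequality, since \(\E M_p(\mu_m)=M_p(\mu)\) and \(\E M_p(\nu_n)=M_p(\nu)\), there is \(K_\eta\) with
\[
\PP\bigl(M_p(\mu_m)+M_p(\nu_n)>K_\eta\bigr)\le\eta/2 .
\]
Call the complementary event \(\Omega_\eta\). On \(\Omega_\eta\), every potential appearing in the supremum satisfies \(|\phi(y)|\le L_\eta(1+|y|^p)\) with a deterministic \(L_\eta\ge1\) depending only on \(K_\eta,p,d,\sigma\) and the population moments. Lemma~\ref{lem:gaussian-smooth}, applied with \(k=s=\lfloor d/2\rfloor+1\), then places every \(\phi*\gamma_\sigma\) in the deterministic shell class \(\cH_{L'}\) with \(L'=C_{p,d,s,\sigma}L_\eta\). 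The same holds for every \(\psi*\gamma_\sigma\), and these classes do not depend on the sample. Hence on \(\Omega_\eta\) the error is at most
\[
Z:=\sup_{h\in\cH_{L'}}|(\mu_m-\mu)h|+\sup_{h\in\cH_{L'}}|(\nu_n-\nu)h| .
\]
The key point is that \(Z\) is defined without reference to \(\Omega_\eta\). Lemma~\ref{lem:shell-bound} therefore gives
\[
\E Z\le CL'\{M_{q_\mu}(\mu)\rho_{q_\mu,p,d}(m)+M_{q_\nu}(\nu)\rho_{q_\nu,p,d}(n)\},
\]
interpreted as an outer expectation. For \(C_\eta\) large, Markov's inequality bounds \(\PP^*(Z>C_\eta\{\rho_{q_\mu,p,d}(m)+\rho_{q_\nu,p,d}(n)\})\) by \(\eta/2\). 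Adding the probability of \(\Omega_\eta^c\) yields the claim.

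The main obstacle is this localization step. One must ensure that the random dual potentials for \((\mu_m^\sigma,\nu_n^\sigma)\) can be dominated uniformly by a deterministic class on a high-probability event, without conditioning the empirical process on that event, which would destroy independence. The plan avoids conditioning by bounding the error by the supremum over a fixed larger class and applying the expectation bound unconditionally. One point still needs to be checked: the constant \(C_0\) in Lemma~\ref{lem:growth} must depend on the marginals only through their \(p\)-moments, which holds because \(W_p^p\le C_p\{M_p(\mu)+M_p(\nu)\}\). Only the first-moment part of Lemma~\ref{lem:shell-bound} is needed, so \(q>p\) suffices.
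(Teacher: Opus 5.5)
Your proposal is correct and follows essentially the same route as the paper. It uses the dual reduction of Lemma~\ref{lem:twosample-dual}, localizes on an event where the empirical \(p\)-moments are bounded, places all convolved potentials in a deterministic shell class via Lemmas~\ref{lem:growth} and~\ref{lem:gaussian-smooth}, and finishes with the first-moment bound of Lemma~\ref{lem:shell-bound} and Markov's inequality. Your only deviation is applying Markov's inequality with \(\E M_p(\mu_m)=M_p(\mu)\) to control the localization event, a slightly more explicit, uniform-in-\((m,n)\) version of the paper's tightness argument.
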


\begin{proof}
By Lemma~\ref{lem:twosample-dual}, the cost error is bounded by the sum of the \(\mu\)- and \(\nu\)-empirical processes indexed by the smoothed potentials belonging to \(\cD(\mu^\sigma,\nu^\sigma)\cup\cD(\mu_m^\sigma,\nu_n^\sigma)\).

Fix \(M\ge1\) and consider the event \(E_M=\{M_p(\mu_m)+M_p(\nu_n)\le M\}\). Since \(q_\mu,q_\nu>p\), the sequences of empirical \(p\)-moments are tight, so for every \(\eta>0\) one can choose \(M\) large enough such that \(\limsup_{m,n}\PP(E_M^c)\le\eta/2\).  Gaussian smoothing also preserves the order of moments: for an independent \(Z\sim\gamma_\sigma\),
\[
M_p(\mu_m^\sigma)=\frac1m\sum_{i=1}^m\E_Z(1+|X_i+Z|^p)
\le C_{p,\sigma}\{1+M_p(\mu_m)\},
\]
and similarly \(M_p(\nu_n^\sigma)\le C_{p,\sigma}\{1+M_p(\nu_n)\}\). Hence, on \(E_M\), Lemma~\ref{lem:growth} provides an envelope with polynomial growth for the optimal potentials associated with the population pair \(\mu^\sigma,\nu^\sigma\) and the empirical pair \(\mu_m^\sigma,\nu_n^\sigma\). The deterministic constant depends on \(M\), \(M_p(\mu)\), \(M_p(\nu)\), \(p\), \(d\), and \(\sigma\), but not on \(m,n\) or on the realization inside \(E_M\).  Lemma~\ref{lem:gaussian-smooth} further shows that all relevant convolved potentials fall into the deterministic shell classes \(\cH_{L_M}^{\mu}\) and \(\cH_{L_M}^{\nu}\).

On \(E_M\), this gives
\[
\bigl|T_p^{(\sigma)}(\mu_m,\nu_n)-T_p^{(\sigma)}(\mu,\nu)\bigr|\le
\sup_{h\in\cH_{L_M}^{\mu}}| (\mu_m-\mu)h |+
\sup_{g\in\cH_{L_M}^{\nu}}| (\nu_n-\nu)g |.
\]
Lemma~\ref{lem:shell-bound} yields the expectation bounds 
\[
\E\sup_{h\in\cH_{L_M}^{\mu}}| (\mu_m-\mu)h |
\le C_M\rho_{q_\mu,p,d}(m),
\quad
\E\sup_{g\in\cH_{L_M}^{\nu}}| (\nu_n-\nu)g |
\le C_M\rho_{q_\nu,p,d}(n).
\]
By Markov's inequality, for a sufficiently large constant \(C_\eta\), the probability of the displayed error exceeding \(C_\eta\{\rho_{q_\mu,p,d}(m)+\rho_{q_\nu,p,d}(n)\}\) on \(E_M\) is at most \(\eta/2\).  Combining this with the tail bound \(\PP(E_M^c)\le\eta/2\) completes the proof.
\end{proof}

\begin{corollary}\label{cor:separated-distance}
Under the assumptions of Theorem~\ref{thm:main-prob}, recall the smoothed Wasserstein distance
\[
D_p^{(\sigma)}(\mu,\nu)=W_p(\mu^\sigma,\nu^\sigma),\qquad
\widehat D_{m,n}=D_p^{(\sigma)}(\mu_m,\nu_n),
\]
and write
\[
r_{m,n}=\rho_{q_\mu,p,d}(m)+\rho_{q_\nu,p,d}(n).
\]
If the smoothed population distance is separated from zero, namely
\[
\Delta_\sigma:=D_p^{(\sigma)}(\mu,\nu)>0,
\]
then
\[
|\widehat D_{m,n}-D_p^{(\sigma)}(\mu,\nu)|=O_{\PP}(r_{m,n}).
\]
\end{corollary}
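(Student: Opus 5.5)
The plan is to transfer the cost bound of Theorem~\ref{thm:main-prob} to the distance through the elementary inequality for the map \(t\mapsto t^{1/p}\) away from the origin. Write \(\widehat T=T_p^{(\sigma)}(\mu_m,\nu_n)=\widehat D_{m,n}^p\) and \(T=T_p^{(\sigma)}(\mu,\nu)=\Delta_\sigma^p>0\). Since both \(\widehat D_{m,n}\) and \(\Delta_\sigma\) are nonnegative and \(\Delta_\sigma>0\), the mean value theorem (or the factorization \(a^p-b^p=(a-b)\sum_{k=0}^{p-1}a^kb^{p-1-k}\) generalized to real \(p\ge1\)) gives
\[
|\widehat T-T|=|\widehat D_{m,n}^p-\Delta_\sigma^p|\ge \Delta_\sigma^{p-1}\,|\widehat D_{m,n}-\Delta_\sigma|,
\]
because for \(a,b\ge0\) and \(p\ge1\) one has \(|a^p-b^p|\ge \max(a,b)^{p-1}|a-b|\ge b^{p-1}|a-b|\). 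This step requires no localization at all: it is a deterministic inequality valid for every realization.

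Consequently \(|\widehat D_{m,n}-\Delta_\sigma|\le \Delta_\sigma^{1-p}|\widehat T-T|\) pointwise. Applying Theorem~\ref{thm:main-prob}, for each \(\eta>0\) there is \(C_\eta\) with \(\PP(|\widehat T-T|>C_\eta r_{m,n})\le\eta\) for large \(m,n\), hence
\[
\PP\bigl(|\widehat D_{m,n}-\Delta_\sigma|>\Delta_\sigma^{1-p}C_\eta r_{m,n}\bigr)\le\eta,
\]
which is the claimed \(O_{\PP}(r_{m,n})\) bound, with constant depending on \(\Delta_\sigma\) (consistent with the paper's convention on subscripted constants).

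The only point to check carefully is the elementary inequality for non-integer \(p\): by the mean value theorem, \(a^p-b^p=p\xi^{p-1}(a-b)\) for some \(\xi\) between \(a\) and \(b\), and if \(a\ge b\) then \(\xi\ge b\) so \(p\xi^{p-1}\ge b^{p-1}\); if \(a<b\), use instead the convexity bound \(b^p-a^p\ge b^{p-1}(b-a)\), which follows from \(a^p\le b^{p-1}a\) when \(0\le a\le b\). For \(p=1\) the statement is trivial. I do not expect a genuine obstacle; the content is that separation \(\Delta_\sigma>0\) removes the \(1/p\)-power loss \(|\widehat D-\Delta|\le|\widehat T-T|^{1/p}\) that would hold near the null.
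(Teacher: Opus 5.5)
Your proof is correct and rests on the same idea as the paper's: the cost rate of Theorem~\ref{thm:main-prob} is transferred through the Lipschitz behaviour of \(t\mapsto t^{1/p}\) away from the origin. The only difference is in how that Lipschitz bound is obtained. The paper localizes to the event \(\{|\widehat D_{m,n}^p-\Delta_\sigma^p|\le \Delta_\sigma^p/2\}\), whose probability tends to one because \(r_{m,n}\to0\), and then uses the constant \(\frac1p(\Delta_\sigma^p/2)^{1/p-1}\) on \([\Delta_\sigma^p/2,\infty)\). Your global inequality \(|a^p-b^p|\ge b^{p-1}|a-b|\) instead gives the pathwise bound \(|\widehat D_{m,n}-\Delta_\sigma|\le\Delta_\sigma^{1-p}|T_p^{(\sigma)}(\mu_m,\nu_n)-T_p^{(\sigma)}(\mu,\nu)|\) with no localization at all. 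This is the same device the paper uses for Corollary~\ref{cor:mean-separated-distance}, and it yields that expectation version immediately from Theorem~\ref{thm:mean}.
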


\begin{proof}
Set \(A_{m,n}=\widehat D_{m,n}^p\) and \(A=\Delta_\sigma^p\).  Theorem~\ref{thm:main-prob} yields \( |A_{m,n}-A|=O_{\PP}(r_{m,n})\).  Since \(A>0\), the event \(\{|A_{m,n}-A|\le A/2 \} \) occurs with probability tending to one.  On this event, the map \(t\mapsto t^{1/p}\) is Lipschitz on \([A/2,\infty)\), whence
\[
|A_{m,n}^{1/p}-A^{1/p}|
\le \frac1p(A/2)^{1/p-1}|A_{m,n}-A|.
\]
This establishes the distance rate under separation.  For the Gaussian kernel, \(\Delta_\sigma>0\) is equivalent to \(\mu\ne\nu\), since the Gaussian characteristic function is non-vanishing and convolution by \(\gamma_\sigma\) is injective. 
\end{proof}

\begin{remark}
 For comparison, the triangle inequality yields
\[
|\widehat D_{m,n}-D_p^{(\sigma)}(\mu,\nu)|
\le D_p^{(\sigma)}(\mu_m,\mu)+D_p^{(\sigma)}(\nu_n,\nu).
\]
Applying the one-sample version of Theorem~\ref{thm:main-prob} to each term gives one-sample cost errors of orders \(\rho_{q_\mu,p,d}(m)\) and \(\rho_{q_\nu,p,d}(n)\), respectively. Taking \(p\)th roots merely produces 
\[
|\widehat D_{m,n}-D_p^{(\sigma)}(\mu,\nu)|
=O_{\PP}\{\rho_{q_\mu,p,d}(m)^{1/p}+\rho_{q_\nu,p,d}(n)^{1/p}\}.
\]
For \(p>1\) and separated alternatives, the direct two-sample cost bound avoids the usual loss caused by taking the \(p\)th root at the origin.
\end{remark}

\begin{theorem}[Expectation rate under square-integrable envelopes]\label{thm:mean}
Under the assumptions of Theorem~\ref{thm:main-prob}, suppose in addition that \(q_\mu\ge2p\) and \(q_\nu\ge2p\).  Then
\[
\E\bigl|T_p^{(\sigma)}(\mu_m,\nu_n)-T_p^{(\sigma)}(\mu,\nu)\bigr|
\le C\{\rho_{q_\mu,p,d}(m)+\rho_{q_\nu,p,d}(n)\}.
\]
The distance satisfies
\[
\E\left|W_p(\mu_m^\sigma,\nu_n^\sigma)-W_p(\mu^\sigma,\nu^\sigma)\right|
\le C\{\rho_{q_\mu,p,d}(m)+\rho_{q_\nu,p,d}(n)\}^{1/p}.
\]
\end{theorem}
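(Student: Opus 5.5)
The plan is to upgrade the localization argument of Theorem~\ref{thm:main-prob} from a probability statement to an expectation statement. On the good event the bound is the same as before. On the complement we use the second-moment part of Lemma~\ref{lem:shell-bound} (available because \(q_\mu,q_\nu\ge2p\)) together with a crude polynomial bound on the error. Write \(\Delta_{m,n}=|T_p^{(\sigma)}(\mu_m,\nu_n)-T_p^{(\sigma)}(\mu,\nu)|\) and split
\[
\E\Delta_{m,n}=\E[\Delta_{m,n}\ind_{E_M}]+\E[\Delta_{m,n}\ind_{E_M^c}],
\]
where \(M\) is now a \emph{fixed} constant, for instance \(M=4\{M_p(\mu)+M_p(\nu)\}\), chosen so that \(\E[M_p(\mu_m)+M_p(\nu_n)]\le M/4\).

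\textbf{Good event.} On \(E_M\) the proof of Theorem~\ref{thm:main-prob} already dominates \(\Delta_{m,n}\) by \(\sup_{\cH^\mu_{L_M}}|(\mu_m-\mu)h|+\sup_{\cH^\nu_{L_M}}|(\nu_n-\nu)g|\). The first part of Lemma~\ref{lem:shell-bound} then gives the contribution \(C_M r_{m,n}\) directly.

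\textbf{Bad event.} Here the random envelope is not controlled, so we work with a dependence on the empirical moments. Applying Lemmas~\ref{lem:growth} and~\ref{lem:gaussian-smooth} without localization, the smoothed empirical potentials lie in \(\cH_{L}\) with random \(L=C\{1+M_p(\mu_m)+M_p(\nu_n)\}\). By homogeneity, they lie in \(L\cdot\cH_1\), where \(\cH_1\) is deterministic. Hence, for all realizations,
\[
\Delta_{m,n}\le C\{1+M_p(\mu_m)+M_p(\nu_n)\}(Z^\mu_m+Z^\nu_n),
\]
where \(Z^\mu_m=\sup_{\cH_1^\mu}|(\mu_m-\mu)h|\) and \(Z^\nu_n\) is defined analogously. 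The population potentials are already covered with a deterministic constant, so the sum \(Z^\mu_m+Z^\nu_n\) handles both unions in Lemma~\ref{lem:twosample-dual}. Actually this pointwise bound makes the split unnecessary. By Cauchy--Schwarz,
\[
\E\Delta_{m,n}\le C\bigl(\E\{1+M_p(\mu_m)+M_p(\nu_n)\}^2\bigr)^{1/2}\bigl\{(\E (Z_m^\mu)^2)^{1/2}+(\E (Z_n^\nu)^2)^{1/2}\bigr\}.
\]
The first factor is bounded because \(q\ge2p\) gives \(\E M_p(\mu_m)^2\le M_{2p}(\mu)<\infty\). The second factor is \(\le Cr_{m,n}\) by the second-moment part of Lemma~\ref{lem:shell-bound}. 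This is the main step, and the point where \(q\ge2p\) enters twice. The obstacle is checking that Lemma~\ref{lem:growth}'s constant is genuinely linear in \(M_p\), so that the scaling \(L\cH_1\) is valid. This holds because \(C_0\) is linear in \(\int a\,d\mu+\int b\,d\nu+W_p^p\), and because the smoothed moments satisfy \(M_p(\mu_m^\sigma)\le C\{1+M_p(\mu_m)\}\).

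\textbf{Distance bound.} For the distance we use the elementary inequality \(|a-b|\le|a^p-b^p|^{1/p}\) for \(a,b\ge0\) and \(p\ge1\). This gives \(|W_p(\mu_m^\sigma,\nu_n^\sigma)-W_p(\mu^\sigma,\nu^\sigma)|\le\Delta_{m,n}^{1/p}\). Jensen's inequality for the concave map \(t\mapsto t^{1/p}\) then yields \(\E\Delta_{m,n}^{1/p}\le(\E\Delta_{m,n})^{1/p}\le C r_{m,n}^{1/p}\).
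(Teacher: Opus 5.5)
Your argument is correct and, once you drop the preliminary good/bad-event split (which, as you noticed, is unnecessary), it is essentially the paper's proof: a random radius linear in the empirical and population \(p\)-moments, inclusion of all convolved potentials in \(L_{m,n}\cH_1\), Cauchy--Schwarz combined with the second-moment part of Lemma~\ref{lem:shell-bound}, and the inequality \(|a-b|\le|a^p-b^p|^{1/p}\) plus Jensen for the distance. One small correction: Jensen gives \(\E M_p(\mu_m)^2\le 2M_{2p}(\mu)\) rather than \(M_{2p}(\mu)\), which does not affect the conclusion.
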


\begin{proof}
The argument combines the localization scheme developed in Theorem~\ref{thm:main-prob} with the square-integrable envelope structure. Let
\[
L_{m,n}=C\{1+M_p(\mu_m)+M_p(\nu_n)+M_p(\mu)+M_p(\nu)\}
\]
be the random envelope radius produced by Lemmas~\ref{lem:growth} and~\ref{lem:gaussian-smooth}.  Let \(\cH_1^\mu\) and \(\cH_1^\nu\) denote the corresponding unit shell classes, i.e., the classes satisfying the derivative bounds of \(\cH_L\) with \(L=1\).  On every realization, each relevant convolved potential lies in \(L_{m,n}\cH_1^\mu\) on the \(\mu\)-side and in \(L_{m,n}\cH_1^\nu\) on the \(\nu\)-side.  The dual reduction yields
\[
\bigl|T_p^{(\sigma)}(\mu_m,\nu_n)-T_p^{(\sigma)}(\mu,\nu)\bigr|
\le L_{m,n}(Z_m^\mu+Z_n^\nu),
\]
where
\[
Z_m^\mu=\sup_{h\in\cH_1^\mu}|(\mu_m-\mu)h|,
\qquad
Z_n^\nu=\sup_{g\in\cH_1^\nu}|(\nu_n-\nu)g|.
\]
Since \(q_\mu,q_\nu\ge2p\), the empirical \(p\)-moment envelopes are square integrable:
\[
\sup_m \E M_p(\mu_m)^2<\infty,\qquad
\sup_n \E M_p(\nu_n)^2<\infty.
\]
Indeed, \(M_p(\mu_m)\le m^{-1}\sum_{i=1}^m(1+|X_i|^p)\), and the right-hand side possesses a uniformly bounded second moment whenever \(M_{2p}(\mu)<\infty\); the \(\nu_n\) term is handled identically.
It follows that \(\sup_{m,n}\E L_{m,n}^2<\infty\).  By the Cauchy--Schwarz inequality and the second-moment part of Lemma~\ref{lem:shell-bound},
\[
\E L_{m,n}Z_m^\mu
\le (\E L_{m,n}^2)^{1/2}(\E (Z_m^\mu)^2)^{1/2}
\le C\rho_{q_\mu,p,d}(m),
\]
and the \(\nu\)-side gives \(\E L_{m,n}Z_n^\nu\le C\rho_{q_\nu,p,d}(n)\).  This proves the cost bound.  The condition \(q\ge2p\) is used only to make the random polynomial envelope square integrable. The rate itself is still determined by the threshold arising from the shell decomposition, namely \(q=d+2p\).

The distance bound follows from the elementary inequality \(|a-b|\le |a^p-b^p|^{1/p}\) (\(a,b>0\)), combined with an application of Jensen's inequality.    
\end{proof}

\begin{corollary}\label{cor:mean-separated-distance}
Under the assumptions of Theorem~\ref{thm:mean}, recall
\[
D_p^{(\sigma)}(\mu,\nu)=W_p(\mu^\sigma,\nu^\sigma),\qquad
\Delta_\sigma=D_p^{(\sigma)}(\mu,\nu),\qquad
\widehat D_{m,n}=D_p^{(\sigma)}(\mu_m,\nu_n).
\]
If \(\Delta_\sigma>0\), then
\[
\E\left|\widehat D_{m,n}-\Delta_\sigma\right|
\le C_\Delta\{\rho_{q_\mu,p,d}(m)+\rho_{q_\nu,p,d}(n)\}.
\]
For the Gaussian kernel, \(\mu\ne\nu\) implies \(\Delta_\sigma>0\), so the conclusion applies at every fixed alternative.
\end{corollary}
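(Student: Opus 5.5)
The plan is to turn the expectation bound for the cost in Theorem~\ref{thm:mean} into a bound for the distance. The key tool is a deterministic inequality that behaves like the Lipschitz estimate in Corollary~\ref{cor:separated-distance}, but holds on every realization, so no probabilistic localization is needed. Write \(A_{m,n}=\widehat D_{m,n}^p\) and \(A=\Delta_\sigma^p>0\).

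For \(a,b\ge0\) with \(b>0\) and \(p\ge1\), we have \(a^p-b^p=(a-b)\sum_{k=0}^{p-1}a^{k}b^{p-1-k}\) when \(p\) is an integer. For general \(p\), the mean value theorem gives \(|a^p-b^p|=p\xi^{p-1}|a-b|\) for some \(\xi\) between \(a\) and \(b\). A cleaner route is the bound
\[
|a-b|\le \frac{|a^p-b^p|}{b^{p-1}}, \qquad a\ge0,\ b>0,\ p\ge1 .
\]
To check it, set \(t=a/b\). The claim reduces to \(|t-1|\le|t^p-1|\) for \(t\ge0\). For \(t\ge1\) this holds because \(t^p\ge t\). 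For \(0\le t<1\) it holds because \(t^p\le t\), so \(1-t^p\ge 1-t\).

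Applying this with \(a=\widehat D_{m,n}\) and \(b=\Delta_\sigma\) gives, pointwise,
\[
|\widehat D_{m,n}-\Delta_\sigma|\le \Delta_\sigma^{1-p}\,\bigl|T_p^{(\sigma)}(\mu_m,\nu_n)-T_p^{(\sigma)}(\mu,\nu)\bigr|.
\]
Taking expectations and invoking the cost bound of Theorem~\ref{thm:mean} then yields the claim with \(C_\Delta=C\Delta_\sigma^{1-p}\). Measurability of the statistic was noted in the notation section, so the expectation makes sense. No splitting into the event \(\{|A_{m,n}-A|\le A/2\}\) and its complement is needed. That split would otherwise be the delicate step, since on the complement one would need a tail bound of order \(r_{m,n}\) for \(\widehat D_{m,n}\), for instance via Markov's inequality applied to the cost error. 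The deterministic inequality avoids this entirely, and this is the step I would verify most carefully.

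The final sentence of the statement follows as in Corollary~\ref{cor:separated-distance}. The Gaussian characteristic function never vanishes, so convolution with \(\gamma_\sigma\) is injective. Hence \(\mu\ne\nu\) implies \(\mu^\sigma\ne\nu^\sigma\), and therefore \(W_p(\mu^\sigma,\nu^\sigma)>0\) because \(W_p\) is a metric on \(\cP_p(\R^d)\).
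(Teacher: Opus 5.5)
Your proof is correct and follows the paper's route. The paper also uses a global Lipschitz bound \(|t^{1/p}-A^{1/p}|\le C_\Delta|t-A|\) for all \(t\ge0\), obtained from boundedness of the difference quotient. It then takes expectations, applies Theorem~\ref{thm:mean}, and proves injectivity with the same characteristic-function argument. Your inequality \(|a-b|\le b^{1-p}|a^p-b^p|\) makes the paper's constant explicit, and \(\Delta_\sigma^{1-p}\) is in fact the sharp value of that quotient (attained at \(t=0\) by concavity of \(t\mapsto t^{1/p}\)).
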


\begin{proof}
Let \(A=\Delta_\sigma^p>0\) and \(A_{m,n}=\widehat D_{m,n}^p\).  The function
\[
t\longmapsto \frac{|t^{1/p}-A^{1/p}|}{|t-A|},\qquad t\ge0,
\]
with its continuous extension at \(t=A\), is bounded since \(A>0\).  Hence there exists \(C_\Delta<\infty\) such that
\[
|t^{1/p}-A^{1/p}|\le C_\Delta|t-A|,
\qquad t\ge0.
\]
Applying this inequality with \(t=A_{m,n}\) and invoking Theorem~\ref{thm:mean} yields
\[
\E|\widehat D_{m,n}-\Delta_\sigma|
\le C_\Delta\E|A_{m,n}-A|
\le C_\Delta\{\rho_{q_\mu,p,d}(m)+\rho_{q_\nu,p,d}(n)\}.
\]
If \(\mu^\sigma=\nu^\sigma\), their characteristic functions coincide after multiplication by the non-vanishing Gaussian characteristic function, implying \(\mu=\nu\).  Hence \(\mu\ne\nu\) entails \(\Delta_\sigma>0\).
\end{proof}
\begin{corollary}\label{cor:testing}
 Under the assumptions of Theorem~\ref{thm:main-prob}, set
\[
r_{m,n}=\rho_{q_\mu,p,d}(m)+\rho_{q_\nu,p,d}(n).
\]
For any threshold sequence \(\theta_{m,n}\) satisfying \(\theta_{m,n}/r_{m,n}\to\infty\), the test \(\varphi_{m,n}=\ind\{\widehat D_{m,n}^p>\theta_{m,n}\}\) has vanishing rejection probability under \(\mu=\nu\).  Furthermore, for any fixed constants \(K_\mu,K_\nu<\infty\), the power tends to one uniformly over alternatives satisfying
\[
M_{q_\mu}(\mu)\le K_\mu,\qquad M_{q_\nu}(\nu)\le K_\nu,\qquad
D_p^{(\sigma)}(\mu,\nu)^p\ge 2\theta_{m,n}.
\]
In particular, if \(\theta_{m,n}\to0\), then the power tends to one at every fixed alternative with \(D_p^{(\sigma)}(\mu,\nu)>0\) and finite \(q_\mu\)- and \(q_\nu\)-moments.
\end{corollary}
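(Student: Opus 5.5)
Under the null \(\mu=\nu\) we have \(T_p^{(\sigma)}(\mu,\nu)=0\), so the rejection event \(\{\widehat D_{m,n}^p>\theta_{m,n}\}\) is contained in \(\{|T_p^{(\sigma)}(\mu_m,\nu_n)-T_p^{(\sigma)}(\mu,\nu)|>\theta_{m,n}\}\). Fix \(\eta>0\) and take \(C_\eta\) from Theorem~\ref{thm:main-prob}. Since \(\theta_{m,n}/r_{m,n}\to\infty\), we eventually have \(\theta_{m,n}\ge C_\eta r_{m,n}\), so the rejection probability is at most \(\eta\) for all large \(m,n\). Letting \(\eta\downarrow0\) gives size tending to zero.

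For power, suppose \(D_p^{(\sigma)}(\mu,\nu)^p\ge2\theta_{m,n}\). Then acceptance \(\{\widehat D_{m,n}^p\le\theta_{m,n}\}\) forces \(T_p^{(\sigma)}(\mu,\nu)-T_p^{(\sigma)}(\mu_m,\nu_n)\ge\theta_{m,n}\). So it suffices to show that
\[
\sup\PP\bigl(|T_p^{(\sigma)}(\mu_m,\nu_n)-T_p^{(\sigma)}(\mu,\nu)|\ge\theta_{m,n}\bigr)\to0,
\]
where the supremum is over the moment class \(\{M_{q_\mu}(\mu)\le K_\mu,\ M_{q_\nu}(\nu)\le K_\nu\}\). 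The key step, and the main obstacle, is that Theorem~\ref{thm:main-prob} as stated gives \(C_\eta\) for a fixed pair. Uniformity has to be recovered by going back into its proof.

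I would revisit that proof as follows. The localization event \(E_M\) has \(\PP(E_M^c)\le \PP(M_p(\mu_m)>M/2)+\PP(M_p(\nu_n)>M/2)\le 2(K_\mu+K_\nu)/M\) by Markov's inequality, since \(\E M_p(\mu_m)=M_p(\mu)\le C K_\mu\), using \(q_\mu>p\). This bound is uniform over the class. On \(E_M\), the envelope constant \(L_M\) from Lemmas~\ref{lem:growth} and~\ref{lem:gaussian-smooth} depends on the laws only through \(M_p(\mu),M_p(\nu)\), which are bounded by \(CK_\mu, CK_\nu\). Lemma~\ref{lem:shell-bound} then bounds the expected suprema by \(C L_M K_\mu\rho_{q_\mu,p,d}(m)\) and \(C L_M K_\nu\rho_{q_\nu,p,d}(n)\), with constants depending on \(\mu\) only through \(M_{q_\mu}(\mu)\). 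Here I must check that the shell-bound constants depend only on \(M_q\); they do, since they enter only through \(\mu_j\le CM_q r_j^{-q}\) and the tail bound. Markov's inequality on \(E_M\) then gives
\[
\PP(\,\cdot\,\ge\theta_{m,n})\le 2(K_\mu+K_\nu)/M+C_{M,K}\,r_{m,n}/\theta_{m,n}.
\]
Choosing \(M\) large and then letting \(m,n\to\infty\) yields the uniform convergence, and hence power tending to one uniformly.

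Finally, if \(\theta_{m,n}\to0\) and \(D_p^{(\sigma)}(\mu,\nu)>0\) is fixed, then eventually \(D_p^{(\sigma)}(\mu,\nu)^p\ge2\theta_{m,n}\). The single alternative belongs to a moment class with \(K_\mu=M_{q_\mu}(\mu)\) and \(K_\nu=M_{q_\nu}(\nu)\), so the previous paragraph applies.
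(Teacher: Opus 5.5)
Your proposal is correct and follows essentially the same route as the paper. Both reduce size and power to the event \(\{|T_p^{(\sigma)}(\mu_m,\nu_n)-T_p^{(\sigma)}(\mu,\nu)|\ge\theta_{m,n}\}\), and both recover uniformity over the moment balls by noting that the Markov bound on the localization event \(E_M\) and the constants in Lemmas~\ref{lem:growth}, \ref{lem:gaussian-smooth} and~\ref{lem:shell-bound} depend on the laws only through their moment bounds; your explicit estimate \(C(K_\mu+K_\nu)/M+C_{M,K}\,r_{m,n}/\theta_{m,n}\) is just a more quantitative version of the paper's claim that the error is \(o_{\PP}(\theta_{m,n})\) uniformly.
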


\begin{proof}
The null statement follows from Theorem~\ref{thm:main-prob} because \(D_p^{(\sigma)}(\mu,\mu)^p=0\).  Under the displayed alternatives, the triangle inequality applied to the cost error yields
\[
\widehat D_{m,n}^p
\ge D_p^{(\sigma)}(\mu,\nu)^p
-
\bigl|T_p^{(\sigma)}(\mu_m,\nu_n)-T_p^{(\sigma)}(\mu,\nu)\bigr|,
\]
and the error is \(o_{\PP}(\theta_{m,n})\), uniformly over the displayed moment balls.  Indeed, the proof of Theorem~\ref{thm:main-prob} is uniform when \(M_{q_\mu}(\mu)\le K_\mu\) and \(M_{q_\nu}(\nu)\le K_\nu\): the deterministic shell bounds in Lemma~\ref{lem:shell-bound} depend on the laws only through these moment bounds, and the localization event satisfies
\[
\sup_{M_{q_\mu}(\mu)\le K_\mu}
\PP\{M_p(\mu_m)>M\}
\le M^{-1}\sup_{M_{q_\mu}(\mu)\le K_\mu}M_p(\mu)
\le C M^{-1}K_\mu,
\]
with an identical estimate on the \(\nu\)-side.  These bounds make the constants in Theorem~\ref{thm:main-prob} uniform over the alternatives in the corollary.
The final statement for fixed alternatives follows because \(D_p^{(\sigma)}(\mu,\nu)^p>0\) and \(\theta_{m,n}\to0\) imply \(D_p^{(\sigma)}(\mu,\nu)^p\ge2\theta_{m,n}\) eventually, while the finite moments place the fixed alternative in some moment ball.
\end{proof}

\begin{remark}[Dependence on the smoothing bandwidth]\label{rem:sigma-dependence}
The constants in the preceding rate bounds are stated for fixed \(\sigma>0\).  Their dependence on \(\sigma\) can be read from the proof. Let \(s=\lfloor d/2\rfloor+1\) be the differentiability order used in the shell bracketing argument.  If the laws are restricted to moment balls
\[
M_{q_\mu}(\mu)\le K_\mu,\qquad M_{q_\nu}(\nu)\le K_\nu,
\]
then there exists a constant \(C<\infty\), depending only on \(p,d,q_\mu,q_\nu,K_\mu,K_\nu\) and on the confidence level in probability statements, such that the constants in the cost bounds of Theorems~\ref{thm:main-prob} and~\ref{thm:mean} may be chosen no larger than
\[
C(1+\sigma^p)^2(1+\sigma^{-s}).
\]
Indeed, smoothing changes \(p\)-moments by at most a factor of order \(1+\sigma^p\):
\[
M_p(\mu^\sigma)\le C_p(1+\sigma^p)M_p(\mu),
\]
after adjusting the constant already included in the definition of \(M_p(\mu)\).  Lemma~\ref{lem:growth} then gives order-\(p\) envelopes for the relevant Kantorovich potentials with this same polynomial dependence on \(\sigma\).  The additional factor comes from Gaussian differentiation:
\[
\int_{\R^d}(1+|z|^p)|D^a\varphi_\sigma(z)|\,dz
\le C_{p,d,a}(1+\sigma^p)\sigma^{-|a|},
\qquad |a|\le s,
\]
where \(\varphi_\sigma\) is the \(N(0,\sigma^2I_d)\) density.  These two estimates yield the displayed bound for the radius of the shell class, and the maximal inequalities are linear in that radius.

For the distance bounds under separation, this cost constant is multiplied by the local Lipschitz constant of \(t\mapsto t^{1/p}\) at \(T_p^{(\sigma)}(\mu,\nu)=\Delta_\sigma^p\), which is of order \(\Delta_\sigma^{1-p}\) for \(p>1\).  No statement uniform in \(\sigma\) is implied unless the smoothed separation \(\Delta_\sigma\) is bounded away from zero.  The bound above is not optimized; it records only the dependence on the fixed bandwidth used by the proof and explains why a regime with vanishing smoothing requires a separate bias-variance analysis.
\end{remark}

\section{Separated inference under high moment assumptions}
\label{sec:sep}
The preceding rate theorem gives only the size of the two-sample error.  At separated alternatives a sharper statement is available when the moment order is beyond the shell threshold.  The condition \(q>d+2p\) makes the shell entropy integral finite.  This yields a Donsker theorem for the localized smoothed dual classes.  Combined with uniqueness and \(L_2\)-stability of the empirical smoothed potentials, it yields the first-order expansion needed for a central limit theorem.

We keep the restriction \(p>1\).  For \(p=1\), optimal Lipschitz potentials need not be unique, even after smoothing, and the limit theory is typically expressed through a set of optimal potentials; see, for instance, \citep{SadhuGoldfeldKato2021}.  Directional delta method arguments for non-smooth optimal transport statistics are discussed in \citep{SommerfeldMunk2018,FangSantos2019}.  The result below is complementary to the full smooth Wasserstein limit theory in \citep{GoldfeldKatoNietertRioux2024} and is specialized to fixed separated alternatives and scalar CLTs under the moment condition \(q_\mu,q_\nu>d+2p\).

\subsection{Potential uniqueness and stability}
The limit theory uses two external facts about Kantorovich potentials.  They are not new results of this paper, but we state the particular forms needed here so that the subsequent verification for Gaussian-smoothed laws is explicit.

The first input is the connected-support uniqueness result from \citep[Theorem~2]{StaudtHundrieserMunk2025}.

\begin{lemma}[Connected-support uniqueness]\label{lem:source-connected-uniqueness}
 Let \(\mu,\nu\in\cP_p(\R^d)\), and let \(c(x,y)=h(x-y)\), where \(h\) is locally Lipschitz, convex, superlinear, and differentiable.  Suppose that the interior of \(\operatorname{supp}(\mu)\) is connected, the boundary of \(\operatorname{supp}(\mu)\) is negligible for \(\mu\), and the induced-irregularity set of the cost is contained in this boundary.  Then first Kantorovich potentials for the cost \(c\) from \(\mu\) to \(\nu\) are unique \(\mu\)-a.s. up to additive constants.  If the corresponding \(c\)-conjugate pairs are used, the second potentials are unique \(\nu\)-a.s. up to the opposite additive constants.
\end{lemma}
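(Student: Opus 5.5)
This lemma is imported from \citep[Theorem~2]{StaudtHundrieserMunk2025}, so the plan is to check that the hypotheses match those of the cited theorem and then reconstruct the standard argument. That argument has three parts: differentiability of \(c\)-concave potentials, identification of their gradients on a set of full \(\mu\)-measure, and integration along a connected open set.

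First, we fix an optimal coupling \(\pi\in\Pi(\mu,\nu)\) and two \(c\)-conjugate optimal pairs \((\phi_1,\psi_1)\) and \((\phi_2,\psi_2)\). Since \(h\) is locally Lipschitz and the cost is finite on \(\R^d\times\R^d\), each \(c\)-concave \(\phi_i\) is locally Lipschitz on the interior \(U\) of \(\operatorname{supp}(\mu)\). By Rademacher's theorem it is then differentiable Lebesgue-a.e. on \(U\). Complementary slackness gives \(\phi_i(x)+\psi_i(y)=h(x-y)\) for \(\pi\)-a.e. \((x,y)\), so \(x\mapsto h(x-y)-\phi_i(x)\) is minimized at such \(x\).

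Second, at every point \(x\) where \(\phi_i\) is differentiable and \(h\) is differentiable at \(x-y\), the first-order condition reads \(\nabla\phi_i(x)=\nabla h(x-y)\). The right-hand side does not depend on \(i\), so \(\nabla\phi_1=\nabla\phi_2\) at all such \(x\). The role of the induced-irregularity hypothesis is to ensure that this identification holds at every point of \(U\) off a Lebesgue-null set, and not merely on a \(\mu\)-full set. This is the main obstacle. Here \(\mu\) need not have a density, so a \(\mu\)-full set may be Lebesgue-null and Rademacher gives no direct control. I would follow the cited theorem and use that points outside the irregularity set admit superdifferentials that are determined by the cost, combined with semiconcavity-type local structure of \(\phi_i\) on \(U\). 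The conclusion to aim for is that \(\phi_1-\phi_2\) is locally Lipschitz on \(U\) with vanishing gradient almost everywhere.

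Third, because \(U\) is open and connected, a locally Lipschitz function on \(U\) with a.e.\ vanishing gradient is constant. Hence \(\phi_1-\phi_2\) is constant on \(U\). Since \(\mu(\partial\operatorname{supp}\mu)=0\), this constant agrees \(\mu\)-a.s., which proves uniqueness of the first potential. For the second potential, suppose \(\phi_1=\phi_2+t\) \(\mu\)-a.s. Then \(\psi_i(y)=h(x-y)-\phi_i(x)\) for \(\pi\)-a.e. \((x,y)\), which gives \(\psi_1=\psi_2-t\) \(\nu\)-a.s.

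For the later application to Gaussian-smoothed laws with \(p>1\), the hypotheses reduce to simple checks. The support is all of \(\R^d\), so it is connected with empty boundary. The cost \(h(z)=|z|^p\) is \(C^1\), strictly convex and superlinear, and therefore has empty induced-irregularity set.
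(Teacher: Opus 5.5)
The paper gives no argument of its own for this lemma. It imports Theorem~2 of Staudt, Hundrieser and Munk, and checks the hypotheses only later, in Proposition~\ref{prop:unique-potentials}. To that extent your plan agrees with it. Your reconstruction, however, leaves open exactly the step that carries the content, and the route you sketch for that step would not close it. First, knowing that each \(\phi_i\) has a nonempty \(c\)-superdifferential at \(x\) gives only \(\nabla\phi_i(x)=\nabla h(x-y_i)\). The partners \(y_1\ne y_2\) may differ, so this does not force \(\nabla\phi_1(x)=\nabla\phi_2(x)\). Second, semiconcavity is not available under the stated hypotheses. Already for \(h(z)=|z|^p\) with \(1<p<2\) and \(\nu=\delta_{y_0}\), the potential \(|x-y_0|^p\) is not semiconcave at \(y_0\), and \(y_0\) may lie in \(U\). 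What you actually need is a single partner \(y\), shared by both dual pairs, at every point of \(U\), not just at \(\mu\)-a.e.\ point.

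\emph{The missing step.} The standard device, as in the classical compactly supported case, is to work on the closed set \(\operatorname{supp}\pi\) of one fixed optimal plan.
\begin{enumerate}[label=(\roman*)]
\item \emph{Equality on all of \(\operatorname{supp}\pi\).} \(c\)-conjugate potentials are upper semicontinuous, being infima of continuous functions. So \(\{\phi_i(x)+\psi_i(y)\ge h(x-y)\}\) is closed and contains a \(\pi\)-full set. It therefore contains \(\operatorname{supp}\pi\), and equality holds there simultaneously for \(i=1,2\).
\item \emph{Every \(x\in U\) has a partner.} The first projection of \(\operatorname{supp}\pi\) has full \(\mu\)-measure, so it is dense in \(\operatorname{supp}\mu\). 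For \(x\in U\), choose \((x_k,y_k)\in\operatorname{supp}\pi\) with \(x_k\to x\).
\item \emph{The partners stay bounded.} Test the conjugacy inequality for \(\psi_1(y_k)\) at \(x'=x_k+\eps(y_k-x_k)/|y_k-x_k|\). This gives \(h(z_k)-h(u_k)\le 2\sup_K|\phi_1|\), where \(z_k=x_k-y_k\), \(u_k=z_k(1-\eps/|z_k|)\), and \(K\subset U\) is a compact neighbourhood of \(x\). Convexity gives the reverse bound \(h(z_k)-h(u_k)\ge\eps\{h(u_k)-h(0)\}/|u_k|\). By superlinearity this tends to infinity as \(|y_k|\to\infty\), so \((y_k)\) is bounded.
\item \emph{Gradients agree.} Any limit point \(y\) satisfies \((x,y)\in\operatorname{supp}\pi\). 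At every \(x\in U\) where both \(\phi_i\) are differentiable, \(\nabla\phi_1(x)=\nabla h(x-y)=\nabla\phi_2(x)\). By Rademacher these points form a Lebesgue-full subset of \(U\), whatever \(\mu\) is. Your connectedness step then finishes the proof.
\end{enumerate}

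\emph{Regularity in step 1.} Your claim that \(\phi_i\) is locally Lipschitz on \(U\) merely because \(h\) is locally Lipschitz is also insufficient on \(\R^d\). The local Lipschitz constants of \(h(\cdot-y)-\psi_i(y)\) grow without bound in \(y\), so one must first confine near-minimizers to bounded sets. That again uses superlinearity together with local finiteness of \(\phi_i\). In the paper's usage (see the proof of Proposition~\ref{prop:unique-potentials}), this interior regularity is what the irregularity hypothesis supplies; it is not what identifies the gradients.
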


\begin{proposition}[Uniqueness of normalized potentials after smoothing]\label{prop:unique-potentials}
Let \(p>1\), \(\sigma>0\), and \(\mu,\nu\in\cP_p(\R^d)\).  Then \(\mu^\sigma\) and \(\nu^\sigma\) have strictly positive \(C^\infty\) densities on \(\R^d\).  For the cost \(c_p(x,y)=|x-y|^p\), the Kantorovich potentials for \(W_p(\mu^\sigma,\nu^\sigma)^p \) are unique up to additive constants.  After any fixed normalization, this determines a unique potential pair \((\phi_{\sigma},\psi_{\sigma})\) and unique convolved dual functions
\[
h_{\sigma}(x)=\int \phi_{\sigma}(x+z)\,\gamma_\sigma(dz),
\qquad
k_{\sigma}(y)=\int \psi_{\sigma}(y+z)\,\gamma_\sigma(dz),
\]
up to the corresponding centering constants.
\end{proposition}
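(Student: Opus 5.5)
The plan has three steps: establish positivity and smoothness of the densities, verify the hypotheses of Lemma~\ref{lem:source-connected-uniqueness} for \(\mu^\sigma\) and \(c_p\), and then pass from potentials to the convolved functions.

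\emph{Densities.} I first show that \(\mu^\sigma\) has density
\[
f_\mu(x)=\int\varphi_\sigma(x-y)\,\mu(dy).
\]
This function is strictly positive because \(\varphi_\sigma>0\) everywhere and \(\mu\) is a probability measure. It is \(C^\infty\) by differentiation under the integral. That step is legitimate because every derivative \(D^a\varphi_\sigma\) is a polynomial times a Gaussian, hence bounded, so dominated convergence applies. The same argument gives the density of \(\nu^\sigma\). Moreover \(M_p(\mu^\sigma)\le C_{p,\sigma}M_p(\mu)<\infty\), as in the proof of Theorem~\ref{thm:main-prob}, so both smoothed laws lie in \(\cP_p(\R^d)\).

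\emph{Hypotheses of Lemma~\ref{lem:source-connected-uniqueness}.} I apply the lemma with \(h(z)=|z|^p\) and \(p>1\).
\begin{itemize}
\item This \(h\) is convex and superlinear, since \(|z|^p/|z|\to\infty\). It is \(C^1\) with \(\nabla h(z)=p|z|^{p-2}z\), extended by \(0\) at the origin, so it is differentiable and locally Lipschitz.
\item Because \(f_\mu>0\), \(\operatorname{supp}(\mu^\sigma)=\R^d\). Its interior \(\R^d\) is connected and its boundary is empty, hence trivially \(\mu^\sigma\)-negligible.
\end{itemize}
The remaining hypothesis is that the induced-irregularity set of the cost lies in this (empty) boundary. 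This is the step I expect to be the main obstacle, since it depends on the precise definition in \citep{StaudtHundrieserMunk2025}. For a translation-invariant cost given by a globally differentiable \(h\), the cost is differentiable in \(x\) at every point, so the irregularity set should be empty. I would confirm this against the cited definition. If that definition also involves local semiconcavity, I would use that \(|x-y|^p\) is locally semiconcave when \(p\ge2\), and handle \(1<p<2\) by noting that the relevant condition there is only differentiability of \(h\). Granting this, the lemma yields uniqueness of the first potential \(\mu^\sigma\)-a.s. up to an additive constant. Since \(\mu^\sigma\) is equivalent to Lebesgue measure, this means uniqueness Lebesgue-a.e. The \(c_p\)-conjugate second potential is then unique up to the opposite constant.

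\emph{Convolved functions.} Fix a normalization, for example \(\int\phi\,d\mu^\sigma=0\), or the representative supplied by Lemma~\ref{lem:growth}. This determines \((\phi_\sigma,\psi_\sigma)\) up to Lebesgue-null modifications. Such modifications do not change
\[
h_\sigma(x)=\int\phi_\sigma(x+z)\,\gamma_\sigma(dz),
\]
because \(\gamma_\sigma\) is absolutely continuous. This integral is finite by the polynomial growth bound of Lemma~\ref{lem:growth}. So \(h_\sigma\) is defined pointwise for every \(x\), and likewise \(k_\sigma\). A change of additive constant in \(\phi_\sigma\) shifts \(h_\sigma\) by the same constant, and \(k_\sigma\) by the opposite one. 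This gives uniqueness of the convolved dual functions up to the corresponding centering constants.
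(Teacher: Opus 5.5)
Your proposal is correct and follows essentially the same route as the paper. Both arguments use positivity and smoothness of the Gaussian-convolved densities to get full, connected support with empty boundary. Both then apply Lemma~\ref{lem:source-connected-uniqueness} to \(h(z)=|z|^p\); the paper settles the irregularity hypothesis you flagged by citing directly the Euclidean result in \citep{StaudtHundrieserMunk2025} for locally Lipschitz, convex, superlinear, differentiable \(h\), which are exactly the properties you check. Both finish by noting that Gaussian convolution ignores Lebesgue-null modifications and carries additive constants through. The only difference is that the paper also upgrades a.e. uniqueness to pointwise uniqueness of the \(c_p\)-conjugate representatives via their local Lipschitz continuity; your argument does not need this for \(h_\sigma,k_\sigma\).
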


\begin{proof}
The Gaussian convolution yields strictly positive \(C^\infty\) densities on \(\R^d\).  Hence both smoothed laws have support \(\R^d\), whose interior is connected and whose boundary is empty.

We apply Lemma~\ref{lem:source-connected-uniqueness}.  The required regularity condition is verified by the Euclidean interior regularity result for costs \(c(x,y)=h(x-y)\) in \citep{StaudtHundrieserMunk2025}: for \(h(z)=|z|^p\), \(p>1\), the function \(h\) is locally Lipschitz, convex, superlinear, and differentiable, so the set of induced irregularity is contained in the boundary of the source support.  Since that boundary is empty here, the connected support theorem applies to the pair \((\mu^\sigma,\nu^\sigma)\).  The first Kantorovich potentials are unique \(\mu^\sigma\)-a.s. up to additive constants.  Because \(\mu^\sigma\) has a strictly positive density, this is equivalent to Lebesgue-a.e. uniqueness on \(\R^d\).  The potentials may be chosen \(c_p\)-conjugate.  Such representatives are locally Lipschitz on \(\R^d\) for the power cost by the local regularity cited above.  Equality Lebesgue-a.e. of two normalized first potentials then implies equality everywhere, since their difference is continuous.  The \(c_p\)-conjugacy relation identifies the second potential everywhere up to the opposite constant:
\[
\psi(y)=\inf_x\{|x-y|^p-\phi(x)\}.
\]
Convolution preserves additive constants and depends only on the Lebesgue-a.e. value of the integrand against the Gaussian density. Hence the centered convolved dual functions are unique.
\end{proof}

Let \((\phi_\sigma,\psi_\sigma)\) be an optimal pair for \((\mu^\sigma,\nu^\sigma)\).  We choose its additive normalization so that
\[
\mu h_\sigma=0,
\quad \text{where} \quad
h_\sigma(x)=\int\phi_\sigma(x+z)\,\gamma_\sigma(dz),
\qquad
k_\sigma(y)=\int\psi_\sigma(y+z)\,\gamma_\sigma(dz).
\]
The influence functions are then
\[
\bar h_\sigma=h_\sigma,
\qquad
\bar k_\sigma=k_\sigma-\nu k_\sigma .
\]
Adding constants to a feasible dual pair does not change either the dual objective or the increments of the empirical processes, so this normalization is only a device for selecting one representative.  By Lemmas~\ref{lem:growth} and~\ref{lem:gaussian-smooth}, \(h_\sigma,k_\sigma,\bar h_\sigma\), and \(\bar k_\sigma\) have polynomial envelope of order \(p\).

For \(L\ge1\), let \(\mathcal A_L\) denote the collection of normalized pairs \((h,k)\) obtained by convolving feasible \(c_p\)-conjugate pairs \((\phi,\psi)\) satisfying the polynomial envelope in Lemma~\ref{lem:growth} with constant at most \(L\), and then shifting the pair so that \(\mu h=0\).  The fixed normalization removes the ambiguity from additive constants and places the class in \(L_2(\mu)\oplus L_2(\nu)\); the value of \(\mu h+\nu k\) and of \(\mu_m h+\nu_n k\) is unchanged by this shift.

\begin{lemma}[Donsker property of localized smoothed dual classes]\label{lem:localized-donsker}
Let \(p\ge1\), \(\sigma>0\), and assume \(M_q(\mu)<\infty\) for some \(q>d+2p\).  For every fixed \(L\), any class generated in one coordinate as above by convolved potentials with envelope constant at most \(L\) is \(\mu\)-Donsker, meaning that the empirical process indexed by this class converges weakly to a tight Gaussian process in \(\ell^\infty\) of the class, and is totally bounded in \(L_2(\mu)\).  The class of pairs \(\mathcal A_L\) is Donsker and totally bounded in \(L_2(\mu)\oplus L_2(\nu)\) whenever \(M_{q_\mu}(\mu)+M_{q_\nu}(\nu)<\infty\) with \(q_\mu,q_\nu>d+2p\).
\end{lemma}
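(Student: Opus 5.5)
The plan is to show that the one-coordinate class \(\mathcal F\) of convolved potentials with envelope constant at most \(L\) is \(\mu\)-Donsker by bounding its global \(L_2(\mu)\)-bracketing entropy integral. By Lemma~\ref{lem:gaussian-smooth}, every \(h\in\mathcal F\) lies in a shell class \(\cH_{L'}\) with \(L'=C_{p,d,s,\sigma}L\). It therefore suffices to show \(J_{[]}(\infty,\cH_{L'},L_2(\mu))<\infty\), together with a square-integrable envelope. Then \citep[Theorem~2.5.6]{vdVW1996} gives the Donsker property, and finiteness of the bracketing numbers gives total boundedness in \(L_2(\mu)\). The envelope is \(F(x)=CL'(1+|x|^p)\), and \(\mu F^2<\infty\) because \(q>d+2p\ge 2p\).

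The bracketing integral is controlled through the shell construction from the proof of Lemma~\ref{lem:shell-bound}. Given \(\eps>0\), choose a cutoff shell \(J(\eps)\) such that \(\|F\ind_{B_{2^J}^c}\|_{L_2(\mu)}\le\eps/2\). This is possible since \(\|F\ind_{B_R^c}\|_{L_2(\mu)}\le CL'M_q(\mu)^{1/2}R^{(2p-q)/2}\). On each shell \(j\le J\), take brackets of \(L_2(\mu)\)-width \(\eps_j\). Glue them into a global bracket, using \(\pm F\) outside \(B_{2^J}\). The squared width is then at most \(\sum_j\eps_j^2+\eps^2/4\), and the log bracketing number is at most \(\sum_j\log N_{[]}(\eps_j,\cH_{L'}\ind_{A_j},L_2(\mu))\).

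The widths are allocated with weights \(\eps_j=\eps\, w_j\), where \(w_j\propto r_j^{-\alpha}\) for a small \(\alpha>0\) and \(\sum w_j^2\le 1/4\). By the shell entropy bound \(\log N\le C r_j^d(L'r_j^p\mu_j^{1/2}/\eps_j)^{d/s}\), this gives \(\log N_{[]}(\eps,\mathcal F,L_2(\mu))\le C\eps^{-d/s}\sum_j r_j^{d+(p+\alpha-q/2)d/s}\). Since \(d/s<2\), the \(\eps\)-power is integrable, and the remaining task is to check that the shell sum converges. Here \(d/s\) can be pushed arbitrarily close to 2 only in the limit, so with \(d/s<2\) fixed the exponent is \(d+(p-q/2)d/s+\alpha d/s\). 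For \(d/s\) near 2 this is close to \(d+2p-q<0\). For smaller \(d/s\) I expect to increase the smoothness \(s\): Lemma~\ref{lem:gaussian-smooth} holds for every \(k\), so replacing \(s\) by a large \(k\) makes \(d/k\) small. The exponent \(d+(p-q/2)d/k\) then stays positive, however, so this must be arranged carefully.

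That summability of the entropy across infinitely many shells is the main obstacle. The robust fallback is to avoid global entropy and instead verify asymptotic equicontinuity directly. The per-shell bound \(J_{[]}(A_j)\le CL'r_j^{p+d/2}\mu_j^{1/2}\le CL'M_q^{1/2}r_j^{p+d/2-q/2}\) from Lemma~\ref{lem:shell-bound} is summable exactly when \(q>d+2p\). Decompose \(\mathcal F=\sum_j\mathcal F_j\). Each \(\mathcal F_j\) is Donsker with its modulus controlled by \citep[Lemma~3.4.2]{vdVW1996}, and summability gives uniform smallness of the tail \(\sum_{j>J}\E^*\|\mathbb G_N\|_{\mathcal F_j}\). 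Together these yield asymptotic equicontinuity of the sum. Total boundedness in \(L_2(\mu)\) follows from the same truncation, since each finite piece is totally bounded and the tail is uniformly small. Finally, for the pair class \(\mathcal A_L\), the normalization \(\mu h=0\) only subtracts a bounded constant, and both coordinates are Donsker. The independence of the two samples then gives joint weak convergence of \((\mathbb G_m^\mu,\mathbb G_n^\nu)\) on \(\mathcal A_L\), and total boundedness in \(L_2(\mu)\oplus L_2(\nu)\) follows coordinatewise.
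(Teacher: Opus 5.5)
Your final argument (the fallback) is correct, but it takes a different route from the paper. The route you abandoned is essentially the paper's. The paper glues shell brackets with widths added in squares and asserts that the glued class on \(B_{2^J}\) has bracketing integral at most \(C\sum_{j\le J}J_{[]}(A_j)\). It then appends the tail bracket \([-F\ind_{B_{2^J}^c},F\ind_{B_{2^J}^c}]\), obtains a finite global bracketing integral, and concludes via \citep[Theorem~2.5.6]{vdVW1996}; total boundedness is then immediate.

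The obstacle you met on that route is not real. With your own \(\eps\)-dependent cutoff, only shells with \(r_j\le r_{J(\eps)}\asymp\eps^{-2/(q-2p)}\) enter. So when \(\kappa=d+(p+\alpha-q/2)d/s>0\), your bound gives \(\log N_{[]}(\eps,\mathcal F,L_2(\mu))\lesssim\eps^{-d/s}r_{J(\eps)}^{\kappa}\asymp\eps^{-2(d+\alpha d/s)/(q-2p)}\). Its square root is integrable at \(0\) for small \(\alpha\) exactly because \(q>d+2p\). If you prefer to sum over all shells, weaken the exponent \(d/s\) to any \(V\in[d/s,2)\) near \(2\). This is legitimate since \((M/\delta)^{d/s}\le(M/\delta)^V\) for \(\delta\le M\), and the shell exponent then approaches \(d+2p-q<0\). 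Raising \(s\) goes the wrong way.

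Your fallback proves asymptotic equicontinuity directly. The class truncated to \(B_{2^J}\) is Donsker as a finite sum of Donsker shell classes, and the tail is bounded by \(\sum_{j>J}\E^*\|\mathbb G_N\|_{\mathcal F_j}\). This needs no width-allocation inequality and makes transparent that only \(\sum_jJ_{[]}(A_j)<\infty\) is used. The paper's route, by contrast, yields one global entropy bound from which the CLT and total boundedness follow in a single citation.

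For the tail, use the envelope form of the bracketing maximal inequality \citep[Theorem~2.14.2]{vdVW1996}: \(\E^*\|\mathbb G_N\|_{\mathcal F_j}\lesssim J_{[]}(1,\mathcal F_j,L_2(\mu))\|F_j\|_{\mu,2}\lesssim J_{[]}(A_j)\), uniformly in \(N\). By contrast, \citep[Lemma~3.4.2]{vdVW1996} with \(\delta=\|F_j\|_{\mu,2}\) adds a term of order \(L'r_j^{p+d}N^{-1/2}\), which is not summable in \(j\).
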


\begin{proof}
We work with pointwise measurable separable versions of the classes; equivalently, all statements about empirical processes may be read in outer probability.  By Lemmas~\ref{lem:growth} and~\ref{lem:gaussian-smooth}, every class in one coordinate is contained in a shell class \(\cH_{C L}\) with envelope \(F(x)=CL(1+|x|^p)\).  Since \(q>d+2p\), we have \(F\in L_2(\mu)\).

On each shell \(A_j\), the calculation in Lemma~\ref{lem:shell-bound} yields the bracketing integral
\[
J_{[]}(A_j)\le C L r_j^{p+d/2}\mu(A_j)^{1/2}.
\]
For finitely many shells \(A_0,\ldots,A_J\), brackets on the shells can be added: if \([l_j,u_j]\) brackets \(h\ind_{A_j}\), then
\[
\left[\sum_{j=0}^J l_j,\sum_{j=0}^J u_j\right]
\]
brackets \(h\ind_{B_{2^J}}\), and its \(L_2(\mu)\)-width is at most \((\sum_j\|u_j-l_j\|_{\mu,2}^2)^{1/2}\).  Allocating the shell widths so that the squared widths sum to the desired global width gives
\[
J_{[]}(\|F\ind_{B_{2^J}}\|_{\mu,2},\cH_{CL}\ind_{B_{2^J}},L_2(\mu))
\le C\sum_{j=0}^J J_{[]}(A_j).
\]
The moment bound \(\mu(A_j)\le C M_q(\mu)r_j^{-q}\) together with \(q>d+2p\) implies 
\[
\sum_{j\ge0}J_{[]}(A_j)
\le C L M_q(\mu)^{1/2}
\sum_{j\ge0}r_j^{p+d/2-q/2}<\infty .
\]
Since \(F\in L_2(\mu)\), choose \(J\) large enough that
\[
\|F\ind_{B_{2^J}^c}\|_{L_2(\mu)}<\varepsilon .
\]
The single tail bracket \([-F\ind_{B_{2^J}^c},F\ind_{B_{2^J}^c}]\) then has \(L_2(\mu)\)-width less than \(2\varepsilon\), while the truncated class has finite bracketing integral uniformly in \(J\).  Letting \(\varepsilon\downarrow0\) yields a finite global bracketing integral for the full class.

The bracketing central limit theorem for empirical processes \citep[Theorem~2.5.6]{vdVW1996} yields the \(\mu\)-Donsker property.  Finite bracketing numbers at every positive \(L_2(\mu)\)-radius also entail total boundedness in \(L_2(\mu)\).  The statement for pairs follows by applying the result for one coordinate to the direct sum metric
\[
\|(h,k)\|_{L_2(\mu)\oplus L_2(\nu)}
=\{\|h\|_{L_2(\mu)}^2+\|k\|_{L_2(\nu)}^2\}^{1/2}
\]
and using the independence of the two empirical processes.
\end{proof}

Define
\[
\mathbb M(h,k)=\mu h+\nu k,
\qquad
\mathbb M_{m,n}(h,k)=\mu_m h+\nu_n k.
\]

\begin{lemma}[Localized equicontinuity for random dual pairs]\label{lem:localized-equicont}
Assume \(m/(m+n)\to\lambda\in(0,1)\), put \(N_{m,n}=mn/(m+n)\), and let
\[
d_2\{(h,k),(h',k')\}
=\|(h-h',k-k')\|_{L_2(\mu)\oplus L_2(\nu)} .
\]
Let \((H_{m,n},K_{m,n})\) be random pairs and let \((h_0,k_0)\) be fixed.  Assume that, for every \(\eta>0\), there exists \(L<\infty\) such that, for all sufficiently large \(m,n\),
\[
\PP\{(H_{m,n},K_{m,n})\in\mathcal A_L,\ (h_0,k_0)\in\mathcal A_L\}\ge1-\eta,
\]
and suppose that
\[
d_2\{(H_{m,n},K_{m,n}),(h_0,k_0)\}\to_{\PP}0 .
\]
Then
\[
N_{m,n}^{1/2}\Bigl[(\mathbb M_{m,n}-\mathbb M)(H_{m,n},K_{m,n})
-(\mathbb M_{m,n}-\mathbb M)(h_0,k_0)\Bigr]=o_{\PP}(1).
\]
\end{lemma}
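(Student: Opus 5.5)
The plan is to reduce the random-index statement to asymptotic equicontinuity of a fixed Donsker class, using Lemma~\ref{lem:localized-donsker}. Note that \(N_{m,n}^{1/2}(\mathbb M_{m,n}-\mathbb M)(h,k)=(N_{m,n}/m)^{1/2}\mathbb G_m^\mu h+(N_{m,n}/n)^{1/2}\mathbb G_n^\nu k\). Here \(\mathbb G_m^\mu=m^{1/2}(\mu_m-\mu)\) and \(\mathbb G_n^\nu=n^{1/2}(\nu_n-\nu)\), and \(N_{m,n}/m\to1-\lambda\), \(N_{m,n}/n\to\lambda\) are bounded. It therefore suffices to treat each coordinate separately: show that \(\mathbb G_m^\mu(H_{m,n}-h_0)=o_{\PP}(1)\), and similarly for \(\nu\) with \(K_{m,n}-k_0\).

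Fix \(\eta,\varepsilon>0\) and choose \(L\) as in the hypothesis, so that the event \(G_{m,n}=\{(H_{m,n},K_{m,n})\in\mathcal A_L,\ (h_0,k_0)\in\mathcal A_L\}\) has probability at least \(1-\eta\) eventually. Let \(\mathcal F_L^\mu\) be the first-coordinate projection of \(\mathcal A_L\). By Lemma~\ref{lem:localized-donsker}, \(\mathcal F_L^\mu\) is \(\mu\)-Donsker. The key point is that the normalization \(\mu h=0\) makes the \(L_2(\mu)\) norm agree with the standard deviation semimetric \(\rho_\mu(f)=\{\mu(f-\mu f)^2\}^{1/2}\le\|f\|_{L_2(\mu)}\), so \(L_2\)-closeness controls the variance metric. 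The Donsker property is equivalent to total boundedness plus asymptotic equicontinuity \citep[Section 2.1.2]{vdVW1996}. Hence for every \(\varepsilon\) there is \(\delta>0\) with
\[
\limsup_{m}\PP^*\Bigl(\sup_{f,g\in\mathcal F_L^\mu,\ \|f-g\|_{L_2(\mu)}<\delta}|\mathbb G_m^\mu(f-g)|>\varepsilon\Bigr)<\eta .
\]
On \(G_{m,n}\cap\{d_2((H_{m,n},K_{m,n}),(h_0,k_0))<\delta\}\), the pair \(f=H_{m,n}\), \(g=h_0\) is admissible in this supremum. The complement of that intersection has probability at most \(2\eta\) eventually, by the hypothesis and by \(d_2\to_\PP0\). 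Thus \(\PP^*(|\mathbb G_m^\mu(H_{m,n}-h_0)|>\varepsilon)\le3\eta\) eventually. The \(\nu\)-side is identical, except that the class \(\{k:(h,k)\in\mathcal A_L\}\) is not centered. Since \(\rho_\nu\le\|\cdot\|_{L_2(\nu)}\) still holds, the same argument applies.

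The main obstacle is that the index pairs are random and depend on the same sample, so no conditioning argument is available. This is handled entirely by the supremum over the fixed deterministic class \(\mathcal F_L^\mu\): we never need independence between \((H_{m,n},K_{m,n})\) and the empirical process, only membership in \(\mathcal A_L\) with high probability. A secondary point is measurability, and we read all probabilities as outer probabilities, as the paper stipulates. Finally, combining the two coordinates with the bounded weights \((N_{m,n}/m)^{1/2}\) and \((N_{m,n}/n)^{1/2}\) gives the claim.
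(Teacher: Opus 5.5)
Your proposal is correct and follows essentially the same route as the paper. Both arguments split the two-sample process into the weighted one-sample processes, invoke asymptotic equicontinuity of the fixed Donsker classes from Lemma~\ref{lem:localized-donsker}, and intersect the localization event with the event that \(d_2\) is small. Your remark that \(\rho_\mu(f-g)\le\|f-g\|_{L_2(\mu)}\), together with the explicit choice of \(L\) before \(\delta\), makes precise a step the paper leaves implicit.
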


\begin{proof}
For fixed \(L\), Lemma~\ref{lem:localized-donsker} gives asymptotic equicontinuity of the one-sample empirical processes on the two coordinate classes.  Since
\[
N_{m,n}^{1/2}(\mathbb M_{m,n}-\mathbb M)(h,k)
=\left(\frac{N_{m,n}}{m}\right)^{1/2}\mathbb G_m^\mu h
+\left(\frac{N_{m,n}}{n}\right)^{1/2}\mathbb G_n^\nu k,
\]
where \(\mathbb G_m^\mu=m^{1/2}(\mu_m-\mu)\) and \(\mathbb G_n^\nu=n^{1/2}(\nu_n-\nu)\), and \(N_{m,n}/m\to1-\lambda\), \(N_{m,n}/n\to\lambda\), the two-sample process is asymptotically equicontinuous on \(\mathcal A_L\) under the metric \(d_2\).  Hence, for every \(\varepsilon>0\),
\[
\lim_{\delta\downarrow0}\limsup_{m,n}
\PP\left\{
\sup_{\substack{a,b\in\mathcal A_L\\ d_2(a,b)<\delta}}
N_{m,n}^{1/2}|(\mathbb M_{m,n}-\mathbb M)(a)-(\mathbb M_{m,n}-\mathbb M)(b)|>\varepsilon
\right\}=0 .
\]
Intersecting this event with the localization event above, whose probability is high, and using the assumed \(d_2\)-convergence proves the claim. Letting \(\eta\downarrow0\) completes the proof.
\end{proof}

The second input controls how optimal potentials move when both marginals vary.  This is the step that allows the random empirical maximizer to be replaced by the fixed population potential in the linear expansion.  We use the following local stability theorem for potentials \citep[Theorem~3.4]{delBarrioGonzalezSanzLoubes2024}.

\begin{lemma}[Local stability of potentials]\label{lem:source-potential-stability}
Let \(c(x,y)=h(x-y)\), where \(h:\R^d\to[0,\infty)\) is differentiable and satisfies the following assumptions: \(h\) is strictly convex; for each height \(r>0\) and angle \(\theta\in(0,\pi)\), sufficiently far from the origin there is a cone of height \(r\) and angle \(\theta\) on which \(h\) attains its maximum at the vertex; and \(h(z)/|z|\to\infty\) as \(|z|\to\infty\).  Let \(\mu_n,\nu_n,\mu,\nu\) be probability laws on \(\R^d\) such that \(\mu_n\Rightarrow \mu\), \(\nu_n\Rightarrow \nu\), \(T_c(\mu_n,\nu_n)<\infty\), and \(T_c(\mu,\nu)<\infty\).  Suppose that \(\mu\) is absolutely continuous with respect to Lebesgue measure and has connected support with negligible boundary.  If \((\phi_n,\psi_n)\) and \((\phi,\psi)\) are \(c\)-conjugate optimal pairs for \((\mu_n,\nu_n)\) and \((\mu,\nu)\), respectively, then there are constants \(a_n\in\R\) such that
\[
\phi_n-a_n\to\phi
\]
locally uniformly on compact subsets of \(\operatorname{supp}(\mu)\).  The analogous assertion holds for the second coordinate after interchanging the two marginals, provided the corresponding limiting source law also has absolutely continuous connected support with negligible boundary.
\end{lemma}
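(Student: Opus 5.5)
Since this is the cited stability theorem of \citep{delBarrioGonzalezSanzLoubes2024}, the plan is to reproduce the standard compactness--identification argument: show that suitably recentred potentials are relatively compact in the topology of locally uniform convergence, show that every subsequential limit is an optimal \(c\)-conjugate potential for \((\mu,\nu)\), and then use uniqueness to pin down the limit. For the normalization, I fix a point \(x_0\) in the interior of \(\operatorname{supp}(\mu)\) and set \(a_n=\phi_n(x_0)-\phi(x_0)\). With this choice, \(\widetilde\phi_n=\phi_n-a_n\) and \(\widetilde\psi_n=\psi_n+a_n\) remain a \(c\)-conjugate optimal pair for \((\mu_n,\nu_n)\).

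\textbf{Step 1: uniform local bounds (main obstacle).} For each compact \(K\subset\operatorname{int}\operatorname{supp}(\mu)\), I aim to show that the functions \(\widetilde\phi_n\) are uniformly bounded and uniformly Lipschitz on \(K\) for large \(n\).
\begin{itemize}[leftmargin=*]
\item Since \(\widetilde\phi_n(x)=\inf_y\{h(x-y)-\widetilde\psi_n(y)\}\), it suffices to show that, for \(x\in K\), the near-minimizing \(y\) stay in a fixed bounded set. The local Lipschitz bound then follows from local Lipschitz continuity of \(h\) on bounded sets.
\item To confine these minimizers, I would argue by contradiction. Suppose \(|y_n|\to\infty\) with \(y_n\) nearly minimizing at some \(x_n\in K\).
\item The cone condition on \(h\) then lets \(h(\cdot-y_n)\) be compared on a whole cone of positive \(\mu\)-mass with vertex near \(x_n\). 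Because \(\mu_n\Rightarrow\mu\) and \(x_n\) lies in the interior of the support, this cone also carries \(\mu_n\)-mass bounded below for large \(n\).
\item Superlinearity of \(h\) together with \(\widetilde\phi_n(x_0)=\phi(x_0)\) should then force \(\widetilde\phi_n\) to be arbitrarily negative on a set of \(\mu_n\)-mass bounded below. That contradicts the uniform bound on the dual value, which comes from \(T_c(\mu_n,\nu_n)\to\) a finite limit.
\end{itemize}
This is the delicate part: it is exactly where the cone hypothesis and the interior position of \(x_0\) are used.

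\textbf{Step 2: compactness and optimality of the limit.} By Arzel\`a--Ascoli and a diagonal argument over an exhaustion of the interior by compacts, every subsequence has a further subsequence along which \(\widetilde\phi_n\to\widetilde\phi\) locally uniformly. Symmetric bounds on the \(\nu\)-side give \(\widetilde\psi_n\to\widetilde\psi\) locally uniformly on compacts where \(\nu\) has mass, and feasibility \(\widetilde\phi+\widetilde\psi\le c\) passes to the limit. To identify the limit pair as optimal, I take optimal plans \(\pi_n\in\Pi(\mu_n,\nu_n)\). These are tight, and by stability of optimal transport plans every weak limit \(\pi\) is optimal for \((\mu,\nu)\). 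Since \(\widetilde\phi_n(x)+\widetilde\psi_n(y)=c(x,y)\) on \(\operatorname{supp}\pi_n\) and supports converge in the Kuratowski sense, the equality \(\widetilde\phi+\widetilde\psi=c\) holds on \(\operatorname{supp}\pi\) intersected with the region of convergence. Replacing \(\widetilde\psi\) by the \(c\)-transform of \(\widetilde\phi\) and then \(\widetilde\phi\) by its double transform yields a \(c\)-conjugate optimal pair that agrees with \(\widetilde\phi\) on \(\operatorname{int}\operatorname{supp}(\mu)\).

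\textbf{Step 3: identification.} By Lemma~\ref{lem:source-connected-uniqueness}, using absolute continuity, connected support, negligible boundary, and the regularity of a strictly convex differentiable \(h\), we get \(\widetilde\phi=\phi+b\) \(\mu\)-a.e. Both functions are continuous on the interior, so the identity holds there everywhere, and evaluating at \(x_0\) gives \(b=0\). Thus every subsequential limit equals \(\phi\), so the whole sequence satisfies \(\phi_n-a_n\to\phi\) locally uniformly on compact subsets of the interior of the support. Extending to compact subsets of \(\operatorname{supp}(\mu)\) reaching the boundary would use continuity of \(c\)-conjugate functions up to the boundary. The second-coordinate assertion follows by exchanging the roles of the marginals.
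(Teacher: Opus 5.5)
The paper does not prove this lemma. It imports it from \citep[Theorem~3.4]{delBarrioGonzalezSanzLoubes2024} and only checks its hypotheses inside Lemma~\ref{lem:l2-potential-stability}. Your compactness--identification outline is a sensible reconstruction, but Step~1, which carries all the weight, fails as written.

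\textbf{The gap in Step 1.} You derive the contradiction from ``a uniform bound on the dual value'' coming from \(T_c(\mu_n,\nu_n)\to\) a finite limit. The hypotheses give nothing of the sort: weak convergence with each \(T_c(\mu_n,\nu_n)<\infty\) yields only \(\liminf_n T_c(\mu_n,\nu_n)\ge T_c(\mu,\nu)\). For instance, take \(\nu_n=\nu\) compactly supported and \(\mu_n=(1-n^{-1})\mu+n^{-1}\delta_{z_n}\) with \(|z_n|\to\infty\) fast enough. Then \(\mu_n\Rightarrow\mu\) but \(T_c(\mu_n,\nu)\to\infty\).

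A bounded dual value would not help either. \(\widetilde\phi_n\) being very negative on a set of \(\mu_n\)-mass bounded below is compatible with \(\int\widetilde\phi_n\,d\mu_n+\int\widetilde\psi_n\,d\nu_n=T_c(\mu_n,\nu_n)\). Nothing bounds \(\int\widetilde\psi_n\,d\nu_n\) (or \(\widetilde\phi_n\) off the cone) from above uniformly in \(n\); in particular \(\widetilde\psi_n(y)\le h(x_0-y)-\phi(x_0)\) is not uniformly \(\nu_n\)-integrable under weak convergence. Moreover, making \(\widetilde\phi_n\) very negative on the cone needs an upper bound on \(\widetilde\phi_n(x_n)\). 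That bound would come from a bounded near-minimizer at \(x_0\), which is exactly what Step~1 is meant to prove, so the argument is circular.

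\textbf{The missing idea: pointwise bounds from the optimal plans \(\pi_n\), not integrated dual values.}

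The plans are tight, and \(\widetilde\phi_n(x)+\widetilde\psi_n(y)=c(x,y)\) on all of \(\operatorname{supp}\pi_n\), since both potentials are upper semicontinuous. For every open \(U\) meeting \(\operatorname{supp}\mu\), \(\liminf_n\pi_n(U\times B_R)\ge\mu(U)-\sup_n\nu_n(B_R^c)>0\) once \(R\) is large.

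Normalize at points \((x_0^n,y_0^n)\in\operatorname{supp}\pi_n\) converging, along a subsequence with \(\pi_n\Rightarrow\pi\), to some \((x_0,y_0)\in\operatorname{supp}\pi\) with \(x_0\) interior, so that \(\widetilde\phi_n(x_0^n)=0\). Then:
\(\widetilde\phi_n(x)\le h(x-y_0^n)-h(x_0^n-y_0^n)\), which bounds \(\widetilde\phi_n\) above on compacts;
\(\widetilde\psi_n(y)\le h(x_0^n-y)\);
hence \(\widetilde\phi_n(x')\ge -h(x_0^n-y')\ge -C\) at support points \((x',y')\) with \(y'\in B_R\).

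Now suppose a near-minimizer \(y\) at \(x\in K\) escaped to infinity, and set \(z=x-y\). Convexity along \([0,z]\) gives \(h(z)-h(z-rz/|z|)\ge (r/|z|)\{h(z)-h(0)\}\to\infty\). Averaging with the cone on which \(h\le h(z)\) spreads this drop over a ball of fixed radius near \(x\). There \(\widetilde\phi_n\le C_K-A_n\) with \(A_n\to\infty\). This contradicts the lower bound at the support points of \(\pi_n\) that must lie in that ball (by a finite-cover argument). This pointwise mechanism is what Step~1 actually requires.

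\textbf{Step 2 needs the same repair on the \(\nu\)-side.} Nothing is assumed about \(\nu\), so there are no ``symmetric bounds'' giving local uniform convergence of \(\widetilde\psi_n\). Instead, pass to the limit in \(\widetilde\phi_n(x)-\widetilde\phi_n(x_n')\le h(x-y_n')-h(x_n'-y_n')\) along \((x_n',y_n')\in\operatorname{supp}\pi_n\to(x',y')\in\operatorname{supp}\pi\). This yields \(\operatorname{supp}\pi\cap(\operatorname{int}\operatorname{supp}\mu\times\R^d)\subset\partial^c\widetilde\phi\) without any convergence of \(\widetilde\psi_n\).

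\textbf{Step 3.} Your extension to compacts reaching the boundary is unsupported. It is harmless here only because the paper applies the lemma with \(\operatorname{supp}\mu^\sigma=\R^d\).
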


\begin{lemma}[\(L_2\)-stability of empirical smoothed potentials]\label{lem:l2-potential-stability}
Let \(p>1\), \(\sigma>0\), and suppose \(M_{q_\mu}(\mu)+M_{q_\nu}(\nu)<\infty \) for some \(q_\mu,q_\nu>d+2p\).  Let \((\widetilde \phi_{m,n},\widetilde \psi_{m,n})\) be any \(c_p\)-conjugate optimal dual pair for \((\mu_m^\sigma,\nu_n^\sigma)\), chosen from the representatives with polynomial growth in Lemma~\ref{lem:growth}, and let \((\widehat \phi_{m,n},\widehat \psi_{m,n})\) be the shift of this pair by opposite constants satisfying
\[
\mu \widehat h_{m,n}=0,\qquad
\widehat h_{m,n}=\widehat\phi_{m,n}*\gamma_\sigma,\qquad
\widehat k_{m,n}=\widehat\psi_{m,n}*\gamma_\sigma .
\]
Then, as \(m,n\to\infty\),
\[
\|(\widehat h_{m,n},\widehat k_{m,n})-(h_\sigma,k_\sigma)\|_{L_2(\mu)\oplus L_2(\nu)}
\to_{\PP}0 .
\]
If the selected optimal pair is not known to be measurable, the convergence is understood in outer probability.
\end{lemma}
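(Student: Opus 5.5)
The argument combines the local stability result of Lemma~\ref{lem:source-potential-stability} with the uniform polynomial envelopes from Lemmas~\ref{lem:growth} and~\ref{lem:gaussian-smooth}. Gaussian tails then upgrade locally uniform convergence of potentials to \(L_2\)-convergence of their convolutions. The argument runs along subsequences: it suffices to show that every subsequence has a further subsequence along which the \(L_2\)-distance tends to zero almost surely.

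First I would set up this almost-sure framework. By the strong law, \(\mu_m\Rightarrow\mu\) and \(\nu_n\Rightarrow\nu\) almost surely, and \(M_p(\mu_m)\to M_p(\mu)\), \(M_p(\nu_n)\to M_p(\nu)\) almost surely. Convolution gives \(\mu_m^\sigma\Rightarrow\mu^\sigma\) and \(\nu_n^\sigma\Rightarrow\nu^\sigma\). All costs are finite because the \(p\)-moments are finite.

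Next I would check the hypotheses of Lemma~\ref{lem:source-potential-stability} for \(h(z)=|z|^p\) with \(p>1\). This \(h\) is differentiable, strictly convex and superlinear. The cone condition holds because \(h\) is radial and increasing, so on a cone pointing toward the origin the maximum is at the vertex. By Proposition~\ref{prop:unique-potentials}, \(\mu^\sigma\) and \(\nu^\sigma\) have strictly positive densities, so their support is \(\R^d\) with empty boundary. The lemma then gives constants \(a_{m,n}\) such that \(\widetilde\phi_{m,n}-a_{m,n}\to\phi_\sigma\) locally uniformly on \(\R^d\), where uniqueness (Proposition~\ref{prop:unique-potentials}) identifies the limit regardless of which optimal pair is chosen. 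For the second coordinate, \(c_p\)-conjugacy together with the symmetric version of the lemma gives \(\widetilde\psi_{m,n}+a'_{m,n}\to\psi_\sigma\) locally uniformly for some constants \(a'_{m,n}\). I then need \(a'_{m,n}-a_{m,n}\to0\). This follows from conjugacy, because \(\psi(y)=\inf_x\{|x-y|^p-\phi(x)\}\) and, on compacts, the infimum is attained on a bounded set thanks to the uniform growth bounds. So the opposite-shift structure is preserved in the limit.

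Third comes domination. Since the empirical \(p\)-moments are eventually bounded, Lemma~\ref{lem:growth} gives \(|\widetilde\phi_{m,n}(x)|\le L(1+|x|^p)\) with \(L\) not depending on \(m,n\). This bound would also bound \(a_{m,n}\) after evaluating at a fixed point. The convolutions are
\[
(\widetilde\phi_{m,n}-a_{m,n})*\gamma_\sigma(x)=\int(\widetilde\phi_{m,n}(x+z)-a_{m,n})\,\gamma_\sigma(dz).
\]
By dominated convergence with the majorant \(C(1+|x|^p+|z|^p)\) under \(\gamma_\sigma\), these converge pointwise to \(h_\sigma(x)\), and they are dominated by \(C(1+|x|^p)\in L_2(\mu)\) because \(q_\mu>2p\). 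A second application of dominated convergence gives \(L_2(\mu)\)-convergence, and in particular \(\mu\bigl((\widetilde\phi_{m,n}-a_{m,n})*\gamma_\sigma\bigr)\to\mu h_\sigma=0\). Therefore the shift normalizing \(\mu\widehat h_{m,n}=0\) differs from \(a_{m,n}\) by \(o(1)\), and \(\widehat h_{m,n}\to h_\sigma\) in \(L_2(\mu)\). The same argument on the \(\nu\) side, using \(q_\nu>2p\), gives \(\widehat k_{m,n}\to k_\sigma\) in \(L_2(\nu)\).

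The main obstacle is the first step: applying Lemma~\ref{lem:source-potential-stability} to random measures along subsequences, and matching the two additive constants so that the normalized pair converges jointly rather than coordinatewise with unrelated shifts. I would handle it through conjugacy as sketched above, using the uniform growth bounds to localize the infimum. Measurability issues are absorbed by reading every statement in outer probability.
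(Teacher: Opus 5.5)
Your proof is correct, but it matches the additive constants of the two coordinates differently from the paper. The paper applies Lemma~\ref{lem:source-potential-stability} in both directions, which produces separate constants, and then shows via the envelopes that the mismatch \(d_{m,n}\) is bounded. It then passes to the limit in the dual identity \(\int\phi^0_{m,n}\,d\mu_m^\sigma+\int\psi^0_{m,n}\,d\nu_n^\sigma=W_p(\mu_m^\sigma,\nu_n^\sigma)^p\). That limit needs truncation, uniform integrability along the smoothed empirical laws (from a.s.\ \(W_{2p}\)-convergence) and convergence of the optimal values, and it forces \(d_{m,n}\to0\).

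You use \(c_p\)-conjugacy together with a uniform localization of the infimum. The localization holds, but it must come from the growth bound on the \emph{other} potential. A bound \(|\phi|\le L(1+|x|^p)\) with \(L\ge1\) does not make \(x\mapsto|x-y|^p-\phi(x)\) coercive, so the naive reading of ``thanks to the uniform growth bounds'' fails. The correct argument: let \(x^*\) be an \(\varepsilon\)-minimizer for \(y\) with \(s=|x^*-y|>1\), and put \(y'=y+(x^*-y)/s\). Then \(\phi(x^*)\le(s-1)^p-\psi(y')\) by conjugacy, so
\[
p(s-1)^{p-1}\le s^p-(s-1)^p\le \psi(y)-\psi(y')+\varepsilon .
\]
For \(y\) in a compact set, the right side is bounded uniformly in \(m,n\) by the growth bound on \(\widetilde\psi_{m,n}\); differences of \(\psi\) are shift invariant, and the same bound holds for the population pair. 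Since \(p>1\), \(s\) is uniformly bounded.

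With this localization, \(\widetilde\psi_{m,n}+a_{m,n}=(\widetilde\phi_{m,n}-a_{m,n})^{c_p}\to\phi_\sigma^{c_p}=\psi_\sigma\) locally uniformly, so the symmetric application of the stability lemma is actually superfluous. Your route is pointwise and more elementary. It needs only weak convergence and bounded empirical \(p\)-moments, not convergence of the transport costs or uniform integrability along the empirical laws, but it uses the specific geometry of the power cost with \(p>1\). The paper's value-matching argument is more structural and would carry over to any cost for which stability holds in both coordinates. Your last step (dominated convergence directly on the convolutions, then absorbing the \(o(1)\) normalization shift) is equivalent to the paper's \(L_2(\mu^\sigma)\)-convergence followed by Jensen.
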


\begin{proof}
The argument is pathwise on an event of probability one; if the selected empirical pair is not measurable, the same conclusion is read in outer probability.
The two-index notation is justified by applying the sequential stability statement along arbitrary deterministic sequences \(m_\ell,n_\ell\to\infty\); the resulting conclusion is independent of the chosen sequence.
Since \(q_\mu,q_\nu>d+2p\), both \(\mu\) and \(\nu\) have finite \(2p\)-moments.  The empirical measures converge almost surely in \(W_{2p}\), and coupling each observation with the same Gaussian noise yields
\[
W_{2p}(\mu_m^\sigma,\mu^\sigma)\le W_{2p}(\mu_m,\mu)\to0,\qquad
W_{2p}(\nu_n^\sigma,\nu^\sigma)\le W_{2p}(\nu_n,\nu)\to0
\]
almost surely.

The assumptions of Lemma~\ref{lem:source-potential-stability} are satisfied for the first coordinate.  The displayed \(W_{2p}\)-convergence implies weak convergence of \(\mu_m^\sigma\) to \(\mu^\sigma\) and of \(\nu_n^\sigma\) to \(\nu^\sigma\), and all relevant \(c_p\)-costs are finite since the laws have finite \(p\)-moments.  The limiting source law \(\mu^\sigma\) has a strictly positive \(C^\infty\) density, support \(\R^d\), connected interior, and empty boundary.  In addition, \(h(z)=|z|^p\), \(p>1\), is differentiable and strictly convex, satisfies \(h(z)/|z|\to\infty\), and fulfills the cone condition in Lemma~\ref{lem:source-potential-stability}: for fixed height and aperture, a cone with vertex \(z\) and axis directed toward the origin lies inside \(\{|w|\le |z|\}\) when \(|z|\) is sufficiently large, so the radial function \(|\cdot|^p\) attains its maximum at the vertex.  Applied to \((\mu_m^\sigma,\nu_n^\sigma)\to(\mu^\sigma,\nu^\sigma)\), that lemma furnishes constants \(a_{m,n}\) such that, upon setting
\[
\phi_{m,n}^0=\widetilde\phi_{m,n}-a_{m,n},
\qquad
\psi_{m,n}^0=\widetilde\psi_{m,n}+a_{m,n},
\]
the first potentials \(\phi_{m,n}^0\) converge locally uniformly to a population potential \(\phi_\sigma^0\).  Let \(\psi_\sigma^0\) be its \(c_p\)-conjugate population partner.  The constants in the two coordinates are opposite since the empirical pair is kept \(c_p\)-conjugate throughout.

Along the almost sure event above, the empirical \(p\)-moments of \(\mu_m^\sigma\) and \(\nu_n^\sigma\) are eventually bounded.  Lemma~\ref{lem:growth} gives, before the subtraction of \(a_{m,n}\), a polynomial envelope with an eventually bounded constant.  Since \(\phi_{m,n}^0=\widetilde\phi_{m,n}-a_{m,n}\) converges locally uniformly, evaluation at a fixed point \(x_0\in\R^d\) gives
\[
|a_{m,n}|\le |\widetilde\phi_{m,n}(x_0)|+|\phi_{m,n}^0(x_0)|,
\]
and the two terms on the right are eventually bounded.  Hence, for some finite random constant \(C\),
\[
|\phi_{m,n}^0(x)|+|\psi_{m,n}^0(y)|+|\phi_\sigma^0(x)|+|\psi_\sigma^0(y)|
\le C(1+|x|^p+|y|^p)
\]
eventually.

To identify the second coordinate in the same conjugate normalization, apply Lemma~\ref{lem:source-potential-stability} with the two marginals interchanged.  The assumptions are again satisfied, since \(\nu^\sigma\) also has a smooth density with full support.  The lemma furnishes constants \(c_{m,n}\) such that \(\psi_{m,n}^0-c_{m,n}\) converges locally uniformly to a population second potential.  By Proposition~\ref{prop:unique-potentials}, this limit differs from the \(c_p\)-conjugate partner \(\psi_\sigma^0\) of \(\phi_\sigma^0\) by a constant.  Absorbing that constant into \(c_{m,n}\), we may write the constants as \(d_{m,n}\) and assume
\[
\psi_{m,n}^0-d_{m,n}\to\psi_\sigma^0
\]
locally uniformly.  The envelope just displayed and evaluation at a single fixed point imply that \(d_{m,n}\) is bounded.  Also,
\[
\int\phi_{m,n}^0\,d\mu_m^\sigma
+\int\psi_{m,n}^0\,d\nu_n^\sigma
=W_p(\mu_m^\sigma,\nu_n^\sigma)^p .
\]
The convergence of the corresponding integrals is obtained by truncating to large balls.  Fix \(R<\infty\).  On \(B_R\), local uniform convergence of the potentials and weak convergence of the smoothed empirical laws imply convergence of the truncated integrals.  On \(B_R^c\), the common envelope \(C(1+|x|^p+|y|^p)\) is uniformly integrable along the smoothed empirical laws because they converge in \(W_{2p}\).  Letting \(R\to\infty\), the two dual integrals converge after replacing \(\psi_{m,n}^0\) by \(\psi_{m,n}^0-d_{m,n}\).  With this replacement, the empirical dual value is \(W_p(\mu_m^\sigma,\nu_n^\sigma)^p-d_{m,n}\).  Since \(W_p(\mu_m^\sigma,\nu_n^\sigma)^p\to W_p(\mu^\sigma,\nu^\sigma)^p\), comparison with the limiting dual value forces \(d_{m,n}\to0\).  Hence \(\psi_{m,n}^0\to\psi_\sigma^0\) locally uniformly.

The \(L_2\) convergence follows by applying the same truncation to squared differences.  On each ball the squared differences converge uniformly to zero; outside the ball they are dominated by the uniformly integrable envelope \(C(1+|x|^{2p}+|y|^{2p})\).  The uniform integrability follows from the \(W_{2p}\)-convergence of the smoothed empirical laws, which gives uniformly integrable \(2p\)-moment tails.  Hence
\[
\|\phi_{m,n}^0-\phi_\sigma^0\|_{L_2(\mu^\sigma)}
+\|\psi_{m,n}^0-\psi_\sigma^0\|_{L_2(\nu^\sigma)}
\to0
\]
since \(\mu^\sigma\) and \(\nu^\sigma\) have finite \(2p\)-moments.

We now pass from this stability normalization to the normalization used in the empirical process.
Let \(h_{m,n}^0=\phi_{m,n}^0*\gamma_\sigma\) and \(h_\sigma^0=\phi_\sigma^0*\gamma_\sigma\).  Set
\[
b_{m,n}=\mu h_{m,n}^0=\int\phi_{m,n}^0\,d\mu^\sigma,
\qquad
b=\mu h_\sigma^0=\int\phi_\sigma^0\,d\mu^\sigma .
\]
The \(L_2(\mu^\sigma)\)-convergence just proved yields \(b_{m,n}\to b\).  The normalized pair in the statement is
\[
\widehat\phi_{m,n}=\phi_{m,n}^0-b_{m,n},
\qquad
\widehat\psi_{m,n}=\psi_{m,n}^0+b_{m,n},
\]
while the corresponding population representative is
\[
\phi_\sigma=\phi_\sigma^0-b,
\qquad
\psi_\sigma=\psi_\sigma^0+b .
\]
Hence
\[
\|\widehat\phi_{m,n}-\phi_\sigma\|_{L_2(\mu^\sigma)}
+\|\widehat\psi_{m,n}-\psi_\sigma\|_{L_2(\nu^\sigma)}
\to0 .
\]
Jensen's inequality transfers this to the convolved functions:
\[
\|\widehat h_{m,n}-h_\sigma\|_{L_2(\mu)}^2
\le \|\widehat\phi_{m,n}-\phi_\sigma\|_{L_2(\mu^\sigma)}^2,
\qquad
\|\widehat k_{m,n}-k_\sigma\|_{L_2(\nu)}^2
\le \|\widehat\psi_{m,n}-\psi_\sigma\|_{L_2(\nu^\sigma)}^2 .
\]
This establishes almost sure convergence along the empirical sequence, hence convergence in probability, with the interpretation in outer probability if the selected pair is nonmeasurable.
\end{proof}

The normalized population pair \((h_\sigma,k_\sigma)\) maximizes \(\mathbb M\) over the smoothed feasible dual class.  For empirical measures, let \((\widehat h_{m,n},\widehat k_{m,n})\) be any normalized optimal pair as in Lemma~\ref{lem:l2-potential-stability}.  Probability statements involving this selected pair are read in outer probability unless a measurable selection is fixed; the final expansion concerns the measurable cost statistic itself.

\subsection{Expansion and inference}
\begin{proposition}\label{prop:finite-moment-expansion}
Let \(p>1\), \(\sigma>0\), and \(\mu\ne\nu\).  Suppose \(M_{q_\mu}(\mu)+M_{q_\nu}(\nu)<\infty\) for some \(q_\mu,q_\nu>d+2p\), and let \(m/(m+n)\to\lambda\in(0,1)\).  Then, with \(N_{m,n}=mn/(m+n)\),
\[
T_p^{(\sigma)}(\mu_m,\nu_n)-T_p^{(\sigma)}(\mu,\nu)
=(\mu_m-\mu)\bar h_\sigma+(\nu_n-\nu)\bar k_\sigma
+o_{\PP}(N_{m,n}^{-1/2}).
\]
\end{proposition}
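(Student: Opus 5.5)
The plan is a sandwich argument built on the two one-sided inequalities from the proof of Lemma~\ref{lem:twosample-dual}. The population pair \((h_\sigma,k_\sigma)\) is used as a lower bound, the empirical pair \((\widehat h_{m,n},\widehat k_{m,n})\) as an upper bound, and Lemma~\ref{lem:localized-equicont} shows that the two bounds differ by \(o_{\PP}(N_{m,n}^{-1/2})\).

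\textbf{Lower bound.} Feasibility of \((\phi_\sigma,\psi_\sigma)\) for \((\mu_m^\sigma,\nu_n^\sigma)\), together with the convolution identity \(\int\phi\,d\mu_m^\sigma=\mu_m(\phi*\gamma_\sigma)\), gives
\[
T_p^{(\sigma)}(\mu_m,\nu_n)-T_p^{(\sigma)}(\mu,\nu)\ge(\mathbb M_{m,n}-\mathbb M)(h_\sigma,k_\sigma).
\]
Because \(\mu\) and \(\mu_m\) are probability measures, the right-hand side equals \((\mu_m-\mu)\bar h_\sigma+(\nu_n-\nu)\bar k_\sigma\); the additive constants cancel.

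\textbf{Upper bound.} Feasibility of the empirical optimal pair for \((\mu^\sigma,\nu^\sigma)\) gives
\[
T_p^{(\sigma)}(\mu_m,\nu_n)-T_p^{(\sigma)}(\mu,\nu)\le(\mathbb M_{m,n}-\mathbb M)(\widehat h_{m,n},\widehat k_{m,n}).
\]
The normalization \(\mu\widehat h_{m,n}=0\) does not change this increment. It follows that
\[
0\le (\mathbb M_{m,n}-\mathbb M)(\widehat h_{m,n},\widehat k_{m,n})-\bigl[T_p^{(\sigma)}(\mu_m,\nu_n)-T_p^{(\sigma)}(\mu,\nu)\bigr],
\]
and that the remainder in the expansion is bounded in absolute value by
\[
(\mathbb M_{m,n}-\mathbb M)(\widehat h_{m,n},\widehat k_{m,n})-(\mathbb M_{m,n}-\mathbb M)(h_\sigma,k_\sigma).
\]
It therefore suffices to show that this difference is \(o_{\PP}(N_{m,n}^{-1/2})\).

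\textbf{Equicontinuity.} I will apply Lemma~\ref{lem:localized-equicont} with \((H_{m,n},K_{m,n})=(\widehat h_{m,n},\widehat k_{m,n})\) and \((h_0,k_0)=(h_\sigma,k_\sigma)\). This requires two inputs.
\begin{enumerate}[label=(\roman*)]
\item \emph{Convergence.} We need \(d_2\)-convergence to zero in probability. This is exactly Lemma~\ref{lem:l2-potential-stability}, which applies since \(p>1\) and \(q_\mu,q_\nu>d+2p\).
\item \emph{Localization.} We need membership in \(\mathcal A_L\) with high probability. As in the proof of Theorem~\ref{thm:main-prob}, on the event \(E_M=\{M_p(\mu_m)+M_p(\nu_n)\le M\}\) the smoothed empirical moments satisfy \(M_p(\mu_m^\sigma)+M_p(\nu_n^\sigma)\le C_{p,\sigma}(1+M)\). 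Lemma~\ref{lem:growth} then gives \(c_p\)-conjugate representatives with envelope constant \(L_M\). The event \(E_M\) has probability at least \(1-\eta\) for large \(M\).
\end{enumerate}

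\textbf{Main obstacle.} The delicate point is the normalizing shift \(\mu\widehat h_{m,n}=0\) required in the definition of \(\mathcal A_L\). The shifted potentials must still satisfy a polynomial envelope with constant bounded on \(E_M\), so the shift \(b\) must be bounded there. Since \(|\widetilde\phi*\gamma_\sigma|\le C L_M(1+|x|^p)\) by Lemma~\ref{lem:gaussian-smooth}, we get \(|b|=|\mu(\widetilde\phi*\gamma_\sigma)|\le CL_M M_p(\mu)\). Enlarging \(L\) to absorb this constant places the normalized empirical pair in \(\mathcal A_L\). The population pair lies in \(\mathcal A_L\) deterministically by the same reasoning.

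\textbf{Conclusion.} The Donsker property underlying Lemma~\ref{lem:localized-equicont} is supplied by Lemma~\ref{lem:localized-donsker} under \(q>d+2p\). That lemma then yields
\[
N_{m,n}^{1/2}\bigl[(\mathbb M_{m,n}-\mathbb M)(\widehat h_{m,n},\widehat k_{m,n})-(\mathbb M_{m,n}-\mathbb M)(h_\sigma,k_\sigma)\bigr]=o_{\PP}(1).
\]
Combining this with the sandwich gives the expansion. Possible nonmeasurability of the selected empirical pair is handled by reading the statements in outer probability; the sandwich controls the measurable statistic. Finally, \(\mu\ne\nu\) is not needed for the expansion itself; it only ensures a nondegenerate CLT later.
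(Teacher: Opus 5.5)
Your proposal is correct and follows essentially the same route as the paper: you localize on bounded empirical $p$-moments (including the bound on the normalizing shift), invoke Lemma~\ref{lem:l2-potential-stability} for $d_2$-convergence and Lemma~\ref{lem:localized-equicont} for the equicontinuity increment, and then sandwich the remainder between $0$ and that increment. Your two feasibility inequalities are exactly the paper's maximizer inequalities $\mathbb M_{m,n}(\widehat h_{m,n},\widehat k_{m,n})\ge\mathbb M_{m,n}(h_\sigma,k_\sigma)$ and $\mathbb M(\widehat h_{m,n},\widehat k_{m,n})\le\mathbb M(h_\sigma,k_\sigma)$, arranged so that they bound the remainder directly.
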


\begin{proof}
The empirical optimal potentials admit polynomial envelope constants controlled by \(M_p(\mu_m^\sigma)+M_p(\nu_n^\sigma)\).  The additional shift imposing \(\mu\widehat h_{m,n}=0\) is bounded by integrating this envelope against the fixed law \(\mu^\sigma\), so it only changes the envelope constant by a deterministic factor on events localized by moments.  Since
\[
M_p(\mu_m^\sigma)\le C_{p,\sigma}\{1+M_p(\mu_m)\},
\qquad
M_p(\nu_n^\sigma)\le C_{p,\sigma}\{1+M_p(\nu_n)\},
\]
and the empirical \(p\)-moments are tight, for every \(\eta>0\) there exists \(L < \infty\) such that, with probability exceeding \(1-\eta\), the population maximizer and the empirical maximizer both lie in \(\mathcal A_L\). By Lemma~\ref{lem:l2-potential-stability},  
\[
\|(\widehat h_{m,n},\widehat k_{m,n})-(h_\sigma,k_\sigma)\|_{L_2(\mu)\oplus L_2(\nu)}\to_{\PP}0,
\]
and the preceding localization satisfies precisely the hypothesis of Lemma~\ref{lem:localized-equicont}.  We obtain
\[
N_{m,n}^{1/2}\Bigl[(\mathbb M_{m,n}-\mathbb M)(\widehat h_{m,n},\widehat k_{m,n})
-(\mathbb M_{m,n}-\mathbb M)(h_\sigma,k_\sigma)\Bigr]
=o_{\PP}(1).
\]
This assertion is interpreted in outer probability if a measurable optimizer has not been fixed; the inequalities below hold for any selected optimal representative, and the final left-hand side is the measurable scalar cost statistic.
Since \((\widehat h_{m,n},\widehat k_{m,n})\) is an empirical maximizer and \((h_\sigma,k_\sigma)\) is the population maximizer,
\[
0\le
\mathbb M_{m,n}(\widehat h_{m,n},\widehat k_{m,n})-
\mathbb M_{m,n}(h_\sigma,k_\sigma)
\]
and
\[
\mathbb M(\widehat h_{m,n},\widehat k_{m,n})-
\mathbb M(h_\sigma,k_\sigma)\le0.
\]
Together with the preceding equicontinuity bound, these inequalities yield
\[
\mathbb M(h_\sigma,k_\sigma)-
\mathbb M(\widehat h_{m,n},\widehat k_{m,n})=o_{\PP}(N_{m,n}^{-1/2}).
\]
The left-hand side is nonnegative and is bounded above by
\[
(\mathbb M_{m,n}-\mathbb M)(\widehat h_{m,n},\widehat k_{m,n})
-(\mathbb M_{m,n}-\mathbb M)(h_\sigma,k_\sigma),
\]
whose size is controlled by the localized equicontinuity just proved.
It remains to write the cost difference in terms of the population potential:
\[
\begin{aligned}
T_p^{(\sigma)}(\mu_m,\nu_n)-T_p^{(\sigma)}(\mu,\nu)
&=
\mathbb M_{m,n}(\widehat h_{m,n},\widehat k_{m,n})-
\mathbb M(h_\sigma,k_\sigma)\\
&=(\mathbb M_{m,n}-\mathbb M)(h_\sigma,k_\sigma)+o_{\PP}(N_{m,n}^{-1/2})\\
&=(\mu_m-\mu)\bar h_\sigma+(\nu_n-\nu)\bar k_\sigma+o_{\PP}(N_{m,n}^{-1/2}),
\end{aligned}
\]
which is the stated expansion.
\end{proof}

The preceding expansion gives the fixed-alternative limit distribution.

\begin{theorem}[Separated CLT]\label{thm:cost-clt}
Let \(p>1\), \(\sigma>0\), and suppose \(M_{q_\mu}(\mu)+M_{q_\nu}(\nu)<\infty\) for some \(q_\mu,q_\nu>d+2p\).  Assume \(\mu\ne\nu\) and \(m/(m+n)\to\lambda\in(0,1)\).  Then
\[
\sqrt{\frac{mn}{m+n}}\left\{T_p^{(\sigma)}(\mu_m,\nu_n)-T_p^{(\sigma)}(\mu,\nu)\right\}
\Rightarrow
\normaldist(0,\tau_{\sigma,p}^2),
\]
where
\[
\tau_{\sigma,p}^2=(1-\lambda)\operatorname{Var}_{\mu}\{\bar h_\sigma(X)\}
+\lambda\operatorname{Var}_{\nu}\{\bar k_\sigma(Y)\}.
\]
The limit is allowed to be degenerate only if the displayed variance is zero.
\end{theorem}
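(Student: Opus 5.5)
The plan is to start from the first-order expansion of Proposition~\ref{prop:finite-moment-expansion}, which holds under exactly the present hypotheses. Multiplying it by \(N_{m,n}^{1/2}\) gives
\[
N_{m,n}^{1/2}\{T_p^{(\sigma)}(\mu_m,\nu_n)-T_p^{(\sigma)}(\mu,\nu)\}
=\Bigl(\frac{N_{m,n}}{m}\Bigr)^{1/2}\mathbb G_m^\mu\bar h_\sigma
+\Bigl(\frac{N_{m,n}}{n}\Bigr)^{1/2}\mathbb G_n^\nu\bar k_\sigma+o_{\PP}(1).
\]
Here \(\mathbb G_m^\mu=m^{1/2}(\mu_m-\mu)\) and \(\mathbb G_n^\nu=n^{1/2}(\nu_n-\nu)\), as in the proof of Lemma~\ref{lem:localized-equicont}. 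By Slutsky's lemma it then suffices to prove a CLT for the leading linear term.

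First, I check that \(\bar h_\sigma\in L_2(\mu)\) and \(\bar k_\sigma\in L_2(\nu)\). Lemmas~\ref{lem:growth} and~\ref{lem:gaussian-smooth} give envelopes \(C(1+|x|^p)\) for \(h_\sigma\) and \(C(1+|y|^p)\) for \(k_\sigma\). The hypothesis \(q_\mu,q_\nu>d+2p\ge2p\) makes these envelopes square integrable. Since the normalization gives \(\mu\bar h_\sigma=0\) and \(\nu\bar k_\sigma=0\), each of \(\mathbb G_m^\mu\bar h_\sigma\) and \(\mathbb G_n^\nu\bar k_\sigma\) is a normalized sum of i.i.d. centered variables with finite variance. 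The classical Lindeberg--L\'evy CLT then gives
\[
\mathbb G_m^\mu\bar h_\sigma\Rightarrow\normaldist(0,\operatorname{Var}_\mu\bar h_\sigma),
\qquad
\mathbb G_n^\nu\bar k_\sigma\Rightarrow\normaldist(0,\operatorname{Var}_\nu\bar k_\sigma).
\]
When a variance vanishes, the corresponding term is identically zero almost surely and the limit is the point mass at zero.

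Second, I combine the two sides. The samples are independent, so the two terms are independent and the joint characteristic function factorizes; this gives joint convergence to a product of independent normals. The deterministic weights satisfy \(N_{m,n}/m\to1-\lambda\) and \(N_{m,n}/n\to\lambda\) along any sequence with \(m/(m+n)\to\lambda\). By the continuous mapping theorem and Slutsky's lemma, the weighted sum converges to \(\normaldist(0,(1-\lambda)\operatorname{Var}_\mu\bar h_\sigma+\lambda\operatorname{Var}_\nu\bar k_\sigma)=\normaldist(0,\tau_{\sigma,p}^2)\). The degenerate case \(\tau_{\sigma,p}^2=0\) is covered, since the limit is then the point mass at zero, which is consistent with the statement.

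The substantive difficulty has already been absorbed into Proposition~\ref{prop:finite-moment-expansion}, namely the uniqueness, \(L_2\)-stability and localized equicontinuity results. The only remaining delicate point is measurability. Because the left-hand side is the measurable cost statistic and the remainder is \(o_{\PP}(1)\) in outer probability, I would apply Slutsky's lemma in its outer-probability form (as in van der Vaart--Wellner). This applies because the leading term is a measurable sum of i.i.d. terms.
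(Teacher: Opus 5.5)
Your proposal is correct and follows essentially the same route as the paper's proof. Both arguments invoke Proposition~\ref{prop:finite-moment-expansion}, use the order-\(p\) envelopes with \(q_\mu,q_\nu>2p\) to get square integrability of \(\bar h_\sigma,\bar k_\sigma\), apply the classical CLT separately to the two independent samples with weights \(N_{m,n}/m\to1-\lambda\) and \(N_{m,n}/n\to\lambda\), and absorb the \(o_{\PP}(1)\) remainder by Slutsky.
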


\begin{proof}
The first-order expansion is furnished by Proposition~\ref{prop:finite-moment-expansion}.  The functions \(\bar h_\sigma\) and \(\bar k_\sigma\) have order-\(p\) envelopes, and \(q_\mu,q_\nu>d+2p\) entails in particular \(q_\mu,q_\nu>2p\).  Hence both functions are square integrable.  Independence of the two samples yields
\[
\sqrt{\frac{mn}{m+n}}\,(\mu_m-\mu)\bar h_\sigma
\Rightarrow \normaldist\bigl(0,(1-\lambda)\operatorname{Var}_{\mu}\{\bar h_\sigma(X)\}\bigr)
\]
and
\[
\sqrt{\frac{mn}{m+n}}\,(\nu_n-\nu)\bar k_\sigma
\Rightarrow \normaldist\bigl(0,\lambda\operatorname{Var}_{\nu}\{\bar k_\sigma(Y)\}\bigr).
\]
Summing the independent limits and using the remainder in Proposition~\ref{prop:finite-moment-expansion} proves the result.
\end{proof}

\begin{corollary}\label{cor:distance-clt}
Under the assumptions of Theorem~\ref{thm:cost-clt}, let
\[
\Delta_\sigma=W_p(\mu^\sigma,\nu^\sigma)>0,
\qquad
\widehat D_{m,n}=W_p(\mu_m^\sigma,\nu_n^\sigma).
\]
Then
\[
\sqrt{\frac{mn}{m+n}}\left(\widehat D_{m,n}-\Delta_\sigma\right)
\Rightarrow
\normaldist\left(0,\frac{\tau_{\sigma,p}^2}{p^2\Delta_\sigma^{2p-2}}\right).
\]
\end{corollary}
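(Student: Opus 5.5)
This follows from Theorem~\ref{thm:cost-clt} by the delta method applied to the map \(g(t)=t^{1/p}\) at the point \(A=\Delta_\sigma^p\).

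First, \(\Delta_\sigma>0\). Since \(\mu\ne\nu\) and the Gaussian characteristic function does not vanish, convolution by \(\gamma_\sigma\) is injective, so \(\mu^\sigma\ne\nu^\sigma\) (as in Corollary~\ref{cor:mean-separated-distance}). Therefore \(A=T_p^{(\sigma)}(\mu,\nu)>0\). Write \(A_{m,n}=T_p^{(\sigma)}(\mu_m,\nu_n)=\widehat D_{m,n}^p\). By Theorem~\ref{thm:cost-clt},
\[
\sqrt{N_{m,n}}\,(A_{m,n}-A)\Rightarrow \normaldist(0,\tau_{\sigma,p}^2).
\]
In particular \(A_{m,n}-A=O_{\PP}(N_{m,n}^{-1/2})\), and hence \(A_{m,n}\to_{\PP}A\).

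Next, \(g\) is continuously differentiable on \((0,\infty)\), with \(g'(A)=p^{-1}A^{1/p-1}=p^{-1}\Delta_\sigma^{1-p}\). On the event \(\{|A_{m,n}-A|\le A/2\}\), whose probability tends to one, the mean value theorem gives
\[
\widehat D_{m,n}-\Delta_\sigma=g'(\xi_{m,n})(A_{m,n}-A)
\]
for some \(\xi_{m,n}\) between \(A_{m,n}\) and \(A\). Since \(\xi_{m,n}\to_{\PP}A\) and \(g'\) is continuous at \(A\), we have \(g'(\xi_{m,n})\to_{\PP}g'(A)\). Slutsky's lemma then yields
\[
\sqrt{N_{m,n}}(\widehat D_{m,n}-\Delta_\sigma)=g'(A)\sqrt{N_{m,n}}(A_{m,n}-A)+o_{\PP}(1)\Rightarrow \normaldist\bigl(0,g'(A)^2\tau_{\sigma,p}^2\bigr).
\]
The asymptotic variance is \(g'(A)^2\tau_{\sigma,p}^2=\tau_{\sigma,p}^2/(p^2\Delta_\sigma^{2p-2})\), as claimed.

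The only point needing care is keeping \(A_{m,n}\) away from zero, where \(g\) is not differentiable. Separation handles this: it restricts the argument to the high-probability event above, so no uniform Lipschitz bound near the origin is needed. The degenerate case \(\tau_{\sigma,p}^2=0\) is covered by the same argument, with a point-mass limit.
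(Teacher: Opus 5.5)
Your proposal is correct and takes the same approach as the paper: the delta method for $t\mapsto t^{1/p}$ at $\Delta_\sigma^p>0$, applied to Theorem~\ref{thm:cost-clt}, with derivative $(p\Delta_\sigma^{p-1})^{-1}$. Your mean-value and Slutsky argument on the event $\{|A_{m,n}-A|\le A/2\}$ simply writes out the standard delta-method step that the paper cites without detail.
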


\begin{proof}
The map \(t\mapsto t^{1/p}\) is differentiable at \(T_p^{(\sigma)}(\mu,\nu)=\Delta_\sigma^p>0\), with derivative \((p\Delta_\sigma^{p-1})^{-1}\).  Applying the delta method to Theorem~\ref{thm:cost-clt} gives the stated variance.
\end{proof}

\begin{lemma}\label{lem:holdout-lln}
Let \(\mu\) be a probability measure, and let \(Z_1,\ldots,Z_N\) be independent observations from \(\mu\), independent of a random function \(f_N\).  Suppose that \(N\to\infty\), that \(f_N\to f\) in \(L_2(\mu)\) in probability for some \(f\in L_2(\mu)\), and that for every \(\eta>0\) there exists \(L<\infty\) and a deterministic envelope \(G\) with \(G^2\in L_1(\mu)\) such that
\[
\PP\{|f_N|\le L G\}\ge 1-\eta
\]
for all sufficiently large \(N\).  Then
\[
N^{-1}\sum_{i=1}^N f_N(Z_i)-\mu f_N\to_{\PP}0,
\qquad
N^{-1}\sum_{i=1}^N f_N(Z_i)^2-\mu f_N^2\to_{\PP}0 .
\]
\end{lemma}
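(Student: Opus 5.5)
We will prove the statement conditionally on \(f_N\), using independence of the sample \(Z_1,\ldots,Z_N\) from \(f_N\), and then truncate at a fixed level to handle the squared term, for which only an \(L_1\) envelope is available. Write \(\mu_N\) for the empirical measure of \(Z_1,\ldots,Z_N\).

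\emph{First term.} Fix \(\eta>0\) and take \(L,G\) as in the hypothesis. Let \(E_N=\{|f_N|\le LG\}\); this event depends only on \(f_N\). Conditionally on \(f_N\), the variables \(f_N(Z_i)\) are i.i.d.\ with mean \(\mu f_N\). On \(E_N\) their conditional variance is at most \(\mu f_N^2\le L^2\mu G^2\). Conditional Chebyshev therefore gives
\[
\PP\{|(\mu_N-\mu)f_N|>\eps,\ E_N\}\le L^2\mu G^2/(N\eps^2)\to0.
\]
Since \(\PP(E_N^c)\le\eta\) for large \(N\), the first claim follows. The \(L_2\)-convergence hypothesis is not even needed here.

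\emph{Second term.} The functions \(f_N^2\) are dominated by \(L^2G^2\), which is only integrable, so Chebyshev does not apply directly. We split each function at a fixed level \(K\). Pick \(K\) so large that \(L^2\mu(G^2\ind\{G>K\})\le\eps\). Write
\[
f_N^2=f_N^2\ind\{G\le K\}+f_N^2\ind\{G>K\}.
\]

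On \(E_N\), the first piece is bounded by \(L^2K^2\). Conditional Chebyshev then makes its centered empirical average \(O_{\PP}(N^{-1/2})\).

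For the second piece, on \(E_N\) we have
\[
|(\mu_N-\mu)(f_N^2\ind\{G>K\})|\le L^2\mu_N(G^2\ind\{G>K\})+L^2\mu(G^2\ind\{G>K\}).
\]
The right-hand side involves only the deterministic function \(G^2\ind\{G>K\}\). Its expectation is at most \(2L^2\mu(G^2\ind\{G>K\})\le2\eps\), so by Markov it exceeds \(\sqrt\eps\) with probability at most \(2\sqrt\eps\).

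Combining the pieces yields
\[
\limsup_N\PP\{|(\mu_N-\mu)f_N^2|>2\sqrt\eps\}\le\eta+2\sqrt\eps.
\]
Letting \(\eps,\eta\downarrow0\) proves the second claim. Alternatively, one could use \(f_N\to f\) in \(L_2(\mu)\) and compare with \(f^2\), but the truncation argument is cleaner.

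\emph{Main obstacle.} The delicate step is the squared term, because the envelope \(G^2\) is only in \(L_1(\mu)\). This is handled by the uniform-integrability truncation above, which works because \(G\) and \(L\) are fixed, not depending on \(N\), once \(\eta\) is fixed.

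A measurability caveat remains: if \(f_N\) is nonmeasurable, the conditional arguments are read in outer probability. This matches the convention used in Lemma~\ref{lem:l2-potential-stability}.
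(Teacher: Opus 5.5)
Your proof is correct and follows essentially the same route as the paper. The first average is handled by conditional Chebyshev on the envelope event. The squared average uses a fixed-level truncation of the envelope, with conditional Chebyshev on the bounded part and a first-moment bound on the tail, and then lets \(N\to\infty\), the truncation level \(\to\infty\), and \(\eta\downarrow0\); like the paper, you never need the \(L_2\)-convergence hypothesis.
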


\begin{proof}
It suffices to work on an event of probability at least \(1-\eta\) on which \(|f_N|\le LG\). Conditional on \(f_N\), the first centered average has variance at most \(N^{-1}\mu f_N^2\le N^{-1}L^2\mu G^2\), hence it converges to zero in conditional probability.

For the second centered average, set \(H_N=f_N^2\). On the same event, \(0\le H_N\le L^2G^2\), with \(L^2G^2\in L_1(\mu)\).  For \(B>0\), decompose  \(H_N=H_N^{(B)}+R_N^{(B)}\), where \(H_N^{(B)}=H_N\ind\{L^2G^2\le B\}\). Conditional on \(f_N\), the bounded part has variance at most \(B^2/N\). The remainder satisfies
\[
\mu R_N^{(B)}\le \mu\{L^2G^2\ind(L^2G^2>B)\},
\]
and the empirical average of \(R_N^{(B)}\) has conditional expectation bounded by the same quantity.  Letting first \(N\to\infty\), then \(B\to\infty\), and lastly \(\eta\downarrow0\) proves both assertions.
\end{proof}

\noindent
The variance in Theorem~\ref{thm:cost-clt} depends on the population smoothed dual potentials and is not directly observable.  In practice these potentials are already computed, or approximated, when evaluating the optimal transport statistic.  The estimator below uses one part of the data to learn the potentials and an independent part to estimate their variances.  This sample split avoids an additional empirical-process argument for reusing the same observations in both steps; additive constants in the potentials do not matter because only sample variances are used.  Cross-fitting could be used to reduce the loss of data in applications, but the single-split version is sufficient for consistency.

\begin{proposition}\label{prop:wald}
Under the assumptions of Theorem~\ref{thm:cost-clt}, the asymptotic variance of the cost statistic under the scaling \(\sqrt{mn/(m+n)}\) is
\[
\tau_{\sigma,p}^2=(1-\lambda)\operatorname{Var}_{\mu}\{\bar h_\sigma(X)\}
+\lambda\operatorname{Var}_{\nu}\{\bar k_\sigma(Y)\}.
\]
A consistent estimator is obtained by sample splitting.  Split the two samples into independent training and evaluation parts of sizes \(m_1,m_2\) and \(n_1,n_2\), where \(m_1,m_2,n_1,n_2\to\infty\), \(m=m_1+m_2\), and \(n=n_1+n_2\).  Let \((\widehat h^{\rm tr},\widehat k^{\rm tr})\) be the convolved optimal potentials computed from the training empirical pair \((\mu_{m_1}^{\rm tr},\nu_{n_1}^{\rm tr})\), with any additive normalization.  On the evaluation samples \(\widetilde X_1,\ldots,\widetilde X_{m_2}\) and \(\widetilde Y_1,\ldots,\widetilde Y_{n_2}\), define
\[
\widehat V_\mu^{\rm ho}
=m_2^{-1}\sum_{i=1}^{m_2}
\left\{\widehat h^{\rm tr}(\widetilde X_i)
-m_2^{-1}\sum_{\ell=1}^{m_2}\widehat h^{\rm tr}(\widetilde X_\ell)\right\}^2,
\]
\[
\widehat V_\nu^{\rm ho}
=n_2^{-1}\sum_{j=1}^{n_2}
\left\{\widehat k^{\rm tr}(\widetilde Y_j)
-n_2^{-1}\sum_{\ell=1}^{n_2}\widehat k^{\rm tr}(\widetilde Y_\ell)\right\}^2,
\]
and
\[
\widehat\tau_{\rm ho}^2
=\frac{n}{m+n}\widehat V_\mu^{\rm ho}
+\frac{m}{m+n}\widehat V_\nu^{\rm ho}.
\]
Write \(\widehat\tau_{\rm ho}\) for its nonnegative square root.
Then
\[
\widehat\tau_{\rm ho}^2\to_{\PP}\tau_{\sigma,p}^2 .
\]
If \(\tau_{\sigma,p}^2>0\), an asymptotic \((1-\alpha)\)-level Wald interval for \(T_p^{(\sigma)}(\mu,\nu)\) is
\[
T_p^{(\sigma)}(\mu_m,\nu_n)
\pm z_{1-\alpha/2}\sqrt{\frac{m+n}{mn}}\,\widehat\tau_{\rm ho}.
\]
Under the same nondegeneracy condition, and with \(\widehat D_{m,n}=W_p(\mu_m^\sigma,\nu_n^\sigma)\), an asymptotic Wald interval for \(\Delta_\sigma=W_p(\mu^\sigma,\nu^\sigma)\) at a separated alternative is
\[
\widehat D_{m,n}
\pm z_{1-\alpha/2}\sqrt{\frac{m+n}{mn}}\,
\frac{\widehat\tau_{\rm ho}}{p\widehat D_{m,n}^{p-1}}.
\]
\end{proposition}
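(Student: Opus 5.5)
The plan is to combine Lemma~\ref{lem:l2-potential-stability}, applied to the training samples, with Lemma~\ref{lem:holdout-lln}, applied to the evaluation samples conditionally on the training data, and then to finish with Slutsky's lemma and the delta method.

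\textbf{Step 1 (normalization).} Sample variances are invariant under adding constants to \(\widehat h^{\rm tr}\) and \(\widehat k^{\rm tr}\). We may therefore replace the training pair by its shift with \(\mu\widehat h^{\rm tr}=0\), as in Lemma~\ref{lem:l2-potential-stability}. That lemma applies with \((m,n)\) replaced by \((m_1,n_1)\to\infty\); it does not require a proportional regime. It gives \(\widehat h^{\rm tr}\to h_\sigma\) in \(L_2(\mu)\) and \(\widehat k^{\rm tr}\to k_\sigma\) in \(L_2(\nu)\), in (outer) probability.

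\textbf{Step 2 (envelope localization).} We check the envelope hypothesis of Lemma~\ref{lem:holdout-lln}. As in the proof of Proposition~\ref{prop:finite-moment-expansion}, Lemmas~\ref{lem:growth} and~\ref{lem:gaussian-smooth} give \(|\widehat h^{\rm tr}(x)|\le L_{m_1,n_1}(1+|x|^p)\). Here \(L_{m_1,n_1}\) is controlled by \(1+M_p(\mu_{m_1})+M_p(\nu_{n_1})\) together with the normalizing shift. That shift is bounded by integrating the envelope against \(\mu^\sigma\). By tightness of the empirical \(p\)-moments, for every \(\eta\) there is \(L\) with \(\PP\{|\widehat h^{\rm tr}|\le LG\}\ge1-\eta\), where \(G(x)=1+|x|^p\). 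Since \(q_\mu>2p\), we have \(G^2\in L_1(\mu)\). The same holds on the \(\nu\)-side.

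\textbf{Step 3 (evaluation averages).} The evaluation samples are independent of the training pair. Lemma~\ref{lem:holdout-lln} therefore gives
\[
m_2^{-1}\sum_i\widehat h^{\rm tr}(\widetilde X_i)-\mu\widehat h^{\rm tr}\to_{\PP}0,
\qquad
m_2^{-1}\sum_i\widehat h^{\rm tr}(\widetilde X_i)^2-\mu(\widehat h^{\rm tr})^2\to_{\PP}0.
\]
By Step 1, \(\mu\widehat h^{\rm tr}\to\mu h_\sigma\). Also \(\mu(\widehat h^{\rm tr})^2\to\mu h_\sigma^2\), because \(L_2\)-convergence implies convergence of the norms. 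Hence \(\widehat V_\mu^{\rm ho}\), which equals the empirical second moment minus the squared empirical mean, converges in probability to \(\operatorname{Var}_\mu h_\sigma=\operatorname{Var}_\mu\bar h_\sigma\). Likewise \(\widehat V_\nu^{\rm ho}\to_{\PP}\operatorname{Var}_\nu\bar k_\sigma\), since \(\bar k_\sigma\) differs from \(k_\sigma\) only by a constant. Finally \(n/(m+n)\to1-\lambda\) and \(m/(m+n)\to\lambda\), so \(\widehat\tau^2_{\rm ho}\to_{\PP}\tau^2_{\sigma,p}\).

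\textbf{Step 4 (intervals).} If \(\tau_{\sigma,p}^2>0\), then \(\widehat\tau_{\rm ho}\to_{\PP}\tau_{\sigma,p}\) by continuity of the square root. Theorem~\ref{thm:cost-clt} and Slutsky's lemma show that the studentized statistic is asymptotically \(\normaldist(0,1)\), which gives coverage \(1-\alpha\).

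For the distance interval, we combine three facts:
\begin{itemize}
\item Corollary~\ref{cor:distance-clt};
\item \(\widehat D_{m,n}\to_{\PP}\Delta_\sigma>0\), from Corollary~\ref{cor:separated-distance};
\item continuity of \(t\mapsto t^{p-1}\).
\end{itemize}
These give \(\widehat\tau_{\rm ho}/(p\widehat D_{m,n}^{p-1})\to_{\PP}\tau_{\sigma,p}/(p\Delta_\sigma^{p-1})\), and Slutsky's lemma again yields the stated coverage.

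\textbf{Main obstacle.} The main difficulty is Step 2 together with the measurability and conditioning issue in Step 3. Lemma~\ref{lem:holdout-lln} needs an envelope event that is determined by the training data alone. I would define that event through \(M_p(\mu_{m_1})+M_p(\nu_{n_1})\le M\), which is measurable and depends only on the training half. On this event the conditional variance bounds hold uniformly. Any non-measurability of the selected potentials is handled, as elsewhere in the paper, by reading the statements in outer probability.
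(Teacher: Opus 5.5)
Your proposal is correct and follows the paper's proof essentially step for step. Both arguments normalize the training potentials by shift invariance, apply Lemma~\ref{lem:l2-potential-stability} to the training half, localize the order-\(p\) envelope via tightness of the empirical \(p\)-moments, use Lemma~\ref{lem:holdout-lln} conditionally on the training data, and finish with Slutsky's lemma and the delta-method factor \((p\widehat D_{m,n}^{p-1})^{-1}\). Your choice to define the envelope event through the training moments alone is a slightly more explicit handling of the conditioning than the paper gives, but it does not change the argument.
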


\begin{proof}
The variance formula is the variance of the Gaussian limit in Theorem~\ref{thm:cost-clt}.  We next justify the displayed consistency of the estimator based on sample splitting.  Since sample variances are invariant under additive shifts, we may analyze the training potentials after imposing the normalization used in Lemma~\ref{lem:l2-potential-stability}. Applied to the training samples, that lemma yields
\[
\|\widehat h^{\rm tr}-h_\sigma\|_{L_2(\mu)}
+\|\widehat k^{\rm tr}-k_\sigma\|_{L_2(\nu)}
\to_{\PP}0 .
\]
The polynomial envelopes are of order \(p\), and the condition \(q_\mu,q_\nu>d+2p\) entails finite \(2p\)-moments.  Hence the Cauchy--Schwarz inequality gives
\[
\mu[(\widehat h^{\rm tr})^2]\to_{\PP}\mu h_\sigma^2,\qquad
\mu\widehat h^{\rm tr}\to_{\PP}\mu h_\sigma,
\]
and similarly on the \(\nu\)-side.  The population variances of the training potentials converge in probability to
\(\operatorname{Var}_{\mu}\{\bar h_\sigma(X)\}\) and
\(\operatorname{Var}_{\nu}\{\bar k_\sigma(Y)\}\).

It remains to replace these population variances by variances computed from the evaluation samples.  Conditional on the training samples, the evaluation observations are independent.  On events whose probabilities tend to one, the training potentials have deterministic envelopes \(L(1+|x|^p)\) and \(L(1+|y|^p)\), respectively. The squared envelopes are integrable under \(\mu\) and \(\nu\) since \(q_\mu,q_\nu>2p\).  Lemma~\ref{lem:holdout-lln} gives
\[
m_2^{-1}\sum_{i=1}^{m_2}\widehat h^{\rm tr}(\widetilde X_i)-\mu\widehat h^{\rm tr}\to_{\PP}0,
\quad
m_2^{-1}\sum_{i=1}^{m_2}\widehat h^{\rm tr}(\widetilde X_i)^2-\mu[(\widehat h^{\rm tr})^2]\to_{\PP}0,
\]
and the analogous two convergences for \(\widehat k^{\rm tr}\) on the \(\nu\)-evaluation sample.  These four convergences yield
\[
\widehat V_\mu^{\rm ho}\to_{\PP}\operatorname{Var}_{\mu}\{\bar h_\sigma(X)\},
\qquad
\widehat V_\nu^{\rm ho}\to_{\PP}\operatorname{Var}_{\nu}\{\bar k_\sigma(Y)\}.
\]
Since \(m/(m+n)\to\lambda\), we have
\(\widehat\tau_{\rm ho}^2\to_{\PP}\tau_{\sigma,p}^2\).

Slutsky's theorem gives the cost interval.  The distance interval follows by combining Slutsky's theorem with the factor \((p\Delta_\sigma^{p-1})^{-1}\) from the delta method and replacing \(\Delta_\sigma\) by the consistent estimator \(\widehat D_{m,n}\).
\end{proof}

\begin{remark}[Null case]\label{rem:null-limit}
At the null \(\mu=\nu\), the separated linear expansion above degenerates.  The relevant limit with fixed smoothing is generally a non-Gaussian functional of the full smoothed empirical process, as described in \citep{GoldfeldKatoNietertRioux2024} under stronger assumptions on the smooth empirical process.  The present separated CLT does not provide a null limit theorem.
\end{remark}

\section{Extensions and limitations}
\label{sec:extensions}

The preceding sections focus on Gaussian smoothing and the power cost \(|x-y|^p\).  This is the setting in which both parts of the paper are available: the rate proof uses derivative bounds for smoothed potentials, while the CLT also uses injectivity of Gaussian convolution, full support of the smoothed laws, and uniqueness and stability of potentials.  The rate argument is less rigid than the CLT argument.  The following two propositions isolate the assumptions that are actually used for rates.  They are meant as transfer principles rather than as a separate theory of general costs.

\begin{proposition}[Fixed smoothing kernels]\label{prop:fixed-kernel-extension}
Let \(K_\sigma\) be a fixed probability law on \(\R^d\) with density \(\kappa_\sigma\).  Suppose that, for some integer \(s>d/2\),
\[
\int_{\R^d}(1+|z|^p)|D^\alpha \kappa_\sigma(z)|\,dz<\infty,
\qquad |\alpha|\le s .
\]
Define
\[
T_{p,K}^{(\sigma)}(\mu,\nu)=W_p(\mu*K_\sigma,\nu*K_\sigma)^p .
\]
Then the rate and expectation conclusions of Theorems~\ref{thm:main-prob} and~\ref{thm:mean} continue to hold for \(T_{p,K}^{(\sigma)}\), with constants depending also on \(K_\sigma\).  In particular, if \(M_{q_\mu}(\mu)<\infty\), \(M_{q_\nu}(\nu)<\infty\), and \(q_\mu,q_\nu>p\), then
\[
\bigl|T_{p,K}^{(\sigma)}(\mu_m,\nu_n)-T_{p,K}^{(\sigma)}(\mu,\nu)\bigr|
=O_{\PP}\{\rho_{q_\mu,p,d}(m)+\rho_{q_\nu,p,d}(n)\}.
\]
If \(q_\mu,q_\nu\ge2p\), the corresponding expectation bound also holds.  If \(W_p(\mu*K_\sigma,\nu*K_\sigma)>0\), this order transfers to the smoothed distance \(W_p(\mu_m*K_\sigma,\nu_n*K_\sigma)\).
\end{proposition}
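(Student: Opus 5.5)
The plan is to rerun the proofs of Theorems~\ref{thm:main-prob} and~\ref{thm:mean} verbatim, with the Gaussian kernel replaced by \(K_\sigma\). The only Gaussian-specific inputs are three:
\begin{enumerate}[label=(\roman*)]
\item the moment comparison \(M_p(\mu^\sigma)\le C\{1+M_p(\mu)\}\);
\item the derivative bound of Lemma~\ref{lem:gaussian-smooth};
\item the identity \(\int\phi\,d(\mu_m*K_\sigma)=\int(\phi*\check\kappa_\sigma)\,d\mu_m\), where \(\check\kappa_\sigma(z)=\kappa_\sigma(-z)\), used in Lemma~\ref{lem:twosample-dual}.
\end{enumerate}
Item (iii) is Fubini once the potentials have \(p\)-growth, so the work is to check (i) and (ii) under the stated integrability hypothesis. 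Lemmas~\ref{lem:twosample-dual}, \ref{lem:growth} and~\ref{lem:shell-bound} do not involve the kernel at all.

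For (i), take \(\alpha=0\) in the hypothesis. This gives \(\int|z|^p\kappa_\sigma(z)\,dz<\infty\). Coupling \(X_i\) with an independent \(Z\sim K_\sigma\) and using \(|x+z|^p\le C_p(|x|^p+|z|^p)\) then yields \(M_p(\mu_m*K_\sigma)\le C_K\{1+M_p(\mu_m)\}\), and the same bound for the population laws.

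For (ii), let \(f\) satisfy \(|f(y)|\le L(1+|y|^p)\). Write \(D^\alpha(f*\check\kappa_\sigma)(x)=\pm\int f(x-z)D^\alpha\kappa_\sigma(-z)\,dz\), moving the derivative onto the kernel. This is justified by dominated convergence, which requires \(D^\alpha\kappa_\sigma\) to be integrable against \(1+|z|^p\); that is exactly the hypothesis. From \(|f(x-z)|\le C_pL(1+|x|^p+|z|^p)\) we get
\[
|D^\alpha(f*\check\kappa_\sigma)(x)|\le C_pL(1+|x|^p)\int(1+|z|^p)|D^\alpha\kappa_\sigma(z)|\,dz,
\]
which is the analogue of Lemma~\ref{lem:gaussian-smooth}, valid for \(|\alpha|\le s\).

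Lemma~\ref{lem:shell-bound} was stated with \(s=\lfloor d/2\rfloor+1\), but its proof uses only \(d/s<2\). So we run it with the given integer \(s>d/2\), or simply note that the shell class of order \(s\) has entropy exponent \(d/s<2\). The shell classes \(\cH_L\) built from \(C^s\) bounds are then again the right deterministic containers.

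With (i) and (ii) in place, the rest of the argument goes through unchanged.
\begin{itemize}
\item In probability: localize on \(E_M\), place all convolved population and empirical potentials in \(\cH_{L_M}\), apply the first bound of Lemma~\ref{lem:shell-bound}, and finish with Markov's inequality.
\item In expectation, for \(q\ge2p\): use the random radius \(L_{m,n}\), Cauchy--Schwarz, and the second-moment bound of Lemma~\ref{lem:shell-bound}.
\item For the distance: when \(W_p(\mu*K_\sigma,\nu*K_\sigma)>0\), the proofs of Corollaries~\ref{cor:separated-distance} and~\ref{cor:mean-separated-distance} use only positivity of the population value and the local Lipschitz property of \(t\mapsto t^{1/p}\), so they transfer directly. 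The injectivity remark should be dropped, since it need not hold for general kernels.
\end{itemize}

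The main obstacle is justifying the differentiation under the integral and the \(C^s\) regularity of \(f*\check\kappa_\sigma\) for merely measurable \(f\) of \(p\)-growth. I would handle it by first proving continuity of the convolution: the \(L_1\)-continuity of translations of \(D^\beta\kappa_\sigma\) holds in the weighted space \(L_1((1+|z|^p)dz)\), and this follows from the hypothesis via density of \(C_c\) functions in that space. The partial derivatives are then identified inductively through difference quotients.
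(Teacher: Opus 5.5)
Your proposal is correct and follows the paper's own proof. Like the paper, you note that only the smoothing estimate changes, derive the analogue of Lemma~\ref{lem:gaussian-smooth} from the weighted integrability of \(D^\alpha\kappa_\sigma\), rerun the localization, shell-bracketing and Cauchy--Schwarz arguments, and transfer the rate to the distance by the local Lipschitz property of \(t\mapsto t^{1/p}\). Your version is more careful than the paper's in three places: it states the moment comparison for \(\mu_m*K_\sigma\) explicitly, it uses the reflected kernel \(\check\kappa_\sigma\) needed when \(K_\sigma\) is not symmetric, and it justifies differentiating under the integral via continuity of translations in \(L_1((1+|z|^p)\,dz)\).
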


\begin{proof}
Only the smoothing estimate changes.  Lemma~\ref{lem:twosample-dual} and Lemma~\ref{lem:growth} are unchanged for the power cost, and the derivative condition gives the analogue of Lemma~\ref{lem:gaussian-smooth} up to order \(s>d/2\): if \(|f(y)|\le L(1+|y|^p)\), then
\[
\max_{|\alpha|\le s}|D^\alpha(f*\kappa_\sigma)(x)|
\le C L(1+|x|^p).
\]
The convolved dual potentials are contained in the shell classes \(\cH_L\), with constants depending on \(K_\sigma\).  The shell bracketing bound, localization by empirical \(p\)-moments, and the expectation argument from Theorems~\ref{thm:main-prob} and~\ref{thm:mean} then apply.  The assertion for separated distances follows from the local Lipschitz argument used in Corollaries~\ref{cor:separated-distance} and~\ref{cor:mean-separated-distance}.
\end{proof}

The separation condition in Proposition~\ref{prop:fixed-kernel-extension} should not be replaced by \(\mu\ne\nu\) for an arbitrary kernel.  For the Gaussian kernel this replacement is valid since the Gaussian characteristic function has no zeros, so convolution by \(\gamma_\sigma\) is injective.  A general kernel may identify distinct laws after convolution, and testing interpretations based on the smoothed distance must account for this possible loss of injectivity.

The next statement gives the corresponding observation for the cost.  The essential requirement is not the exact power form, but the availability of order-\(p\) envelopes for optimal dual potentials after normalization.  This condition is substantive and must be verified for a chosen loss; the proposition records what follows once it is available.

\begin{proposition}[Translation costs with polynomial growth]\label{prop:loss-extension}
Let \(c(x,y)=\ell(x-y)\), where \(\ell:\R^d\to[0,\infty)\) is lower semicontinuous, convex, and satisfies \(\ell(z)\le C(1+|z|^p)\).  Let \(K_\sigma\) satisfy the derivative condition in Proposition~\ref{prop:fixed-kernel-extension}.  Assume, in addition, that for all population and empirical smoothed pairs considered below, the associated optimal dual potentials admit representatives with a common order-\(p\) polynomial envelope of the form
\[
|\varphi(x)|\le C(1+|x|^p),\qquad
|\psi(y)|\le C(1+|y|^p)
\]
after the usual normalization, with constants controlled by the relevant \(p\)-moments.  Then the two-sample finite-moment rate and the expectation bound remain valid for the smoothed optimal transport cost
\[
T_{\ell,K}^{(\sigma)}(\mu,\nu)=\inf_{\pi\in\Pi(\mu*K_\sigma,\nu*K_\sigma)}
\int_{\R^d\times\R^d}\ell(x-y)\,\pi(dx,dy).
\]
The rate is again \(\rho_{q_\mu,p,d}(m)+\rho_{q_\nu,p,d}(n)\) in probability for \(q_\mu,q_\nu>p\), and in expectation under the corresponding square-integrability condition.
\end{proposition}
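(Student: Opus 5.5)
The plan is to rerun the proof of Theorems~\ref{thm:main-prob} and~\ref{thm:mean}, changing only the two places where the power cost entered: the envelope of the optimal potentials and the dual reduction. First I would check that Lemma~\ref{lem:twosample-dual} holds for \(c(x,y)=\ell(x-y)\). Its proof uses only two facts: an optimal dual pair for one marginal pair is feasible for any other pair, and the cost equals the dual value. Both hold because \(\ell\) is lower semicontinuous, nonnegative and finite, and \(\ell(x-y)\le C(1+|x|^p+|y|^p)\) gives the domination \(c\le a(x)+b(y)\) with \(a\in L_1(\mu*K_\sigma)\) and \(b\in L_1(\nu*K_\sigma)\) whenever the smoothed laws have finite \(p\)-moments. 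Dominated duality \citep[Theorem~5.10]{Villani2009} then gives attainment and no duality gap. Moving to the sample side via \(\int\varphi\,d(\mu_m*K_\sigma)=\int(\varphi*\kappa_\sigma)\,d\mu_m\) (with the reflected kernel if \(\kappa_\sigma\) is not symmetric), I obtain
\[
|T_{\ell,K}^{(\sigma)}(\mu_m,\nu_n)-T_{\ell,K}^{(\sigma)}(\mu,\nu)|\le\sup|(\mu_m-\mu)(\varphi*\kappa_\sigma)|+\sup|(\nu_n-\nu)(\psi*\kappa_\sigma)|.
\]
The suprema run over the optimal pairs of the population and empirical smoothed problems.

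Second, the assumed common envelope replaces Lemma~\ref{lem:growth}. On the localization event \(E_M=\{M_p(\mu_m)+M_p(\nu_n)\le M\}\), the smoothed moments satisfy \(M_p(\mu_m*K_\sigma)\le C_K\{1+M_p(\mu_m)\}\). This uses \(\int|z|^p\kappa_\sigma(z)\,dz<\infty\), which is the case \(\alpha=0\) of the derivative condition. So all relevant potentials obey \(|\varphi|,|\psi|\le C_M(1+|\cdot|^p)\) with a deterministic \(C_M\). The kernel analogue of Lemma~\ref{lem:gaussian-smooth}, already noted in the proof of Proposition~\ref{prop:fixed-kernel-extension}, then places every convolved potential in a deterministic shell class \(\cH_{L_M}\) with \(s>d/2\). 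Lemma~\ref{lem:shell-bound} supplies the expectation bounds \(C_M\rho_{q_\mu,p,d}(m)\) and \(C_M\rho_{q_\nu,p,d}(n)\). Markov's inequality together with tightness of the empirical \(p\)-moments then gives the \(O_{\PP}\) statement, exactly as in Theorem~\ref{thm:main-prob}.

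For the expectation bound under \(q_\mu,q_\nu\ge2p\), I would copy the argument of Theorem~\ref{thm:mean}. The envelope constant, being controlled by the \(p\)-moments, is at most \(L_{m,n}=C\{1+M_p(\mu_m)+M_p(\nu_n)+M_p(\mu)+M_p(\nu)\}\). This quantity has bounded second moment, and the error is at most \(L_{m,n}(Z_m^\mu+Z_n^\nu)\) with unit shell classes. Cauchy--Schwarz combined with the second-moment part of Lemma~\ref{lem:shell-bound} finishes that part.

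The main obstacle is the phrase "constants controlled by the relevant \(p\)-moments": I must read it as a monotone, deterministic dependence, say \(C\le \Phi(M_p)\) with \(\Phi\) nondecreasing. That makes the constant bounded on \(E_M\); for the expectation statement I need it to be affine in the moments, or at least square integrable. I would state this reading explicitly, using the affine form in the expectation part, since the hypothesis is an assumption rather than something derived. Convexity of \(\ell\) is not otherwise needed for the rate argument.
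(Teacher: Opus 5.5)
Your proposal is correct and follows the paper's own proof. The assumed envelopes replace Lemma~\ref{lem:growth}, the kernel derivative condition gives the shell-class inclusion, and the dual reduction carries over to the cost \(\ell(x-y)\); Lemma~\ref{lem:shell-bound}, localization by empirical moments and the Cauchy--Schwarz step of Theorem~\ref{thm:mean} then finish. Your extra details make explicit what the paper leaves implicit: the dominated-duality check for \(\ell\), the reflected kernel when \(\kappa_\sigma\) is not symmetric, and the need for the envelope constant to depend affinely (or at least square-integrably) on the \(p\)-moments in the expectation part.
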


\begin{proof}
The assumed polynomial envelopes replace Lemma~\ref{lem:growth}, and the derivative condition on \(K_\sigma\) gives the shell-class inclusion used in Proposition~\ref{prop:fixed-kernel-extension}.  The dual reduction continues to apply to the cost \(c\), since optimal feasible pairs enter linearly in the two marginals.  Once the two empirical processes are bounded over deterministic shell classes, Lemma~\ref{lem:shell-bound}, localization by empirical moments, and the Cauchy--Schwarz argument used in Theorem~\ref{thm:mean} give the stated bounds.
\end{proof}

The restriction of Proposition~\ref{prop:loss-extension} to rate bounds is deliberate.  The separated central limit theorem is more sensitive to the geometry of the cost and to the structure of optimal potentials.  For the Gaussian kernel and the strictly convex power cost with \(p>1\), uniqueness of normalized potentials and their \(L_2\)-stability yield the scalar expansion in Section~\ref{sec:sep}.  For non-strictly convex losses, or for \(p=1\), one should generally expect directional or set-valued limits rather than a single normal limit.  For other kernels or costs, an analogous CLT would require, at a minimum, uniqueness or an appropriate differentiability theory for the normalized potentials, a summable bracketing integral for the localized smoothed dual class, and stability of the empirical potentials in \(L_2\).

The assumption of a fixed bandwidth is also substantive.  All constants in the derivative bounds depend on the smoothing scale.  If \(\sigma=\sigma_N\to0\), the statistical error has to be balanced against the smoothing bias \(T_p^{(\sigma_N)}(\mu,\nu)-W_p(\mu,\nu)^p\), and the derivative constants typically deteriorate as \(\sigma_N\downarrow0\).  That regime requires a separate bias-variance analysis.  Similarly, at the null the relevant derivative is nonlinear and depends on the full smooth empirical process; bootstrap consistency and inference for heavy tails below the Donsker threshold require additional arguments beyond the separated theory under finite moment assumptions developed here.

\section{Conclusion}
Gaussian smoothing permits a two-sample empirical optimal transport analysis under finite polynomial moments.  The main rate result is obtained at the level of the dual cost, where the two empirical processes enter linearly.  This gives rates in probability for \(q_\mu,q_\nu>p\), expectation bounds under square-integrable envelopes, and distance rates whenever the smoothed population distance is separated from zero.

When the moment order is above the shell threshold, this dual viewpoint also supports fixed-alternative inference.  Uniqueness and \(L_2\)-stability of the smoothed potentials turn the random dual optimizer into a fixed influence function, leading to a two-sample CLT and a consistent sample-splitting variance estimator.  The results should be read in this fixed-bandwidth, separated-alternative regime.  Null inference, \(p=1\), non-strictly convex costs, and vanishing smoothing require different differentiability and bias-variance arguments.

\section*{Data availability statement}

No datasets were generated or analyzed in this theoretical study.


\begin{thebibliography}{99}


\bibitem[Bobkov and Ledoux(2019)]{BobkovLedoux2019}
Bobkov, S., and Ledoux, M. (2019).
\newblock \emph{One-Dimensional Empirical Measures, Order Statistics, and Kantorovich Transport Distances}.
\newblock Memoirs of the American Mathematical Society, 261(1259).

\bibitem[Cosso et~al.(2025)]{CossoMartiniPerelli2025}
Cosso, A., Martini, M., and Perelli, L. (2025).
\newblock Mean convergence rates for Gaussian-smoothed Wasserstein distances and classical Wasserstein distances.
\newblock arXiv:2504.17477.

\bibitem[del Barrio et~al.(2024)]{delBarrioGonzalezSanzLoubes2024}
del Barrio, E., Gonz\'alez-Sanz, A., and Loubes, J.-M. (2024).
\newblock Central limit theorems for general transportation costs.
\newblock \emph{Annales de l'Institut Henri Poincar\'e, Probabilit\'es et Statistiques}, 60(2), 847--873.
\newblock doi:10.1214/22-AIHP1356.

\bibitem[Ding and Niles-Weed(2022)]{DingNilesWeed2022}
Ding, Y., and Niles-Weed, J. (2022).
\newblock Asymptotics of smoothed Wasserstein distances in the small noise regime.
\newblock In \emph{Advances in Neural Information Processing Systems}.

\bibitem[Fang and Santos(2019)]{FangSantos2019}
Fang, Z., and Santos, A. (2019).
\newblock Inference on directionally differentiable functions.
\newblock \emph{The Review of Economic Studies}, 86(1), 377--412.

\bibitem[Fournier and Guillin(2015)]{FournierGuillin2015}
Fournier, N., and Guillin, A. (2015).
\newblock On the rate of convergence in Wasserstein distance of the empirical measure.
\newblock \emph{Probability Theory and Related Fields}, 162, 707--738.

\bibitem[Goldfeld and Greenewald(2020)]{GoldfeldGreenewald2020}
Goldfeld, Z., and Greenewald, K. (2020).
\newblock Gaussian-smoothed optimal transport: metric structure and statistical efficiency.
\newblock In \emph{Proceedings of the Twenty Third International Conference on Artificial Intelligence and Statistics}, PMLR 108, 3327--3337.

\bibitem[Goldfeld et~al.(2020)]{GoldfeldGreenewaldNilesWeedPolyanskiy2020}
Goldfeld, Z., Greenewald, K., Niles-Weed, J., and Polyanskiy, Y. (2020).
\newblock Convergence of smoothed empirical measures with applications to entropy estimation.
\newblock \emph{IEEE Transactions on Information Theory}, 66(7), 4368--4391.

\bibitem[Goldfeld et~al.(2024a)]{GoldfeldKatoNietertRioux2024}
Goldfeld, Z., Kato, K., Nietert, S., and Rioux, G. (2024a).
\newblock Limit distribution theory for smooth \(p\)-Wasserstein distances.
\newblock \emph{The Annals of Applied Probability}, 34(2), 2447--2511.

\bibitem[Goldfeld et~al.(2024b)]{GoldfeldKatoRiouxSadhu2024}
Goldfeld, Z., Kato, K., Rioux, G., and Sadhu, R. (2024b).
\newblock Statistical inference with regularized optimal transport.
\newblock \emph{Information and Inference: A Journal of the IMA}, 13(1), iaad056.

\bibitem[Manole and Niles-Weed(2024)]{ManoleNilesWeed2024}
Manole, T., and Niles-Weed, J. (2024).
\newblock Sharp convergence rates for empirical optimal transport with smooth costs.
\newblock \emph{The Annals of Applied Probability}, 34(1B), 1108--1135.

\bibitem[Nietert et~al.(2021)]{NietertGoldfeldKato2021}
Nietert, S., Goldfeld, Z., and Kato, K. (2021).
\newblock Smooth \(p\)-Wasserstein distance: structure, empirical approximation, and statistical applications.
\newblock In \emph{Proceedings of the 38th International Conference on Machine Learning}, PMLR 139, 8172--8183.

\bibitem[Sadhu et~al.(2021)]{SadhuGoldfeldKato2021}
Sadhu, R., Goldfeld, Z., and Kato, K. (2021).
\newblock Limit distribution theory for the smooth 1-Wasserstein distance with applications.
\newblock arXiv:2107.13494.

\bibitem[Santambrogio(2015)]{Santambrogio2015}
Santambrogio, F. (2015).
\newblock \emph{Optimal Transport for Applied Mathematicians}.
\newblock Birkh\"auser.

\bibitem[Sommerfeld and Munk(2018)]{SommerfeldMunk2018}
Sommerfeld, M., and Munk, A. (2018).
\newblock Inference for empirical Wasserstein distances on finite spaces.
\newblock \emph{Journal of the Royal Statistical Society: Series B}, 80(1), 219--238.

\bibitem[Staudt et~al.(2025)]{StaudtHundrieserMunk2025}
Staudt, T., Hundrieser, S., and Munk, A. (2025).
\newblock On the uniqueness of Kantorovich potentials.
\newblock \emph{SIAM Journal on Mathematical Analysis}, 57(2), 1452--1482.
\newblock doi:10.1137/24M1658966.

\bibitem[van der Vaart and Wellner(1996)]{vdVW1996}
van der Vaart, A. W., and Wellner, J. A. (1996).
\newblock \emph{Weak Convergence and Empirical Processes}.
\newblock Springer.

\bibitem[Villani(2009)]{Villani2009}
Villani, C. (2009).
\newblock \emph{Optimal Transport: Old and New}.
\newblock Springer.

\bibitem[Weed and Bach(2019)]{WeedBach2019}
Weed, J., and Bach, F. (2019).
\newblock Sharp asymptotic and finite-sample rates of convergence of empirical measures in Wasserstein distance.
\newblock \emph{Bernoulli}, 25(4A), 2620--2648.

\bibitem[Zhang et~al.(2021)]{ZhangChengReeves2021}
Zhang, Y., Cheng, X., and Reeves, G. (2021).
\newblock Convergence of Gaussian-smoothed optimal transport distance with sub-gamma distributions and dependent samples.
\newblock In \emph{Proceedings of the Twenty Fourth International Conference on Artificial Intelligence and Statistics}, PMLR 130, 2422--2430.

\end{thebibliography}
\end{document}